\def\draft{y}
\newtheorem{theorem}{Theorem}[section]
\theoremstyle{definition}
\newtheorem{proposition}[theorem]{Proposition}
\newtheorem{lemma}[theorem]{Lemma}
\newtheorem{definition}[theorem]{Definition}
\newtheorem{remark}[theorem]{Remark}
\newtheorem{corollary}[theorem]{Corollary}
\newtheorem{convention}[theorem]{Convention}
\numberwithin{equation}{section}
\def\printname#1{
        \if\draft y
                \smash{\makebox[0pt]{\hspace{-0.5in}
                        \raisebox{8pt}{\tt\tiny #1}}}
        \fi
}
\newlength{\standardunitlength}
\long\def\@makecaption#1#2{%
     \vskip 10pt

\setbox\@tempboxa\hbox{
       \small\sf{\bfcaptionfont #1. }\ignorespaces #2}%
     \ifdim \wd\@tempboxa >\captionwidth {%
         \rightskip=\@captionmargin\leftskip=\@captionmargin
         \unhbox\@tempboxa\par}%
       \else
         \hbox to\hsize{\hfil\box\@tempboxa\hfil}%
     \fi}
\font\bfcaptionfont=cmssbx10 scaled \magstephalf
\newdimen\@captionmargin\@captionmargin=2\parindent
\newdimen\captionwidth\captionwidth=\hsize
\def\lbl#1{\label{#1}\printname{#1}}
\def\cxymatrix#1{\xy*[c]\xybox{\xymatrix#1}\endxy}
\def\BZ{\mathbb Z}
\def\BC{\mathbb C}
\def\a{\alpha}
\def\t{\tau}
\def\e{\epsilon}
\def\b{\beta}
\def\longto{\longrightarrow}
\def\SL{\mathrm{SL}}
\def\PSL{\mathrm{PSL}}
\def\PGL{\mathrm{PGL}}
\def\pt{\partial}
\def\ep{\epsilon}
\DeclareMathOperator{\id}{id}
\DeclareMathOperator{\Ext}{Ext}
\DeclareMathOperator{\rank}{rank}
\DeclareMathOperator{\ori}{ori}
\DeclareMathOperator{\rest}{rest}
\DeclareMathOperator{\cusp}{cusp}
\def\be{  \begin{equation} }
\def\ee{  \end{equation} }
\def\A{\mathcal A}
\def\Hom{\mathrm{Hom}}
\def\diag{\text{diag}}
\def\CT{\mathcal T}
\def\R{\mathbb{R}}
\def\Z{\mathbb{Z}}
\def\C{\mathbb{C}}
\def\T{\mathcal{T}}
\def\J{\mathcal{J}}
\def\Ker{\mathrm{Ker}}
\def\Im{\mathrm{Im}}
\def\fg{\mathfrak{g}}
\begin{document}

\begin{abstract}
In \cite{GaroufalidisGoernerZickert} we studied 
$\mathrm{PGL}(n,\C)$-representations of a 3-manifold via a generalization of 
Thurston's gluing equations. Neumann has proved some symplectic properties 
of Thurston's gluing equations that play an important role in recent 
developments of exact and perturbative Chern-Simons theory. In this paper, 
we prove the symplectic properties of the $\mathrm{PGL}(n,\C)$-gluing equations 
for all ideal triangulations of compact oriented 3-manifolds.
\end{abstract}


\title[The symplectic properties of the $\PGL(n,\BC)$-gluing equations]{
The symplectic properties of the $\PGL(n,\BC)$-gluing equations}
\author{Stavros Garoufalidis}
\address{School of Mathematics \\
         Georgia Institute of Technology \\
         Atlanta, GA 30332-0160, USA \newline
         {\tt \url{http://www.math.gatech.edu/~stavros }}}
\email{stavros@math.gatech.edu}
\author{Christian K. Zickert}
\address{University of Maryland \\
         Department of Mathematics \\
         College Park, MD 20742-4015, USA \newline
         {\tt \url{http://www2.math.umd.edu/~zickert}}}
\email{zickert@umd.edu}
\thanks{
The authors were supported in part by the National Science Foundation. \\
\newline
1991 {\em Mathematics Classification.} Primary 57N10. Secondary 57M27.
\newline
{\em Key words and phrases: ideal triangulations,
generalized gluing equations, $\PGL(n,\BC)$-gluing equations,
shape coordinates, symplectic properties, Neumann-Zagier equations, quivers.
}
}

\date{October 8, 2013}

\maketitle

\tableofcontents


\section{Introduction}
\label{sec.ge}

Thurston's gluing equations are a system of polynomial equations that were 
introduced to concretely construct hyperbolic structures. They are defined 
for every compact, oriented $3$-manifold $M$ with arbitrary, posibly empty, 
boundary together with a topological ideal triangulation $\T$. The system has 
the form 

\begin{equation}
\label{eq:GluingEqIntro}
\prod_j z_j^{A_{ij}} \prod_j (1-z_j)^{B_{ij}}=\e_i,
\end{equation}
where $A$ and $B$ are matrices whose columns are parametrized by the 
simplices of $\T$ and $\e_i$ is a sign.
Each (non-degenerate) solution explicitly determines (up to conjugation) a 
representation of $\pi_1(M)$ in $\PGL(2,\C)$.

The matrices $A$ and $B$ in~\eqref{eq:GluingEqIntro} have some remarkable 
symplectic properties that play a fundamental role in exact and perturbative 
Chern-Simons theory for 
$\SL(2,\C)$~\cite{DGLZ,Di1,DGG2,DG,Ga-3Dindex,GHRS,DGon}.

In~\cite{GaroufalidisGoernerZickert} Garoufalidis, Goerner and Zickert 
generalized Thurston's gluing equations to representations in $\PGL(n,\C)$, 
i.e.~they constructed a system of the form~\eqref{eq:GluingEqIntro} such that 
each solution determines a representation of $\pi_1(M)$ in $\PGL(n,\C)$.
The $\PGL(n,\C)$-gluing equations are expected to play a similar role in 
$\PGL(n,\C)$-Chern-Simons theory as Thurston's gluing equations play in 
$\PSL(2,\C)$-Chern-Simons theory.

In this paper we focus on the symplectic properties of the $\PGL(n,\C)$-gluing 
equations. This was initiated in~\cite{GaroufalidisGoernerZickert}, where we 
proved that the rows of $(A\vert B)$ are symplectically orthogonal.
The symplectic properties for $n=2$ play a key role in the definition of the
formal power series invariants of \cite{DG} (conjectured to be asymptotic
to all orders of the Kashaev invariant) and in the definition of the 3D-index
of Dimofte--Gaiotto--Gukov \cite{DGG2} whose convergence and topological
invariance was established in \cite{Ga-3Dindex} and \cite{GHRS}. Our results
fulfill a wish of the physics literature \cite{DGon}, and may be used for
an extension of the work \cite{DG,GHRS,Bonahon} to the setting of the 
$\PGL(n,\C)$-representations. 


\section{Preliminaries and statement of results}

\subsection{Triangulations}
Let $M$ be a compact, oriented $3$-manifold with (possibly empty) boundary, 
and let $\widehat M$ be the space obtained from $M$ by collapsing each 
boundary component to a point.

\begin{definition}
A (topologically ideal) \emph{triangulation} of $M$ is an identification of 
$M$ with a space obtained from a collection of simplices by gluing together 
pairs of faces via affine homeomorphisms. A \emph{concrete triangulation} is 
a triangulation together with an identification of each simplex of $M$ with a 
standard ordered $3$-simplex. A concrete triangulation is \emph{oriented} if 
for each simplex, the orientation induced by the identification with a 
standard simplex agrees with the orientation of $M$.
\end{definition}

\begin{remark}
Unless otherwise specified, a triangulation always refers to an 
\emph{oriented} triangulation. The census triangulations are concrete 
triangulations, which are oriented when $M$ is orientable. All of our 
results can be generalized to arbitrary concrete triangulations (e.g.~ordered 
triangulations) by introducing additional signs. We will not pursue this here.
\end{remark}

\subsection{Thurston's gluing equations}

We briefly review Thurston's gluing equations. For details, see 
Thurston~\cite{ThurstonNotes} or Neumann--Zagier~\cite{NeumannZagier}.
Let $z_j\in\C\setminus\{0,1\}$ be complex numbers, one for each simplex 
$\Delta_j$ of $\T$. Assign \emph{shape parameters} $z_j$, 
$z_j^{\prime}=\frac{1}{1-z_j}$ and $z_j^{\prime\prime}=1-\frac{1}{z}$ to the edges 
of $\Delta_j$ as in Figure~\ref{fig:ShapeParameters}.

\begin{figure}[htb]
\begin{center}
\begin{minipage}[b]{0.47\textwidth}
\begin{center}
\scalebox{0.8}{\input{ShapeParameters.tex}}
\end{center}
\end{minipage}
\hfill
\begin{minipage}[b]{0.47\textwidth}
\begin{center}
\scalebox{0.82}{\input{SL2Quiver.tex}}
\end{center}
\end{minipage}\\
\begin{minipage}[t]{0.47\textwidth}
\begin{center}
\caption{Shape parameters.}
\label{fig:ShapeParameters}
\end{center}
\end{minipage}
\hfill
\begin{minipage}[t]{0.47\textwidth}
\begin{center}
\caption{Quiver representation of $\Omega$.}
\label{fig:SL2Quiver}
\end{center}
\end{minipage}
\end{center}
\end{figure}

\subsubsection{Edge equations} 

We have a gluing equation for each $1$-cell $e$ of $\T$ defined by setting 
equal to $1$ the product of all shape parameters assigned to the edges of $e$. 
The gluing equation for $e$ can thus be written in the form
\begin{equation}
\label{eq:GluingEqForm}
\prod_{j}(z_j)^{A^\prime_{e,j}}\prod_j (z_j^{\prime})^{B^{\prime}_{e,j}}
\prod_j(z_j^{\prime\prime})^{C^\prime_{e,j}}=1,\quad \text{or}\quad 
\prod_{j}(z_j)^{A_{e,j}}\prod_j (1-z_j)^{B_{e,j}}=\varepsilon_e
\end{equation}
where $A=A^\prime-C^\prime$ and $B=C^\prime-B^\prime$ are the so-called gluing 
equation matrices. Each solution determines a representation 
$\pi_1(M)\to\PGL(2,\C)$.
Note that the rows of the gluing equation matrices are parametrized by 
$1$-cells, and the columns by the simplices of $\T$.

\subsubsection{Cusp equations} 

Each solution $z=\{z_j\}$ to the edge equations gives rise to a cohomology 
class $C(z)\in H^1(\partial M;\C^*)$ obtained by taking the product of the 
shape parameters along normal curves. This class vanishes if and only if the 
representation corresponding to $z$ is boundary-unipotent. The vanishing of 
$C(z)$ is equivalent to a system of equations 
\begin{equation}
\prod_{j}(z_j)^{A^{\cusp\prime}_{\alpha,j}}
\prod_j (z_j^{\prime})^{B^{\cusp\prime}_{\alpha,j}}
\prod_j(z_j^{\prime\prime})^{C^{\cusp\prime}_{\alpha,j}}=1,
\quad \text{or}\quad \prod_{j}(z_j)^{A^{\cusp}_{\alpha,j}}
\prod_j (1-z_j)^{B^{\cusp}_{\alpha,j}}=\varepsilon_\alpha
\end{equation}
of the form~\eqref{eq:GluingEqForm} with an equation for each generator 
$\alpha$ of $H_1(\partial M)$.
These equations are called \emph{cusp equations}.
Note that the rows of the cusp equation matrices are parametrized by 
generators $\alpha$ of $H_1(\partial M)$, and the columns by the simplices 
of $\T$.

\subsection{Neumann's chain complex}
\lbl{sub.Nchain}

For an ordered $3$-simplex $\Delta$, let $J_\Delta$ denote the free abelian 
group generated by the \emph{unoriented} edges of $\Delta$ subject to the 
relations
\begin{gather}
\varepsilon_{01}=\varepsilon_{23},\qquad \varepsilon_{12}=\varepsilon_{03},
\qquad \varepsilon_{02}=\varepsilon_{13}\label{eq:first}\\
\varepsilon_{01}+\varepsilon_{12}+\varepsilon_{02}=0.\label{eq:second}
\end{gather}
Here $\varepsilon_{ij}$ denotes the edge between vertices $i$ and $j$ of 
$\Delta$. Note that~\eqref{eq:first} states that two opposite edges are 
equal, and that~\eqref{eq:first} and~\eqref{eq:second} together imply that 
the sum of the edges incident to a vertex is $0$.

The space $J_\Delta$ is endowed with a non-degenerate skew symmetric bilinear 
form $\Omega$ defined uniquely by
\begin{align}
\Omega(\varepsilon_{01},\varepsilon_{12})&
=\Omega(\varepsilon_{12},\varepsilon_{02})=
\Omega(\varepsilon_{02},\varepsilon_{01})=1.
\end{align}
The form $\Omega$ may be represented by the quiver in 
Figure~\ref{fig:SL2Quiver}. Namely, each edge corresponds to a vertex of 
the quiver, and $\Omega(\varepsilon,\varepsilon^{\prime})=1$ if and only if 
there is a directed edge in the quiver going from $\varepsilon$ to 
$\varepsilon^{\prime}$. 

Neumann~\cite[Thm~4.1]{NeumannComb} encoded the symplectic 
properties of the gluing equations in terms of a chain complex
$\J=\J(\CT)$ (indexed such that $\J_5=\J_1=C_0(\T)$) 
\be
\label{eq:2chain}
\xymatrix{0 \ar[r] & C_0(\CT) \ar[r]^\alpha & C_1(\CT)\ar[r]^\beta
& J(\T)\ar[r]^{\beta^*} & C_1(\CT) \ar[r]^{\alpha^*} & C_0(\CT) \ar[r] &0}
\ee
defined combinatorially from the triangulation $\CT$. Here
\begin{itemize}
\item
$C_i(\CT)$ is the free $\BZ$-module of the unoriented $i$-simplices
of $\CT$.
\item
$J(\CT)=\bigoplus_{\Delta\in\T}J_{\Delta}$, with $\Omega$ extended orthogonally. 
\item$\alpha$ takes a $0$-cell to the sum of incident $1$-cells.
\item $\beta$ takes a $1$-cell to the sum of its edges.
\item $\a^*$ maps an edge to the sum of its endpoints.
\item $\b^*$ is the unique rotation preserving map taking $\varepsilon_{01}$ 
to $[\varepsilon_{03}]+[\varepsilon_{12}]-[\varepsilon_{02}]-[\varepsilon_{13}]$.
\end{itemize}
Since $\beta^*\circ\beta=0$, $\Ker(\beta^*)$ is orthogonal to $\Im(\beta)$, 
so $\Omega$ descends to a form on $H_3(\J)$, which is non-degenerate modulo 
torsion.


\begin{theorem}
[{Neumann~\cite[Thm~4.2]{NeumannComb}}]
\label{thm:Neumann}
The homology groups of $\J$ are given by
\begin{equation}
\begin{gathered}
H_5(\J)=0,\qquad H_4(\J)=\BZ/2\Z,\qquad H_3(\J)= 
K \oplus H^1(\widehat M;\BZ/2\Z)
\\
H_2(\J)=H_1(\widehat M;\BZ/2\Z),\qquad H_1(\J)=\Z/2\Z,
\end{gathered}
\end{equation}\label{eq:IsoTensorHalf}
where $K=\mathrm{Ker}\big(H_1(\pt M,\BZ) \longto H_1(M,\BZ/2\Z)\big)$. 
Moreover, the
isomorphism 
\begin{equation}
H_3(\J)/\mathrm{torsion} \cong K
\end{equation}
identifies $\Omega$ with the intersection form on $H_1(\partial M)$.
\end{theorem}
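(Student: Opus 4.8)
The plan is to exploit the self-dual structure of $\J$ under $\Omega$ and then reduce the computation to the simplicial topology of $\widehat M$, with Poincar\'e--Lefschetz duality accounting for the boundary. Using $\Omega$ to identify $J(\T)$ with $J(\T)^*$ and the tautological bases to identify $C_i(\T)\cong C_i(\T)^*$, one checks from the explicit formulas that $\beta^*$ is the transpose of $\beta$ and $\alpha^*$ the transpose of $\alpha$; hence $\J$ is the concatenation of the half-complex $\calA\colon 0\to C_0\xrightarrow{\alpha}C_1\xrightarrow{\beta}J$ with its dual $\calA^*$. Dualizing this complex of finitely generated free abelian groups and applying universal coefficients yields natural short exact sequences $0\to\Ext(H_{i-1}(\J),\BZ)\to H_{6-i}(\J)\to\Hom(H_i(\J),\BZ)\to 0$. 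Thus $H_{6-i}(\J)_{\mathrm{free}}\cong H_i(\J)_{\mathrm{free}}^{\vee}$, the torsion of $H_{6-i}(\J)$ is isomorphic to that of $H_{i-1}(\J)$, and (once $\Omega$ is known to descend, as noted in the excerpt) $\Omega$ induces a perfect pairing on $H_3(\J)_{\mathrm{free}}$. So it suffices to compute $H_5(\J)=\Ker\alpha$ and $H_4(\J)=\Ker\beta/\Im\alpha$ and to describe $H_3(\J)=\Ker\beta^*/\Im\beta=(\Im\beta)^{\perp_\Omega}/\Im\beta$ with its form; the groups $H_1(\J),H_2(\J)$ and the torsion of $H_3,H_4$ are then forced.

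Next, analyze the half-complex via $\widehat M$. Over $\BZ/2$ the map $\alpha$ becomes the simplicial coboundary $\delta^0\colon C^0(\widehat M;\BZ/2)\to C^1$ and $\alpha^*$ the simplicial boundary $\partial_1\colon C_1\to C_0$. The combinatorial heart is to show that, mod $2$, $\beta$ and $\beta^*$ splice these together so that $\J\otimes\BZ/2$ has the homology $H^{*}(\widehat M;\BZ/2)\oplus H_{*}(\widehat M;\BZ/2)$ in the relevant degrees: one exhibits an acyclic subcomplex $D_\bullet\subset\J\otimes\BZ/2$ recording the extra relations inside each $J_\Delta$ (the passage from the six $\varepsilon_{ij}$ to the rank-two quotient), with $(\J\otimes\BZ/2)/D_\bullet$ the co/chain complexes of $\widehat M$. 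A Bockstein/universal-coefficient comparison then upgrades this integrally. For instance $\Ker\alpha=0$ because the vertex--edge incidence matrix of $\widehat M$ has trivial integral kernel (its $1$-skeleton contains a triangle, hence is not bipartite), while its mod-$2$ kernel $H^0(\widehat M;\BZ/2)=\BZ/2$ is nonzero; by the Bockstein this puts a single $\BZ/2$ in $H_4(\J)$, and dually $H_1(\J)=\BZ/2$. The same mechanism shows the mod-$2$ homology classes of $\widehat M$ with no integral lift contribute $H^1(\widehat M;\BZ/2)$ to degree $3$ and $H_1(\widehat M;\BZ/2)$ to degree $2$.

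Finally, identify the free part of $H_3(\J)=\Ker\beta^*/\Im\beta$ with $K$ as a form. Since the edge equations are symplectically orthogonal, $\Im\beta$ is isotropic and $\Omega$ descends to this symplectic reduction, automatically non-degenerate modulo torsion; an Euler-characteristic count gives $\dim_\BQ H_3(\J)=2\chi(\widehat M)=2\sum_i g_i=\mathrm{rank}\,K$, with $g_i$ the genera of the components of $\partial M$. The content is to produce the isomorphism onto $K$ carrying $\Omega$ to the intersection pairing. Here one uses the geometric reading of $\Ker\beta^*$: an element is a combinatorial datum symplectically dual to all edge equations --- a Neumann--Zagier flattening --- which on each boundary surface reads off a $1$-cycle (this is what the cusp equations express), giving a map $\Ker\beta^*\to H_1(\partial M;\BZ)$; one checks it factors modulo $\Im\beta$ and modulo torsion to an isomorphism $H_3(\J)_{\mathrm{free}}\xrightarrow{\sim}K$ intertwining $\Omega$ with the intersection form, the $\BZ/2$-correction from $H_1(\partial M)$ down to $K$ being exactly the obstruction to lifting across $H_1(M;\BZ/2)$.

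The main obstacle is twofold: setting up the mod-$2$ short exact sequence precisely --- identifying $D_\bullet$ and checking every square commutes on the nose --- and then running the Bockstein comparison carefully enough to separate the various $\BZ/2$'s, in particular to see that $H_4(\J)$ is \emph{exactly} $\BZ/2$ and that $H_3(\J)/\mathrm{tors}$ is $K$ with no further extension; and the chain-level diagram chase identifying the descended $\Omega$ with the intersection form on $H_1(\partial M)$.
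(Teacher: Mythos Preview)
The paper does not actually prove this theorem: it is quoted verbatim from Neumann~\cite[Thm~4.2]{NeumannComb} as background. What the paper \emph{does} prove is the $\PGL(n,\C)$ generalization (Theorem~\ref{thm:1}), whose $n=2$ case recovers Neumann's result up to the precise identification of $K$ (see Remark~2.10). So the relevant comparison is with Sections~\ref{sec:OuterHomology}--\ref{sec:CuspEquations} specialized to $n=2$.

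Your overall architecture matches the paper's: both exploit self-duality of $\J$ together with universal coefficients to halve the work, and both use the Euler-characteristic count $\chi(\J)=2\chi(\widehat M)$ for the rank of $H_3$. But the two arguments diverge in where the actual computation is done. You propose to compute the \emph{upper} groups $H_5,H_4$ first (via the bipartiteness/Bockstein argument on $\alpha$) and then dualize down; the paper computes the \emph{lower} groups $H_1,H_2$ directly and dualizes up. For $H_2$ the paper does not set up a mod-$2$ short exact sequence with an acyclic subcomplex as you suggest; instead it builds explicit mutually inverse maps $\nu\colon H_2(\J)\to H_1(\widehat M;\BZ/n)$ and $\mu\colon H_1(\widehat M;\BZ/n)\to H_2(\J)$ using combinatorial gadgets (``teeth'', quad and hexagon relations) and a reduction lemma (Proposition~\ref{prop:EdgePointRep}) showing every class is represented by edge points. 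For the free part of $H_3$ the paper again works explicitly: it defines chain-level maps $\delta\colon H_1(\partial M)\to H_3(\J)$ and $\gamma\colon H_3(\J)\to H_1(\partial M)$, proves they are $\Omega/\omega$-adjoint (Proposition~\ref{prop:Adjoint}), and computes $\gamma\circ\delta$ directly (Proposition~\ref{prop:NearFar}) to identify the form. Your proposal to read off a boundary $1$-cycle from a flattening is morally the map $\gamma$, but you have not said how to produce the inverse or how to pin down the image as exactly $K$ rather than some finite-index subgroup.

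Your plan is coherent and your rank and injectivity claims check out, but as you yourself flag, the two hard steps---making the mod-$2$ comparison precise enough to nail the torsion, and carrying out the chain-level identification of $\Omega$ with the intersection form---are only gestured at. The paper's approach trades your homological-algebra packaging for longer but fully explicit combinatorial constructions; the payoff is that nothing is left to a diagram chase, at the cost of several pages of case-by-case verification.
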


\begin{remark}
\label{rm:TwiceIntersection}
Under the isomorphism
\begin{equation}
\label{eq:TwiceIntersection}
H_3(\J)\otimes \Z[1/2] \cong H_1(\partial M;\Z[1/2]),
\end{equation}
the form $\Omega$ corresponds to \emph{twice} the intersection form.
\end{remark}

\subsection{Symplectic properties of the gluing equations}
\label{sub:properties.n=2}
Neumann's result implies some important symplectic properties of the gluing 
equation matrices. We formulate them here in a way that generalizes
to the $\PGL(n,\BC)$ setting.

By the definition of $\beta$ we have for each $1$-cell $e$ an element
\begin{equation}
\label{eq:Betae}
\beta(e)=\sum_jA^\prime_{e,j}\varepsilon_{01,j}
+\sum_jB^\prime_{e,j}\varepsilon_{12,j}
+\sum_jC^\prime_{e,j}\varepsilon_{02,j}=
\sum_jA_{e,j}\varepsilon_{01,j}+\sum_jB_{e,j}\varepsilon_{12,j}
\end{equation}
in $\Im(\beta)$. Similarly, for a generator $\alpha$ of $H_1(\partial M)$, 
we have the element
\begin{equation}
\label{eq:DeltaAlpha}
\delta(\alpha)=\sum_jA^\prime_{e,j}\varepsilon_{01,j}
+\sum_jB^\prime_{e,j}\varepsilon_{12,j}+\sum_jC^\prime_{e,j}\varepsilon_{02,j}=
\sum_jA_{e,j}\varepsilon_{01,j}+\sum_jB_{e,j}\varepsilon_{12,j},
\end{equation}
which Neumann shows is in $\Ker(\beta^*)$.

\begin{corollary}
\label{cor:PoissonCommute}
Let $w_J$ be the standard symplectic form on $\Z^{2t}$ given by 
$J=\left(\begin{smallmatrix} 0 & I \\ -I & 0 \end{smallmatrix}\right)$, 
where $t$ is the number of simplices of $\CT$
and let $\iota$ denote the intersection form on $H_1(\partial M)$. 
\begin{enumerate}[(i)]
\item For any rows $x$ and $y$ of $(A\vert B)$, $w(x,y)=0$.
\item For any rows $x$ of $(A\vert B)$ and $y$ of $(A^{\cusp}\vert B^{\cusp})$, 
$w(x,y)=0$.
\item For any rows $x$ and $y$ of $(A^{\cusp}\vert B^{\cusp})$ corresponding 
to $\alpha$ and $\beta$ in $H_1(M,\partial M)$, respectively, 
$w(x,y)=\Omega(\delta(\alpha),\delta(\beta))=2\iota(\alpha,\beta)$.
\end{enumerate}
\end{corollary}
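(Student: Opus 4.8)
The plan is to translate the whole statement into Neumann's chain complex~\eqref{eq:2chain} by identifying $\Z^{2t}$ equipped with $w$ with $J(\T)$ equipped with $\Omega$; parts (i) and (ii) then reduce to the isotropy of $\Im(\beta)$ and to the orthogonality of $\Im(\beta)$ and $\Ker(\beta^*)$, while part (iii) reduces to Theorem~\ref{thm:Neumann}.

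\emph{The identification.} Modulo the relations~\eqref{eq:first}--\eqref{eq:second}, each $J_\Delta$ is free of rank $2$ on $\varepsilon_{01},\varepsilon_{12}$ (indeed $\varepsilon_{02}=-\varepsilon_{01}-\varepsilon_{12}$, and opposite edges are identified), so $J(\T)=\bigoplus_{j=1}^{t}J_{\Delta_j}$ is free of rank $2t$ on $\varepsilon_{01,1},\dots,\varepsilon_{01,t},\varepsilon_{12,1},\dots,\varepsilon_{12,t}$. With the basis ordered in this way, the normalization $\Omega(\varepsilon_{01},\varepsilon_{12})=1$, skew-symmetry, and the orthogonal extension of $\Omega$ across distinct simplices show that the Gram matrix of $\Omega$ is precisely $J=\left(\begin{smallmatrix}0&I\\-I&0\end{smallmatrix}\right)$; thus this basis gives an isomorphism $\Z^{2t}\cong J(\T)$ carrying $w$ to $\Omega$. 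By~\eqref{eq:Betae}, the image of the row of $(A\vert B)$ attached to a $1$-cell $e$ is the element $\beta(e)\in\Im(\beta)$, and by~\eqref{eq:DeltaAlpha} the image of the row of $(A^{\cusp}\vert B^{\cusp})$ attached to $\alpha$ is the element $\delta(\alpha)$, which lies in $\Ker(\beta^*)$ by Neumann. Hence for any two such rows $x,y$ the value $w(x,y)$ equals $\Omega$ evaluated on the corresponding elements of $J(\T)$; all three parts are then instances of this.

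\emph{Parts (i) and (ii).} Since~\eqref{eq:2chain} is a chain complex we have $\beta^*\circ\beta=0$, i.e.\ $\Im(\beta)\subseteq\Ker(\beta^*)$, and, as recalled above, $\Ker(\beta^*)$ is $\Omega$-orthogonal to $\Im(\beta)$. Therefore $\Im(\beta)$ is $\Omega$-isotropic, which yields (i) because both rows map into $\Im(\beta)$; and in (ii) one row maps into $\Im(\beta)$ and the other into $\Ker(\beta^*)$, so $w(x,y)=\Omega(\beta(e),\delta(\alpha))=0$.

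\emph{Part (iii).} The identification gives $w(x,y)=\Omega(\delta(\alpha),\delta(\beta))$ immediately, which is the first claimed equality. Because $\Im(\beta)$ is orthogonal to $\Ker(\beta^*)$, the form $\Omega$ descends to $H_3(\J)=\Ker(\beta^*)/\Im(\beta)$, in which $\delta(\alpha)$ and $\delta(\beta)$ represent classes; by Theorem~\ref{thm:Neumann} and Remark~\ref{rm:TwiceIntersection} the isomorphism $H_3(\J)\otimes\Z[1/2]\cong H_1(\partial M;\Z[1/2])$ carries $\Omega$ to twice the intersection form. So the remaining point is that the classes of $\delta(\alpha)$ and $\delta(\beta)$ correspond under this isomorphism to $\alpha$ and $\beta$; granting it, $\Omega(\delta(\alpha),\delta(\beta))=2\iota(\alpha,\beta)$. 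I expect this last point to be the main obstacle, as it does not follow from the chain-complex bookkeeping alone: it requires matching the combinatorial element $\delta(\alpha)$—whose coordinates are exactly the cusp-equation exponents, encoding the product of shape parameters along a normal curve representing $\alpha$—with Neumann's geometric realization of the isomorphism in Theorem~\ref{thm:Neumann}. I would settle it by unwinding that realization on the boundary surface, where the normal-curve description reduces the comparison to a direct computation, or else by invoking the corresponding statement in~\cite{NeumannComb}.
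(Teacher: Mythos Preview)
Your proof is correct and follows essentially the same approach as the paper: parts (i) and (ii) come from $\beta^*\circ\beta=0$ and the resulting $\Omega$-orthogonality of $\Im(\beta)$ and $\Ker(\beta^*)$, while part (iii) is reduced to the fact that $\delta$ induces the isomorphism of Remark~\ref{rm:TwiceIntersection}. The point you flag as the main obstacle---that the classes of $\delta(\alpha)$ correspond to $\alpha$ under Neumann's isomorphism---is handled in the paper exactly as you propose, namely by citing~\cite{NeumannComb} directly rather than reproving it.
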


\begin{proof}
The first and second statement follow from the fact that 
$\beta^*\circ\beta=0$, which implies that $\Ker(\beta^*)$ is symplectically 
orthogonal to $\Im(\beta)$. The third result is proved in 
Neumann~\cite{NeumannComb}, c.f.~Remark~\ref{rm:TwiceIntersection}. Namely 
$\delta\colon H_1(\partial M)\to H_3(\J)$ induces the isomorphism 
in~\eqref{eq:TwiceIntersection}.
\end{proof}

\begin{corollary}
\label{cor:Rank} 
The rank of $(A\vert B)$ is the number of edges minus the number of cusps.
\end{corollary}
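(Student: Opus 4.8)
The plan is to read off the rank of $(A\vert B)$ from Neumann's chain complex $\J$, using the identification of $\beta$ with the map that assembles the rows of $(A\vert B)$. By \eqref{eq:Betae}, the matrix $(A\vert B)$ is (up to identifying $J(\T)$ with $\Z^{2t}$ via the basis $\{\varepsilon_{01,j},\varepsilon_{12,j}\}$) precisely the matrix of $\beta\colon C_1(\T)\to J(\T)$. Hence $\rank(A\vert B)=\rank(\beta)=\dim_\Q\big(\im(\beta)\otimes\Q\big)$.

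First I would compute $\rank(\beta)$ from the exactness/homology data of $\J$ at $C_1(\T)$ and $J(\T)$. Since $\beta\circ\alpha=0$ and the complex has homology $H_2(\J)=H_1(\widehat M;\Z/2\Z)$ at the $C_1(\T)$ spot and $H_1(\J)=\Z/2\Z$ at the first spot, all the relevant homology is $2$-torsion; rationally the tail $0\to C_0(\T)\xrightarrow{\alpha}C_1(\T)\xrightarrow{\beta}J(\T)$ is exact, so $\alpha$ is rationally injective and $\rank(\beta)=\dim C_1(\T)-\dim C_0(\T)=(\#\text{edges})-(\#\text{vertices of }\widehat M)$. The number of vertices of $\widehat M$ is the number of boundary components of $M$, which is the number of cusps. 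This already gives the statement, and I would present it in this form: $\rank(A\vert B)=\#\text{edges}-\#\text{cusps}$.

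Alternatively — and this is the cross-check I would include to make the argument transparent — one can see directly that $\ker(\alpha)=0$ rationally: $\alpha$ sends a vertex to the sum of incident $1$-cells, and a nontrivial rational combination in $\ker(\alpha)$ would force (looking at each edge, which has exactly two endpoints) the coefficients to be equal on the two endpoints of every edge, hence constant on each connected component of $\widehat M$, and then the coefficient of any edge in $\alpha$ of that combination is twice that constant, forcing it to be $0$. So $\ker(\alpha\otimes\Q)=0$, confirming $\rank(\beta)=\dim C_1(\T)-\dim C_0(\T)$.

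The only genuine subtlety — the point I would flag as the "hard part," though here it is already supplied by Theorem \ref{thm:Neumann} — is knowing that the homology of $\J$ at the $C_1(\T)$ position is torsion, so that it contributes nothing to the rank; without Neumann's computation one would have to argue this by hand. Given Theorem \ref{thm:Neumann}, the proof is a two-line consequence: $\rank(A\vert B)=\rank(\beta)=\#C_1(\T)-\#C_0(\T)=(\#\text{edges})-(\#\text{cusps})$, where the last equality uses that $\widehat M$ has one vertex per boundary component of $M$.
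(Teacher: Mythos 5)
Your main argument is correct and is essentially the paper's own (one-line) proof: the row space of $(A\vert B)$ is $\Im(\beta)$, and Theorem~\ref{thm:Neumann} makes the tail $0\to C_0(\T)\xrightarrow{\alpha} C_1(\T)\xrightarrow{\beta} J(\T)$ rationally exact, so $\rank(A\vert B)=\dim C_1(\T)-\dim C_0(\T)=\#\text{edges}-\#\text{cusps}$. Two small slips, neither of which affects the conclusion: with the paper's indexing the homology at the source of $\beta$ is $H_4(\J)=\Z/2\Z$ and at $C_0(\T)$ it is $H_5(\J)=0$ (not $H_2$ and $H_1$, which sit at the dual end of the complex --- though by self-duality and universal coefficients either pair gives the needed torsion statements). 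And in your cross-check the kernel condition for $\alpha$ is $c_x+c_y=0$ for the two endpoints of each edge, not $c_x=c_y$; rational injectivity of $\alpha$ then follows because the $1$-skeleton contains triangles (odd cycles), not because the coefficients are constant on components.
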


\begin{proof}
This follows from the fact that $H_4(\J)$ is zero modulo torsion.
\end{proof}

\begin{remark}
A simple argument that uses the Euler characteristic shows that the number of 
edges of $\CT$ equals $t+c-h$, where $t$ is the number of simplices, 
$h=\frac{1}{2}\rank(H_1(\partial M))$ and $c$ is the number of boundary 
components. Hence, the matrix matrix $(A\vert B)$ has dimension 
$(t+c-h)\times2t$. In particular, if all boundary components are tori 
(the case of most interest), the dimension is $t\times 2t$. If we extend a 
basis for the row span of $(A\vert B)$ by cusp equations, the resulting 
$t\times 2t$ matrix has full rank, and is the upper half of a symplectic 
matrix. Such matrices play a crucial role in \cite{Di1,DG,DGG1,DGG2,GHRS}.
\end{remark}

\subsection{Statement of results}
\label{sub:results}

As described in~\cite{GaroufalidisGoernerZickert} (and reviewed in 
Section~\ref{sec:GluingEqsReview} below), there is a $\PGL(n,C)$-gluing 
equation for each non-vertex integral point $p$ of $\T$. The gluing equation 
for $p$ can be written as
\begin{equation}
\label{eq:GeneralGluingEqForm}
\prod_{(s,\Delta)}(z_{s,\Delta})^{A_{p,(s,\Delta)}}
\prod_{(s,\Delta)} (1-z_{s,\Delta})^{B_{p,(s,\Delta)}}=\varepsilon_p
\end{equation}
for matrices $A$ and $B$ whose rows are parametrized by the (non-vertex) 
integral points of $\T$ and columns by the set of subsimplices of the 
simplices of $\T$.

Furthermore there is a cusp equation for each generator $\alpha\otimes e_r$ 
of $H_1(\partial M;\Z^{n-1})$ of the form
\begin{equation}
\label{eq:GeneralCuspEqForm}
\prod_{(s,\Delta)}(z_{s,\Delta})^{A^{\cusp}_{\alpha\otimes e_r,(s,\Delta)}}
\prod_{(s,\Delta)} (1-z_{s,\Delta})^{B^{\cusp}_{\alpha\otimes e_r,(s,\Delta)}}
=\varepsilon_{\alpha\otimes e_r}
\end{equation}
for matrices $A^{\cusp}$ and $B^{\cusp}$ whose rows are parametrized by 
generators $\alpha\otimes e_r$ of $H_1(\partial M;\Z^{n-1})$ and columns by 
the set of subsimplices of the simplices of $\T$.
 
In Section~\ref{sec:ChainComplexDefn} below we define a chain complex
$\J^{\fg}=\J^{\fg}(\CT)$ (indexed such that $\J^{\fg}_5=\J^{\fg}_1=C^{\fg}_0(\T)$)
\begin{equation}\label{eq:Nchain}
\xymatrix{0 \ar[r] & C^{\fg}_0(\CT) \ar[r]^\alpha & C^{\fg}_1(\CT)\ar[r]^\beta
& J^{\fg}(\T)\ar[r]^{\beta^*} & C^{\fg}_1(\CT) \ar[r]^{\alpha^*} & 
C^{\fg}_0(\CT) \ar[r] &0}
\end{equation}
generalizing \eqref{eq:2chain}. Here $\fg$ denotes the Lie algebra of 
$\SL(n,\C)$, the notation being in anticipation of a generalization to 
arbitrary simple, complex Lie algebras. The three middle terms of $\J^{\fg}$ 
appeared already in 
Garoufalidis--Goerner--Zickert~\cite{GaroufalidisGoernerZickert}.
There is a non-degenerate antisymmetric form on $J^{\fg}(\T)$ descending to 
a non-degenerate form on $H_3(\J^{\fg})$ modulo torsion.
\begin{theorem}\label{thm:1} Let $h=\frac{1}{2}\rank(H_1(\partial M))$. The 
homology groups of $\J^{\fg}$ are given by
\begin{equation}
\begin{gathered}
H_5(\J^{\fg})=0,\qquad H_4(\J^{\fg})=\BZ/n\Z,\qquad 
H_3(\J^{\fg})= K \oplus H^1(\widehat M;\BZ/n\Z)
\\
H_2(\J^{\fg})=H_1(\widehat M;\BZ/n\Z),\qquad H_1(\J^{\fg})=\Z/n\Z,
\end{gathered}
\end{equation}
where $K\subset H_1(\pt M,\Z^{n-1})$ is a subgroup of index $n^h$.
Moreover, the isomorphism 
\begin{equation}\label{eq:IsoModN}
H_3(\J^{\fg})\otimes\Z[1/n]\cong H_1(\partial M;\Z[1/n]^{n-1})
\end{equation}
identifies $\Omega$ with the non-degenerate form $\omega_{A_{\fg}}$ on 
$H_1(\partial M;\Z[1/n]^{n-1})$ given by
\begin{equation}
\label{eq:OmegaA}
\omega_{A_{\fg}}(\alpha\otimes v,\beta\otimes w)=
\iota(\alpha,\beta)\langle v,A_{\fg}w\rangle,
\end{equation}
where $\iota$ is the intersection form and $A^{\fg}$ is the Cartan matrix of 
$\fg$.
\qed
\end{theorem}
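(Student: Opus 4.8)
The plan is to reduce Theorem~\ref{thm:1} to Neumann's Theorem~\ref{thm:Neumann} by exhibiting $\J^{\fg}$ as being built out of $n-1$ copies of Neumann's complex $\J$, suitably twisted by the root/weight data of $\fsl(n,\C)$. The $\PGL(n,\C)$-gluing equations arise by subdividing each $3$-simplex $\Delta$ into subsimplices indexed by integral points, and the shape parameter $z_{s,\Delta}$ at a subsimplex $s$ of $\Delta$ behaves, for a fixed ``level,'' exactly like a single Thurston shape parameter on a smaller $3$-simplex. Concretely, I expect that $J^{\fg}_\Delta \cong J_\Delta \otimes \BZ^{n-1}$ (one $\fsl(2)$-type piece per simple root of $\fsl(n)$, with the form $\Omega$ on $J^{\fg}_\Delta$ equal to $\Omega \otimes A_{\fg}$, the Neumann form tensored against the Cartan matrix), and likewise $C^{\fg}_i(\CT)$ decomposes into level-indexed pieces. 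The first step is therefore to make this decomposition precise: write down the explicit isomorphism of graded groups between $\J^{\fg}$ and a ``twisted sum'' $\bigoplus_r \J^{(r)}$, keeping careful track of how the maps $\alpha,\beta,\beta^*,\alpha^*$ interact across levels. If the differentials were block-diagonal in the level grading the theorem would follow immediately from Theorem~\ref{thm:Neumann} applied $n-1$ times; the content is precisely that they are \emph{not} quite block-diagonal — the off-diagonal couplings are governed by the Cartan matrix — so the computation of $H_3$ and of the descended form $\Omega$ must be done for the coupled complex.

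The second step is to compute the homology. For $H_5, H_4, H_2, H_1$ I would argue that the off-diagonal (Cartan) coupling does not affect these groups: at the ends of the complex the maps $\alpha$ and $\alpha^*$ are the usual boundary/coboundary maps on the simplicial chain complex of $\widehat M$, tensored with $\BZ^{n-1}$, so $H_1$ and $H_5$ and $H_2$ and the torsion in $H_4$ come out as the stated groups with $\BZ/2$ replaced by $\BZ/n$ — the factor of $n$ appearing because the relevant relation \eqref{eq:second} in $J^{\fg}_\Delta$ now reads with coefficient $n$ (the sum of the simple roots, measured against the Cartan form, produces $n$ rather than $2$). This is the natural generalization of Neumann's argument and I would follow his proof of Theorem~4.2 line by line, substituting the $\fsl(n)$ linear algebra. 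The only genuinely new point is $H_3$: one must show $H_3(\J^{\fg})/\text{torsion}$ is a subgroup $K$ of $H_1(\partial M;\BZ^{n-1})$ of index $n^h$, and this index count is where the Cartan matrix enters — the cokernel of $\beta^* \circ \beta$ restricted appropriately has order a power of $\det(A_{\fg}) = n$, one factor of $n$ per generator pair of $H_1(\partial M)$, giving $n^h$.

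The third step is to identify the form. After inverting $n$, $H_3(\J^{\fg}) \otimes \BZ[1/n] \cong H_1(\partial M; \BZ[1/n]^{n-1})$, and the map realizing this isomorphism is the generalization $\delta$ of Neumann's $\delta$ from \eqref{eq:DeltaAlpha}, sending a boundary class $\alpha \otimes e_r$ to the corresponding element of $\Ker(\beta^*) \subset J^{\fg}(\CT)$ built from the cusp-equation rows. Transporting $\Omega$ through $\delta$, the simplex-by-simplex local computation (this is the analogue of the quiver computation in Figure~\ref{fig:SL2Quiver}, now with $n-1$ decorations per simplex) produces exactly $\iota(\alpha,\beta)\langle v, A_{\fg} w\rangle$: the intersection-form factor $\iota(\alpha,\beta)$ comes from the $\J$-part exactly as in Neumann's case (Remark~\ref{rm:TwiceIntersection} gives the factor $2$, which is $\langle e_r, A_{\fg} e_r\rangle$ for $n=2$, confirming consistency), and the level-pairing factor $\langle v, A_{\fg} w\rangle$ comes from how the $n-1$ levels are glued by $\Omega$ on $J^{\fg}_\Delta$. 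The main obstacle I anticipate is Step~1 together with the $H_3$ index computation in Step~2: setting up the level-decomposition of $\J^{\fg}$ with the correct twisting so that the Cartan matrix visibly emerges, and then pushing Neumann's homology computation through a complex whose differentials are no longer block-diagonal, will require care — in particular checking that the bookkeeping of the extra relations \eqref{eq:first}--\eqref{eq:second} in $J^{\fg}_\Delta$ and their $\fsl(n)$-analogues does not introduce spurious torsion and genuinely yields index exactly $n^h$ rather than merely a power of $n$.
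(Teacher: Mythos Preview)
Your proposal rests on a decomposition that does not exist. You claim $J^{\fg}_\Delta \cong J_\Delta \otimes \BZ^{n-1}$, i.e.\ one Neumann $\fsl(2)$-piece per simple root, but the ranks already disagree: $J^{\fg}_\Delta$ is a direct sum of copies of $J_{\Delta^3_2}$ indexed by \emph{subsimplices} $s\in\Delta^3_{n-2}(\BZ)$, so $\rank J^{\fg}_\Delta = 2\binom{n+1}{3}$, not $2(n-1)$. Likewise $C^{\fg}_1(\CT)$ is free on all non-vertex integral points, which grows cubically in $n$ per simplex, not linearly. So there is no isomorphism of graded groups between $\J^{\fg}$ and a twisted sum $\bigoplus_{r=1}^{n-1}\J^{(r)}$ of copies of Neumann's complex, and your Step~1 cannot be carried out. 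Relatedly, your explanation for why $\BZ/2$ becomes $\BZ/n$ --- that relation~\eqref{eq:second} ``now reads with coefficient $n$'' --- is incorrect: the relation $\varepsilon_{01}+\varepsilon_{12}+\varepsilon_{02}=0$ holds verbatim in each subsimplex, with no coefficient $n$ anywhere.

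The paper does not reduce to Neumann's theorem at all; it computes everything directly. For $H_1$ one writes down an explicit surjection $C^{\fg}_0(\CT)\to\BZ/n$ and shows by hand that any element of $\Ker\epsilon$ lies in $\Im\alpha^*$. For $H_2$ one constructs mutually inverse maps $\nu\colon H_2(\J^{\fg})\to H_1(\widehat M;\BZ/n)$ and $\mu\colon H_1(\widehat M;\BZ/n)\to H_2(\J^{\fg})$; the key technical input is a family of ``quad'' and ``hexagon'' relations in $\Im\beta^*$ which allow one to show every class in $H_2$ is represented by edge points (Proposition~\ref{prop:EdgePointRep}). Then $H_4$ and $H_5$ follow by self-duality and universal coefficients. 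For $H_3$ the paper constructs, following Neumann's template but with genuinely new formulas, chain maps $\delta$ and $\gamma$ between $\J^{\fg}$ and the cellular complexes of two auxiliary decompositions of $\partial M$, proves they are adjoint with respect to $\Omega$ and $\omega$, and computes $\gamma\circ\delta = \id\otimes D A_{\fg} D$ for an explicit diagonal matrix $D$. The Cartan matrix emerges from this computation (Lemma~\ref{lemma:NearFar}), not from any level decomposition, and the index $n^h$ comes from $\det A_{\fg}=n$ after factoring $\delta=\delta'\circ(\id\otimes D)$. Your intuition that the Cartan matrix should control both the index and the form is correct, but the mechanism is a direct calculation of $\gamma\circ\delta$, not a tensor decomposition of the complex.
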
 

\begin{remark}
Presumably, $K=\Ker\big(H_1(\partial M;\Z^{n-1})\to H_1(M;\Z/n\Z)\big)$, 
where $\Z/n\Z$ is regarded as the quotient of  $\Z^{n-1}$ by the Cartan matrix.
This would be analogous to Neumann's result Theorem~\ref{thm:Neumann}. 
\end{remark}

For each integral point $p$, the element
\begin{equation}
\label{eq:BetapGeneral}
\sum_{(s,\Delta)} A_{p,(s,\Delta)}(s,\varepsilon_{01})_\Delta
+\sum_{s,\Delta} B_{p,(s,\Delta)}(s,\varepsilon_{12})_\Delta
\end{equation}
equals $\beta(p)$, and is thus in $\Im(\beta)$. 
 For each generator $\alpha\otimes e_r$ of $H_1(\partial M;\Z^{n-1})$ we show 
that the element
\begin{equation}
\sum_{(s,\Delta)}A^{\cusp}_{\alpha\otimes e_r,(s,\Delta)}(s,\varepsilon_{01})_\Delta
+\sum_{(s,\Delta)}B^{\cusp}_{\alpha\otimes e_r,(s,\Delta)}(s,\varepsilon_{12})_\Delta,
\end{equation}
is in the kernel of $\beta^*$. In fact it equals 
$\delta^{\prime}(\alpha\otimes e_r)$ for a map $\delta^{\prime}\colon 
H_1(\partial M;\Z^{n-1})$ which induces the isomorphism~\eqref{eq:IsoModN}.

Analogously to Corollary~\ref{cor:PoissonCommute} we have.

\begin{corollary}
\label{cor:PoissonCommuteGeneral}
The rows of $(A\vert B)$ and $(A^{\cusp}\vert B^{\cusp})$ are orthogonal with 
respect to the standard symplectic form $\omega_J$. Moreover, if $x$ and $y$ 
are rows of $(A^{\cusp}\vert B^{\cusp})$ corresponding to $\alpha\otimes e_r$ 
and $\beta\otimes e_s$, respectively, we have
\begin{equation}
\omega_J(x,y)=\Omega\big(\delta^{\prime}(\alpha\otimes e_r),
\delta^{\prime}(\beta\otimes e_s)\big)
=\iota(\alpha,\beta)\langle e_r,A_{\fg}e_s\rangle.\qed
\end{equation}
\end{corollary}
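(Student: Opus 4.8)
The plan is to deduce Corollary~\ref{cor:PoissonCommuteGeneral} from Theorem~\ref{thm:1} in exactly the same way that Corollary~\ref{cor:PoissonCommute} is deduced from Theorem~\ref{thm:Neumann}, the only new ingredient being the explicit form $\omega_{A_{\fg}}$ in place of twice the intersection form. The backbone of the argument is the identity $\beta^*\circ\beta=0$ in the complex $\J^{\fg}$, which forces $\Im(\beta)$ and $\Ker(\beta^*)$ to be mutually orthogonal under the nondegenerate form $\Omega$ on $J^{\fg}(\T)$. Concretely, for each integral point $p$ the element \eqref{eq:BetapGeneral} lies in $\Im(\beta)$, and I would observe that pairing two such elements under $\Omega$ recovers exactly $\omega_J$ applied to the corresponding rows of $(A\vert B)$: this is because $\Omega$ restricted to the span of the $(s,\varepsilon_{01})_\Delta$ and $(s,\varepsilon_{12})_\Delta$ (with the columns indexed by pairs $(s,\Delta)$) is the standard symplectic form $J$, by the defining relations of $\Omega$ on each $J_\Delta$ together with orthogonality of the summands. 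Similarly the cusp element is shown in the discussion preceding the corollary to equal $\delta'(\alpha\otimes e_r)\in\Ker(\beta^*)$, so pairing a row of $(A\vert B)$ with a row of $(A^{\cusp}\vert B^{\cusp})$ gives $\Omega$ of an element of $\Im(\beta)$ with one of $\Ker(\beta^*)$, which vanishes; and pairing two cusp rows gives $\Omega(\delta'(\alpha\otimes e_r),\delta'(\beta\otimes e_s))$.

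The remaining point is to identify $\Omega(\delta'(\alpha\otimes e_r),\delta'(\beta\otimes e_s))$ with $\iota(\alpha,\beta)\langle e_r,A_{\fg}e_s\rangle$. Here I would invoke Theorem~\ref{thm:1}: the map $\delta'$ factors through $H_3(\J^{\fg})$ (its image lands in $\Ker(\beta^*)$ and is well-defined modulo $\Im(\beta)$, since $\Omega$ pairs $\Ker(\beta^*)$ trivially against $\Im(\beta)$), and after inverting $n$ it induces the isomorphism \eqref{eq:IsoModN} under which $\Omega$ becomes $\omega_{A_{\fg}}$. Since the quantity $\Omega(\delta'(\alpha\otimes e_r),\delta'(\beta\otimes e_s))$ is an integer, and inverting $n$ is injective on $\Z$, the identity $\Omega(\delta'(\alpha\otimes e_r),\delta'(\beta\otimes e_s))=\omega_{A_{\fg}}(\alpha\otimes e_r,\beta\otimes e_s)=\iota(\alpha,\beta)\langle e_r,A_{\fg}e_s\rangle$ holds already over $\Z$. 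This is the same device used in Corollary~\ref{cor:PoissonCommute}(iii) via Remark~\ref{rm:TwiceIntersection}.

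I anticipate two small obstacles. The first is bookkeeping: one must be careful that the chosen basis of $J^{\fg}(\T)$ in which $\Omega$ becomes the standard $J$ is compatible with the column indexing of $(A\vert B)$ used in \eqref{eq:GeneralGluingEqForm}–\eqref{eq:GeneralCuspEqForm}; this is a matter of matching the identity \eqref{eq:BetapGeneral} (which is established in Section~\ref{sec:GluingEqsReview}) with the definitions of $\beta$ and $\Omega$ on $\J^{\fg}$. The second, and genuinely the only substantive step, is the well-definedness of the induced map $\delta'\colon H_1(\partial M;\Z^{n-1})\to H_3(\J^{\fg})$ and the verification that it is precisely the map realizing \eqref{eq:IsoModN}; but this is exactly what is asserted in the paragraph immediately preceding the corollary, so in the proof I may simply cite it. With those in hand the proof is three lines:

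\begin{proof}
Since $\beta^*\circ\beta=0$, the subspace $\Im(\beta)$ is $\Omega$-orthogonal to $\Ker(\beta^*)$. By \eqref{eq:BetapGeneral} the rows of $(A\vert B)$ correspond to elements of $\Im(\beta)$, and by the discussion above the rows of $(A^{\cusp}\vert B^{\cusp})$ correspond to the elements $\delta'(\alpha\otimes e_r)\in\Ker(\beta^*)$. In the basis of $J^{\fg}(\T)$ given by the $(s,\varepsilon_{01})_\Delta$ and $(s,\varepsilon_{12})_\Delta$, the form $\Omega$ is the standard symplectic form $\omega_J$, so $\omega_J$ evaluated on any two such rows equals $\Omega$ evaluated on the corresponding elements of $J^{\fg}(\T)$. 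Hence $\omega_J$ vanishes on pairs of rows of $(A\vert B)$, on mixed pairs, and on pairs of cusp rows it equals $\Omega(\delta'(\alpha\otimes e_r),\delta'(\beta\otimes e_s))$. Since $\delta'$ induces the isomorphism \eqref{eq:IsoModN} of Theorem~\ref{thm:1}, under which $\Omega$ becomes $\omega_{A_{\fg}}$, and since the left-hand side is an integer while inverting $n$ is injective on $\Z$, we get $\Omega(\delta'(\alpha\otimes e_r),\delta'(\beta\otimes e_s))=\iota(\alpha,\beta)\langle e_r,A_{\fg}e_s\rangle$.
\end{proof}
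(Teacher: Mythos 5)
Your proposal is correct and follows essentially the same route as the paper: identify the rows of $(A\vert B)$ with the elements $\beta(p)\in\Im(\beta)$ and the cusp rows with $\delta^{\prime}(\alpha\otimes e_r)\in\Ker(\beta^*)$, note that $\Omega$ in the basis $\{(s,\varepsilon_{01})_\Delta\}\cup\{(s,\varepsilon_{12})_\Delta\}$ is the standard form $\omega_J$, and get orthogonality from $\beta^*\circ\beta=0$. The only cosmetic difference is that for the last identity the paper points directly at the computation \eqref{eq:OmegaAComputation} (adjointness of $\delta^{\prime}$ and $\gamma^{\prime}$ together with $\gamma^{\prime}\circ\delta^{\prime}=\id\otimes A_{\fg}$), whereas you cite the statement of Theorem~\ref{thm:1} and add the (correct) integrality remark to descend from $\Z[1/n]$ to $\Z$ — logically the same argument.
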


\begin{corollary}
\label{cor:RankGeneral} 
The rank of $(A\vert B)$ is the number of non-vertex integral points minus 
$c(n-1)$, where $c$ is the number of boundary components.
\qed
\end{corollary}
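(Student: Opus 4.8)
The plan is to imitate the proof of Corollary~\ref{cor:Rank} for $n=2$, with Neumann's complex replaced by $\J^{\fg}$ and Theorem~\ref{thm:Neumann} replaced by Theorem~\ref{thm:1}. The first step is to observe that, exactly as in \eqref{eq:Betae} for the case $n=2$, the matrix $(A\vert B)$ is the matrix of the homomorphism $\beta\colon C^{\fg}_1(\CT)\to J^{\fg}(\T)$ of the complex \eqref{eq:Nchain}, relative to the basis of $C^{\fg}_1(\CT)$ given by the non-vertex integral points of $\T$ and the coordinates $(s,\varepsilon_{01})_\Delta$, $(s,\varepsilon_{12})_\Delta$ on $J^{\fg}(\T)$: this is precisely the content of \eqref{eq:BetapGeneral}, which says that the row of $(A\vert B)$ indexed by a non-vertex integral point $p$ is $\beta(p)$ written in these coordinates. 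Consequently $\rank(A\vert B)=\rank\beta$, where the rank may be computed over $\BZ$ or, equivalently, over $\BQ$.

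To compute $\rank\beta$ I would rationalize \eqref{eq:Nchain}. Since $\BQ$ is flat over $\BZ$, the homology of $\J^{\fg}\otimes\BQ$ is $H_\bullet(\J^{\fg})\otimes\BQ$, and by Theorem~\ref{thm:1} we have $H_5(\J^{\fg})=0$ and $H_4(\J^{\fg})=\BZ/n\BZ$, both of which vanish after tensoring with $\BQ$. Hence the rationalized complex is exact at the left-hand $C^{\fg}_0(\CT)$ and at the left-hand $C^{\fg}_1(\CT)$; that is, $\alpha$ is rationally injective and $\Ker(\beta\otimes\BQ)=\Im(\alpha\otimes\BQ)$. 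Therefore
\[
\rank\beta=\rank C^{\fg}_1(\CT)-\dim_{\BQ}\Ker(\beta\otimes\BQ)=\rank C^{\fg}_1(\CT)-\rank C^{\fg}_0(\CT).
\]

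It remains only to identify the two ranks from the definition of the chain groups in Section~\ref{sec:ChainComplexDefn}: $\rank C^{\fg}_1(\CT)$ is the number of non-vertex integral points of $\T$, and $\rank C^{\fg}_0(\CT)=c(n-1)$, since the $0$-cells of an ideal triangulation are in bijection with the $c$ boundary components of $M$ and each $0$-cell contributes a factor $\BZ^{n-1}$. Substituting these into the displayed identity yields the claim. The argument is entirely formal once Theorem~\ref{thm:1} is in hand; the one step demanding care is the identification in the first paragraph — checking, via the construction of $\J^{\fg}$, that $(A\vert B)$ really is the matrix of $\beta$ with no accidental loss of rank, and matching $\rank C^{\fg}_0(\CT)$ and $\rank C^{\fg}_1(\CT)$ to the stated combinatorial quantities.
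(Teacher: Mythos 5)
Your proof is correct and takes essentially the same route as the paper's: identify $(A\vert B)$ with the matrix of $\beta$ in the bases given by the non-vertex integral points and the generators $(s,\varepsilon_{01})_\Delta$, $(s,\varepsilon_{12})_\Delta$, then use $H_5(\J^{\fg})=0$ together with the fact that $H_4(\J^{\fg})=\BZ/n\BZ$ is torsion to conclude $\rank\beta=\rank C^{\fg}_1(\CT)-\rank C^{\fg}_0(\CT)$. The paper compresses this into the single remark that the corollary ``follows immediately from the fact that $H_4(\J^{\fg})$ is zero modulo torsion''; you have merely spelled out the rationalization argument and the counts $\rank C^{\fg}_1(\CT)=\#\{\text{non-vertex integral points}\}$ and $\rank C^{\fg}_0(\CT)=c(n-1)$.
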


\begin{remark} 
If all boundary components are tori, $(A\vert B)$ has twice as many columns 
as rows, and $c(n-1)=\frac{1}{2}\rank H_1(\partial M;\Z^{n-1})$. It follows 
that one can extend a basis for the row space of $(A\vert B)$ by adding 
cusp equations to obtain a matrix with full rank. This matrix is the upper 
part of a symplectic matrix and as stated in the introduction plays a 
crucial role in extending the work of \cite{Di1,DG,DGG1,DGG2,GHRS} to the
$\PGL(n,\BC)$ setting. This will be discussed in a future publication.
\end{remark}

\subsection{A side comment on quivers}

If you take a quiver as in Figure~\ref{fig:SL2Quiver} for each subsimplex 
and superimpose them canceling edges with opposite orientations, you get the 
quiver shown in Figure~\ref{f.pgln.quiver}. Everything cancels in the 
interior. The quiver on the face equals the quiver in 
Fock--Goncharov~\cite[Fig.~1.5]{FockGoncharov}, and also appears for $n=3$ 
in Bergeron--Falbel--Guilloux~\cite[Fig.~4]{BFG}. One can go from 
the quiver on two of the faces to the quiver on the two other faces by 
performing quiver mutations (see e.g.~Keller~\cite{Keller}). The quiver 
mutations change the $X$-coordinates and Ptolemy coordinates by cluster 
mutations~\cite{ClusterAlgebras3}, and there is a one-one correspondence 
between quiver mutations and subsimplices. Although we do not need any of 
this here, this observation was a major motivation for 
\cite{GaroufalidisThurstonZickert} and \cite{GaroufalidisGoernerZickert}.

\begin{figure}[htpb]
\input{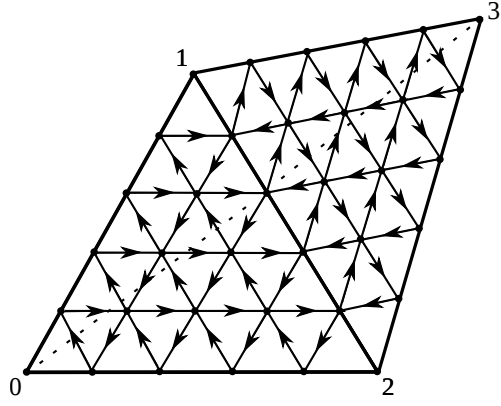}
\caption{Superposition of copies of the quiver in Figure~\ref{fig:SL2Quiver}, 
one for each subsimplex.}
\label{f.pgln.quiver}
\end{figure}


\section{Shape assignments and gluing equations}
\label{sec:GluingEqsReview}

Fix a manifold $M$ with a triangulation $\T$. We identify each simplex of 
$M$ with the simplex
\begin{equation}
\Delta^3_n = \big\{(x_0,x_1,x_2,x_3) \in \R^4\bigm\vert 0\leq x_i \leq n, 
x_0+x_1+x_2+x_3 = n\big\}.
\end{equation}
Let $\Delta_n^3(\Z)$ and denote the integral points of $\Delta^3_n$, and 
$\dot\Delta^3_n(\Z)$ denote the integral points with the $4$ vertex points 
removed.
The natural left $A_4$-action on $\Delta^3_n$ given by 
\begin{equation}
\sigma(x_0,\dots,x_3)=(x_{\sigma^{-1}(0)},\dots,x_{\sigma^{-1}(3)})
\end{equation}
induces $A_4$-actions on $\Delta_n^3(\Z)$ and $\dot\Delta^3_n(\Z)$ as well.

\begin{definition}
\label{def:Subsimplex}
A \emph{subsimplex} of $\Delta^3_n$ is a subset $S$ of $\Delta^3_n$ obtained 
by translating $\Delta^3_2\subset\R^4$ by an element $s$ in 
$\Delta^3_{n-2}(\Z)\subset\Z^4$, i.e.~$S=s+\Delta^3_2$.
\end{definition}

We shall identify the edges of an ordered simplex with $\dot\Delta^3_2(\Z)$, 
e.g.~the edges $\varepsilon_{01}$ and $\varepsilon_{12}$ correspond to $(1100)$ 
and $(0110)$.

\begin{definition}
\label{shapeassignment}
A shape assignment on $\Delta^3_n$ is an assignment 
\begin{equation}
z\colon\Delta^3_{n-2}(\Z)\times \dot{\Delta}^3_2(\Z)\to\C\setminus\{0,1\}, 
\qquad (s,e)\mapsto z^e_s
\end{equation}
satisfying the \emph{shape parameter relations}

\begin{equation}
\label{eqn:shapeparamrel}
z^{\varepsilon_{01}}_s=z^{\varepsilon_{23}}_s=\frac{1}{1-z^{\varepsilon_{02}}_s},
\quad z^{\varepsilon_{12}}_s=z^{\varepsilon_{03}}_s=\frac{1}{1-z^{\varepsilon_{01}}_s},
\quad z^{\varepsilon_{02}}_s=z^{\varepsilon_{13}}_s=\frac{1}{1-z^{\varepsilon_{12}}_s}
\end{equation}

\end{definition}
One may think of a shape assignment as an assignment of shape parameters to 
the edges of each subsimplex. The ad hoc indexing of the shape parameters by 
$z$, $z^\prime$ and $z^{\prime\prime}$ is replaced by an indexing scheme, in 
which a shape parameter is indexed according to the edge to which it is 
assigned.

\begin{definition}
\label{def:IntegralPoint}
An \emph{integral point} of $\mathcal{T}$ is an equivalence class of points 
in $\Delta_n^3(\Z)$ identified by the face pairings of $\T$. We view an 
integral point as a set of pairs $(t,\Delta)$ with $t\in\Delta^3_n(\Z)$ and 
$\Delta\in\mathcal T$. An integral point is either a \emph{vertex point}, an 
\emph{edge point}, a \emph{face point}, or an \emph{interior point}.
\end{definition}

\begin{definition}
\label{def:GluingEquationsOri}
A shape assignment on $\mathcal{T}$ is a shape assignment $z^e_{s,\Delta}$ on 
each simplex $\Delta\in\mathcal{T}$ such that for each non-vertex integral 
point $p$, the \emph{generalized gluing equation}
\begin{equation}
\label{eq:DefGeneralizedGluingEq}
\prod\limits_{(t,\Delta)\in p}\,\prod\limits_{t = s + e} z^e_{s,\Delta} = 1. 
\end{equation}
is satisfied. 
\end{definition}

The gluing equation for $p$ sets equal to $1$ the product of the shape 
parameters of all edges of subsimplices having $p$ as midpoint, see 
Figures~\ref{fig:EdgeEquation} and \ref{fig:FaceEquation} (taken 
from~\cite{GaroufalidisGoernerZickert}).

\begin{figure}[htb]
\begin{center}
\begin{minipage}[c]{0.44\textwidth}
\scalebox{0.37}{\input{tetGluing1.tex}}
\end{minipage}
\begin{minipage}[c]{0.44\textwidth}
\input{tetFaceGluingEquation.tex}
\end{minipage}
\hfill
\\
\begin{minipage}[t]{0.5\textwidth}
\caption{Edge equation for $n=5$:\\
$z_{1200,0}^{1100}z_{0102,1}^{0101}z_{0120,2}^{0110}=1$.}
\label{fig:EdgeEquation}
\end{minipage}
\hspace{-0.3cm}
\begin{minipage}[t]{0.5\textwidth}
\caption{Face equation for $n=6$:\\ 
$z^{0011}_{2011,0}z^{1001}_{1021,0}z^{1010}_{1012,0}z^{0011}_{0211,1}
z^{0101}_{0121,1}z^{0110}_{0112,1}=1$.}
\label{fig:FaceEquation}
\end{minipage}
\end{center}
\end{figure}

Note that the gluing equation for $p$ can be written in the form
\begin{equation}.
\prod_{(s,\Delta)}(z_{(s,\Delta)})^{A_{p,(s,\Delta)}}\prod_{(s,\Delta)} 
(1-z_{(s,\Delta)})^{B_{p,(s,\Delta)}}=\varepsilon_p
\end{equation}

\begin{theorem}
[{Garoufalidis--Goerner--Zickert~\cite{GaroufalidisGoernerZickert}}]
A shape assignment on $\T$ determines a representation 
$\pi_1(M)\to \PGL(n,\C)$. 
\end{theorem}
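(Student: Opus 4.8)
The plan is to reduce this statement to the key construction from \cite{GaroufalidisGoernerZickert}, namely the assignment of a flag/decoration to each vertex of each simplex and of a Ptolemy-type (or $X$-type) coordinate to each edge of each subsimplex, and to show that the gluing equations are exactly the compatibility conditions that glue these local pieces into a global cocycle on $M$. First I would recall that a shape assignment on a single simplex $\Delta^3_n$ is equivalent, via the shape parameter relations~\eqref{eqn:shapeparamrel}, to a triple of ``cross-ratio'' coordinates on each subsimplex $S = s + \Delta^3_2$, and that each such triple determines (up to the action of the relevant unipotent/torus group) the position of four flags in $\C^n$ spanning that subsimplex. Assembling over all subsimplices of $\Delta$ yields a tuple of flags at the four vertices of $\Delta$, i.e.\ a point of the configuration space studied by Fock--Goncharov; this is the content already present in the three middle terms of the chain complex discussed in Section~\ref{sec:ChainComplexDefn} and in \cite{GaroufalidisGoernerZickert}.

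The next step is to show that the generalized gluing equation~\eqref{eq:DefGeneralizedGluingEq} at each non-vertex integral point $p$ is precisely the condition needed for the flag data on the simplices meeting at $p$ to agree across the face pairings. For an edge point this is the $\PGL(n)$ analogue of Thurston's edge relation (the holonomy around the edge, expressed through the subsimplex coordinates, is trivial in $\PGL(n,\C)$); for a face point it is the Ptolemy/cluster compatibility on the shared face, matching the quiver picture of Figure~\ref{f.pgln.quiver}. Concretely, I would fix a spanning tree in the dual $1$-skeleton of $\T$, use it to trivialize the flag bundle over the $2$-skeleton, and then read off a $\PGL(n,\C)$-valued $1$-cochain on the dual complex whose value on each dual edge is the transition matrix between the two decorations on the shared face. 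The gluing equations at edge and face points are exactly the $1$-cocycle (flatness) conditions, so this cochain defines a flat $\PGL(n,\C)$-bundle on a deformation retract of $M$, hence a representation $\pi_1(M)\to\PGL(n,\C)$, well defined up to conjugation because a change of spanning tree or of the initial trivialization changes the cocycle by a coboundary.

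The main obstacle is bookkeeping: one must check that the ``natural'' matrices built from the subsimplex coordinates genuinely assemble into well-defined transition maps on faces --- i.e.\ that the local choices (orderings of vertices, the $A_4$-action, the choice of which vertex of a subsimplex is the apex) are handled consistently so that the face-point equations say what one wants --- and that the resulting cocycle condition around an arbitrary dual loop, not just a dual triangle, follows from the edge- and face-point equations alone. I expect the cleanest route is to not build the representation by hand but to invoke the decorated-triangulation formalism: a decoration is a lift of the would-be representation to the space of flags, the edge and face gluing equations are the obstruction to such a lift being coherent, and their vanishing produces the $\PGL(n,\C)$-cocycle directly. Since this is exactly the theorem of \cite{GaroufalidisGoernerZickert} quoted here, in the paper itself the ``proof'' would mostly be a pointer to that reference together with the dictionary (shape assignment $\leftrightarrow$ Ptolemy/$X$-coordinates $\leftrightarrow$ flag configuration) spelled out above; the genuinely new content of the present paper lies not here but in the symplectic Theorem~\ref{thm:1}.
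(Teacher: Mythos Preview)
Your reading is correct: the paper gives no proof of this theorem at all --- it is stated with the attribution \cite{GaroufalidisGoernerZickert} and nothing more, exactly as you anticipated in your final paragraph. Your sketch of the underlying construction (shape assignment $\to$ flag decoration $\to$ natural cocycle $\to$ representation) is in line with the approach of the cited reference, so there is nothing to compare.
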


\subsection{$X$-coordinates}

The $X$-coordinates are defined on the face points of $\T$, and are used 
in Section~\ref{sec:CuspEquations} to define the cusp equations. They agree 
with the $X$-coordinates of Fock and Goncharov~\cite{FockGoncharov}.

\begin{definition} 
Let $z$ be a shape assignment on $\Delta^3_n$ and let $t\in\Delta^3_n(\Z)$ 
be a face point. The \emph{$X$-coordinate} at $t$ is given by
\begin{equation}
\label{eq:XandProducts}
X_t=-\prod_{s+e=t}z^e_s,
\end{equation}
i.e.~it equals (minus) the product of the shape parameters of the $3$ edges 
of subsimplices having $t$ as a midpoint. 
\end{definition} 

\begin{remark} 
Note that the gluing equation for a face point 
$p=\{(t_1,\Delta_1),(t_2,\Delta_2)\}$ states that $X_{t_1}X_{t_2}=1$.
\end{remark}



\section{Definition of the chain complex}
\label{sec:ChainComplexDefn}

Let $C_0^{\fg}(\T)=C_0(\T)\otimes \Z^{n-1}$. Letting $e_1,\dots, e_n$, denote 
the standard basis vectors of $\Z^{n-1}$, it follows that $C_0^{\fg}(\T)$ is 
generated by symbols $x\otimes e_i$, where $x$ is a $0$-cell of $\T$. It will 
occasionally be convenient to define $e_0=e_n=0$. Let $C_1^{\fg}(\T)$ be the 
free abelian group on the non-vertex integral points of $\T$, and let 
\begin{equation}
J^{\fg}(\T)=\bigoplus_{\Delta\in\T}\bigoplus_{s\in\Delta_{n-2}^3}J_{\Delta_2^3},
\end{equation}
be a direct sum of copies of $J_{\Delta^3_2}$, one for each subsimplex of each 
simplex of $\T$. Note that $J^{\fg}(\T)$ is generated by the set of all edges 
of all subsimplices of the simplices of $\T$. We denote a generator by 
$(s,e)_\Delta$. The form $\Omega$ on $J_{\Delta_2^3}$ induces by orthogonal 
extension a form on $J^{\fg}(\T)$ also denoted by $\Omega$. Since $\Omega$ is 
non-degenerate it induces a natural identification of $J^{\fg}(\T)$ with its 
dual. Similarly, the natural bases of $C_0^{\fg}(\T)$ and $C_1^{\fg}(\T)$ induce 
natural identifications with their respective duals. 

\subsection{Formulas for $\beta$ and $\beta^*$}

Define
\begin{equation}
\label{eq:DefBeta}
\beta\colon C^{\fg}_{1}(\T)\to J^{\fg}(\T),\qquad 
p=\{(t,\Delta)\}\mapsto\sum_{(\Delta,t)\in p}\sum_{e+s=t}(s,e)_\Delta.
\end{equation}
Hence, $\beta$ takes $p$ to the formal sum of all the edges of subsimplices 
whose midpoint is $p$.
By \cite[Lemma~7.3]{GaroufalidisGoernerZickert}, the dual map 
$\b^*\colon  J^{\fg}(\T) \to  C^{\fg}_1(\T)$ is given by
\begin{equation}
\label{eq:BetaStar}
\begin{aligned}
(s,\varepsilon_{01})_\Delta&
\mapsto[(s+\varepsilon_{03},\Delta)]+[(s+\varepsilon_{12},\Delta)]
-[(s+\varepsilon_{02},\Delta)]-[(s+\varepsilon_{13},\Delta)]\\
(s,\varepsilon_{12})_\Delta&
\mapsto[(s+\varepsilon_{02},\Delta)]+[(s+\varepsilon_{13},\Delta)]
-[(s+\varepsilon_{01},\Delta)]-[(s+\varepsilon_{23},\Delta)]\\
(s,\varepsilon_{02})_\Delta&
\mapsto[(s+\varepsilon_{01},\Delta)]+[(s+\varepsilon_{23},\Delta)]
-[(s+\varepsilon_{23},\Delta)]-[(s+\varepsilon_{12},\Delta)].
\end{aligned}
\end{equation}

We refer to an element of the form $\beta^*(s,\varepsilon_{ij})$ as an 
\emph{elementary quad relation}, see 
Figures~\ref{fig:Quad01},~\ref{fig:Quad12} and \ref{fig:Quad02}.

\begin{figure}[htb]
\begin{center}
\begin{minipage}[c]{0.3\textwidth}
\scalebox{0.6}{\input{Quad01.tex}}
\end{minipage}
\begin{minipage}[c]{0.3\textwidth}
\scalebox{0.6}{\input{Quad12.tex}}
\end{minipage}
\begin{minipage}[c]{0.3\textwidth}
\scalebox{0.6}{\input{Quad02.tex}}
\end{minipage}
\\
\hspace{-0.3cm}
\begin{minipage}[t]{0.33\textwidth}
\caption{$\beta^*(s,\varepsilon_{01})$.}\label{fig:Quad01}
\end{minipage}
\hspace{-0.4cm}
\begin{minipage}[t]{0.33\textwidth}
\caption{$\beta^*(s,\varepsilon_{12})$.}\label{fig:Quad12}
\end{minipage}
\hspace{-0.5cm}
\begin{minipage}[t]{0.33\textwidth}
\caption{$\beta^*(s,\varepsilon_{02})$.}\label{fig:Quad02}
\end{minipage}
\end{center}
\end{figure}

\begin{lemma}
[{Garoufalidis--Goerner--Zickert~\cite[Prop.~7.4]{GaroufalidisGoernerZickert}}]
$\b^*\circ\b=0$.\qed
\end{lemma}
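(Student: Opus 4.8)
The plan is to verify $\beta^*\circ\beta=0$ on the natural generators of $C^{\fg}_1(\T)$, i.e.\ to show $\beta^*(\beta(p))=0$ for each non-vertex integral point $p$ of $\T$. The point is that both maps are \emph{local}. By \eqref{eq:DefBeta}, $\beta(p)=\sum_{(t,\Delta)\in p}\beta_\Delta(t)$ with $\beta_\Delta(t)=\sum_{s+e=t}(s,e)_\Delta$, the inner sum running over the ways of writing the point $t$ as the midpoint $s+e$ of an edge $e$ of a subsimplex $S=s+\Delta^3_2$ of $\Delta$; there are six such ways when $t$ is interior to $\Delta$, three when $t$ lies on a face, one when $t$ lies on an edge. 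By \eqref{eq:BetaStar} (see Figures~\ref{fig:Quad01}--\ref{fig:Quad02}) each $\beta^*((s,e)_\Delta)$ is a signed sum of four classes of integral points of the \emph{same} simplex $\Delta$, namely points $(s+\varepsilon_{k\ell},\Delta)=(t-e+\varepsilon_{k\ell},\Delta)$. Hence $\beta^*(\beta_\Delta(t))$ is an explicit integer combination of at most $6\times4=24$ integral points of $\Delta$, and the statement becomes a finite combinatorial identity. Equivalently, since $\beta^*$ is by construction the $\Omega$-adjoint of $\beta$ (via the self-identifications of $J^{\fg}(\T)$ under $\Omega$ and of $C^{\fg}_1(\T)$ under its natural basis), one may instead prove that $\im(\beta)$ is $\Omega$-isotropic, $\Omega(\beta(p),\beta(q))=0$; because $\Omega$ is the orthogonal sum over the subsimplex summands $J_{\Delta^3_2}$, this amounts to cancellation of the $\pm1$'s coming from the finitely many subsimplices whose edges have midpoints in both $p$ and $q$. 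This is the form of the statement used for $n=2$ in \cite{NeumannComb} and in Corollary~\ref{cor:PoissonCommuteGeneral}.

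The verification splits according to the type of $p$. If $p=\{(t,\Delta)\}$ is an \emph{interior} point, then $\beta^*(\beta(p))=\beta^*(\beta_\Delta(t))$ lives entirely in a single simplex, and one checks directly that the $24$ displaced points occur with cancelling signs; this closed identity inside one $\Delta^3_n$ can be cut down to a couple of essentially different configurations of the displacements $\varepsilon_{k\ell}$ about $t$ using the $A_4$-action on $\Delta^3_n$, which is orientation preserving, preserves $\Omega$, and permutes subsimplices, integral points and edge vectors compatibly with $\beta$ and $\beta^*$. If $p=\{(t,\Delta),(t',\Delta')\}$ is a \emph{face} point, $\Delta$ and $\Delta'$ being glued along a common face $F$ with $t$ and $t'$ identified by the face pairing, one shows that the displaced points of $\beta^*(\beta_\Delta(t))$ interior to $\Delta$ cancel among themselves, leaving a sum supported on $F$; the face pairing identifies this with the corresponding $F$-supported part of $\beta^*(\beta_{\Delta'}(t'))$, and the orientation-reversing nature of the pairing produces the sign making the two cancel. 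If $p$ is an \emph{edge} point, it runs cyclically through the simplices $\Delta_1,\dots,\Delta_r$ incident to a $1$-cell of $\T$; the terms interior to each simplex cancel internally, and the remaining face-supported terms cancel in consecutive pairs $\Delta_i,\Delta_{i+1}$ around the edge, the cycle closing up with nothing left over. (When $n=2$ only edge points occur, and this case is Neumann's identity $\beta^*\circ\beta=0$.) In every case one also notes that no vertex point arises: in $t-e+\varepsilon_{k\ell}$ the two coordinates of $\varepsilon_{k\ell}$ equal to $1$ are added to nonnegative entries of $s=t-e$, so $s+\varepsilon_{k\ell}$ has at most two vanishing coordinates and is not a vertex; thus everything genuinely lies in $C^{\fg}_1(\T)$.

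The substance of the argument is the bookkeeping in the face- and edge-point cases: one must determine precisely which of the displaced points of $\beta^*(\beta_\Delta(t))$ are interior to $\Delta$ and cancel there, which land on a shared face, and then match the latter across the gluing with the correct orientation sign; for edge points this matching is global, threading the full cyclic order of simplices around the $1$-cell, so the cancellation is not simplex-local. Some care is also needed for degenerate identifications, e.g.\ a simplex glued to itself or to a neighbour along more than one face, where a single subsimplex slot of $J^{\fg}(\T)$ or a single integral point receives several contributions; working strictly with the equivalence classes defining $C^{\fg}_1(\T)$ and using linearity handles this, but it should be spelled out. The interior-point identity, by contrast, is elementary once the $24$-term expansion is written down, and the $A_4$-symmetry keeps it short.
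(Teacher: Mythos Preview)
Your approach is correct. The paper itself gives no proof of this lemma, merely citing \cite[Prop.~7.4]{GaroufalidisGoernerZickert} and placing a \qed; your direct verification by case analysis on the type of $p$ is the natural argument and is presumably what the cited reference does. Two remarks worth making: first, your face-point computation---that $\beta^*(\beta_\Delta(t))$ for $t$ on a face is an alternating hexagon supported on that face---is exactly Lemma~\ref{lemma:HexagonRelation} of the present paper, so you have independently recovered a result the authors use later; the cancellation across the gluing then follows since an oriented triangulation has orientation-reversing face pairings, flipping the alternating signs of the hexagon. Second, your interior-point cancellation (the $24$ displaced points pairing off) is a clean closed identity in a single $\Delta^3_n$ and can indeed be checked in a few lines; the $A_4$-symmetry you invoke is helpful but not strictly necessary, as each of the twelve displacement vectors $\pm e_i\mp e_j$ appears exactly twice with opposite signs.
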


\subsection{Formulas for $\alpha$ and $\alpha^*$}

For a $0$-cell $x$ of $\T$ and a simplex $\Delta$, let 
$I_\Delta(x)\subset\{0,1,2,3\}$ be the set of vertices of $\Delta$ that are 
identified with $x$. Also, for $t\in\Delta_n^3(\Z)$ and $k\in\{1,\dots,n-1\}$, 
let 
\begin{equation}
c_{t,\Delta,k}=\left\vert\{i\in I_{\Delta}(x)\mid t_i=k\}\right\vert.
\end{equation}
Note that if $(t,\Delta)$ and $(t^\prime,\Delta^\prime)$ define the same 
integral point, then $c_{t,\Delta,k}=c_{t^\prime,\Delta^\prime,k}$. Define
\begin{equation}
\alpha\colon C_0^{\fg}(\T)\to C_1^{\fg}(\T),\qquad 
x\otimes e_k\mapsto \sum_{p}c_{t,\Delta,k}p.
\end{equation}
Also, define
\begin{equation}
\label{eq:alpha*}
\a^*\colon C^{\fg}_1(\CT) \to C^{\fg}_0(\CT), \qquad
[(t,\Delta)]\mapsto\sum_{i=0}^3 x_i\otimes e_{t_i},
\end{equation}
where $x_i$ is the $0$-cell of $\T$ defined by the $i$th vertex of $\Delta$.
It is elementary to check that $\alpha^*$ is well defined, and that it is 
the dual of $\alpha$.


\begin{lemma}
We have $\a^*\circ\b^*=0$.
\end{lemma}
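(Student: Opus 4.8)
The plan is to verify $\alpha^*\circ\beta^*=0$ by a direct computation on generators, exploiting the fact that $\beta^*(s,\varepsilon_{ij})$ is a signed sum of four integral points, each of which is a translate of $s$ by an edge vector of $\Delta^3_2$, so that applying $\alpha^*$ gives a signed sum of sixteen terms in $C^{\fg}_0(\T)$, and one checks these cancel in pairs. It suffices to treat a single subsimplex $(s,\Delta)$ and a single edge, say $\varepsilon_{01}$, since the other two edges are handled by the obvious symmetry of the formulas \eqref{eq:BetaStar} and \eqref{eq:alpha*}, and since $\beta^*$ and $\alpha^*$ act within a fixed $\Delta$.

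First I would write out $\beta^*(s,\varepsilon_{01}) = [(s+\varepsilon_{03},\Delta)] + [(s+\varepsilon_{12},\Delta)] - [(s+\varepsilon_{02},\Delta)] - [(s+\varepsilon_{13},\Delta)]$ and apply \eqref{eq:alpha*}: for a point $(t,\Delta)$ we get $\sum_{i=0}^3 x_i\otimes e_{t_i}$, where $x_i$ is the $0$-cell at the $i$th vertex of $\Delta$. Setting $t = s + \varepsilon_{ab}$ means $t_a = s_a+1$, $t_b = s_b+1$, and $t_i = s_i$ for $i\notin\{a,b\}$; here I use the identification of edges of $\Delta^3_2$ with $\dot\Delta^3_2(\Z)$, e.g.\ $\varepsilon_{03}=(1001)$, $\varepsilon_{12}=(0110)$, $\varepsilon_{02}=(1010)$, $\varepsilon_{13}=(0101)$. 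The key combinatorial observation is that across the four terms, the four chosen edges $\{03,12\}$ (with sign $+$) and $\{02,13\}$ (with sign $-$) form a ``Latin square'' pattern: at each vertex $i\in\{0,1,2,3\}$, exactly one $+$-edge and exactly one $-$-edge are incident to $i$, and moreover the $+$-edge and the $-$-edge incident to $i$ are \emph{the same two edges up to swapping which endpoint is $i$} — concretely, vertex $0$ lies on $03$ ($+$) and $02$ ($-$), vertex $1$ lies on $12$ ($+$) and $13$ ($-$), vertex $2$ lies on $12$ ($+$) and $02$ ($-$), vertex $3$ lies on $03$ ($+$) and $13$ ($-$).

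Now compare, vertex by vertex, the contribution $x_i\otimes e_{t_i}$ coming from the $+$-term containing $i$ against that from the $-$-term containing $i$. For vertex $0$: the $+$-term $(s+\varepsilon_{03})$ gives $x_0\otimes e_{s_0+1}$, and the $-$-term $(s+\varepsilon_{02})$ gives $-x_0\otimes e_{s_0+1}$, so these cancel. For vertices $i$ \emph{not} incident to the relevant edge, the coordinate $t_i$ equals $s_i$ in both the $+$- and the $-$-term, so again the contributions pair up and cancel. Carrying this out for all four vertices against all four terms accounts for all $16$ summands, each cancelling against exactly one partner, whence $\alpha^*(\beta^*(s,\varepsilon_{01}))=0$. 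The same bookkeeping, with the edge labels permuted cyclically as in \eqref{eq:BetaStar}, gives the vanishing for $\varepsilon_{12}$ and $\varepsilon_{02}$.

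The main obstacle is purely organizational: one must be careful that the identification of edges with vectors in $\dot\Delta^3_2(\Z)$ is applied consistently (so that $s+\varepsilon_{ab}$ genuinely increments exactly coordinates $a$ and $b$), and that the reader is convinced the pairing of the sixteen terms is a genuine bijection rather than an accidental-looking cancellation. An efficient way to present this cleanly — and the route I would ultimately take in writing it up — is to identify $C^{\fg}_0(\T)$ with functions and instead show $\alpha^*\circ\beta^* = (\beta\circ\alpha)^*$, i.e.\ dualize the already-established identity $\alpha^*\circ\beta^*$ being the transpose of $\beta\circ\alpha$, and then invoke the known relation that the composite $\beta\circ\alpha$ vanishes in Neumann's setting lifted subsimplex-by-subsimplex; but since the excerpt only provides $\beta^*\circ\beta=0$ and not $\beta\circ\alpha=0$ directly, the safe and self-contained choice is the explicit sixteen-term cancellation sketched above.
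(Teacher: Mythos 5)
Your proposal is correct and takes essentially the same route as the paper: the paper's proof is exactly the direct computation of $\a^*\circ\b^*(s,\varepsilon_{01})_\Delta$ as a signed sum of sixteen terms $x_i\otimes e_{t_i}$ that cancel in pairs, with the cases $\varepsilon_{12}$ and $\varepsilon_{02}$ dispatched by symmetry. Your pairing of the terms (matching each $+$-contribution at a vertex with the $-$-contribution having the same index $s_i$ or $s_i+1$) is the same cancellation the paper writes out explicitly.
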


\begin{proof} 
Let $s\in\Delta^3_{n-2}(\Z)$ be a subsimplex. We have
\begin{equation*}
\begin{aligned}
\a^* \circ \b^*(s,\varepsilon_{01})_\Delta &=
\a^*([(s+\varepsilon_{03},\Delta)])
+\a^*([(s+\varepsilon_{12},\Delta)])
\\
&
-\a^*([(s+\varepsilon_{02},\Delta)])
-\a^*([(s+\varepsilon_{13},\Delta)]) 
\\ &
=x_0\otimes e_{s_0+1}+x_1\otimes e_{s_1}+x_2\otimes e_{s_2}+x_3\otimes e_{s_3+1} 
\\ &
+ x_0\otimes e_{s_1}+x_1\otimes e_{s_1+1}+x_2\otimes e_{s_2+1}+x_3\otimes e_{s_3} 
\\ & 
-x_0\otimes e_{s_0+1}-x_1\otimes e_{s_1}-x_2\otimes e_{s_2+1}-x_3\otimes e_{s_3} 
\\ & 
- x_0\otimes e_{s_0}-x_1\otimes e_{s_1+1}-x_2\otimes e_{s_2}-x_3\otimes e_{s_3+1} 
=0
\end{aligned}
\end{equation*}
Likewise, $\a^* \circ \b^*(s,\varepsilon_{12})_\Delta
=\a^* \circ \b^*(s,\varepsilon_{02})_\Delta=0$.
\end{proof}
By duality, $\beta\circ\alpha$ is also $0$, so we have a chain complex 
$\J^{\fg}(\T)$:
\begin{equation}
\xymatrix{0 \ar[r] & C^{\fg}_0(\CT) \ar[r]^\alpha & C^{\fg}_1(\CT)\ar[r]^\beta
& J^{\fg}(\T)\ar[r]^{\beta^*} & C^{\fg}_1(\CT) \ar[r]^{\alpha^*} & 
C^{\fg}_0(\CT) \ar[r] &0}
\end{equation}
Note that when $n=2$, $\J^{\fg}$ equals $\J$.

\begin{convention}
When there can be no confusion, we shall sometimes suppress the simplex 
$\Delta$ from the notation. For example, we sometimes write $(s,e)$ instead 
of $(s,e)_\Delta$, and if $t$ is an integral point of a simplex $\Delta$ of 
$\T$, we denote the corresponding integral point of $\T$ by $[t]$ or 
sometimes just $t$ instead of $[(t,\Delta)]$.
\end{convention}


\section{Characterization of $\Im(\beta^*)$}

We develop some relations in $C_1^{\fg}(\T)/\Im(\beta^*)$ that are needed for 
computing $H_2(\J^{\fg})$. These relations may be of independent interest.

\subsection{Quad relations}

\begin{definition}
\label{def:QuadRelation}
A \emph{quadrilateral} (\emph{quad} for short) in $\Delta_n^3$ is the convex 
hull of $4$ points
\begin{equation}
p_0=a+(k,0,0,l),\quad p_1=a+(k,0,l,0),\quad p_2=a+(0,k,l,0),\quad 
p_3=a+(0,k,0,l),
\end{equation}
or the image of such under a permutation in $S_4$. Here $k,l$ are positive 
integers with $k+l\leq n$ and $a\in\Delta_{n-k-l}(\Z)$.
A quad determines a \emph{quad relation} in $C_1^{\fg}(\T)$ given by the 
alternating sum $p_0-p_1+p_2-p_3$ of its corners.
\end{definition}

Figure~\ref{fig:QuadRelations} shows $3$ quad relations for $n=4$.

\begin{lemma}
A quad relation is in the image of $\beta^*$, and is thus zero in 
$H_2(\J^{\fg})$.
\end{lemma}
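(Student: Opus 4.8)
The plan is to show that an arbitrary quad relation $p_0 - p_1 + p_2 - p_3$ coincides, up to sign, with one of the elementary quad relations $\beta^*(s,\varepsilon_{ij})_\Delta$ for a suitable subsimplex $s$ and a suitable edge $\varepsilon_{ij}$. Since $\Im(\beta^*)$ is spanned by the elementary quad relations, this immediately gives membership in $\Im(\beta^*)$, and then the fact that $\beta^*\circ\beta = 0$ (the lemma quoted from \cite{GaroufalidisGoernerZickert}) shows it is zero in $H_2(\J^{\fg}) = \Ker(\alpha^*)/\Im(\beta^*)$.

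First I would reduce to the case of a quad lying in a single simplex $\Delta$, since the quad relation is by definition already expressed in terms of integral points of one simplex (the permutation by $S_4$ acts within the simplex). Next I would match up the combinatorics: a quad with parameters $k, l$ and base point $a \in \Delta_{n-k-l}(\Z)$ should be handled by noting that when $k = l = 1$ the four corners $p_0 = a + (1,0,0,1)$, $p_1 = a + (1,0,1,0)$, $p_2 = a + (0,1,1,0)$, $p_3 = a + (0,1,0,1)$ are exactly $s + \varepsilon_{03}$, $s + \varepsilon_{02}$, $s + \varepsilon_{12}$, $s + \varepsilon_{13}$ with $s = a$, so the alternating sum $p_0 - p_1 + p_2 - p_3$ is literally $\beta^*(s,\varepsilon_{01})_\Delta$ as given in \eqref{eq:BetaStar} (possibly after applying the evident $S_4$-symmetry to land on $\varepsilon_{12}$ or $\varepsilon_{02}$ instead). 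The remaining work is the general $k, l$ case.

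For general positive $k, l$ with $k + l \le n$, the natural move is a telescoping argument: write the quad with parameters $(k,l)$ as a sum of quads with parameters $(1,l)$ translated along the first two coordinates (there are $k$ of them, at base points $a + (i, k-1-i, 0, 0)$-type shifts for $i = 0, \dots, k-1$), and then each $(1,l)$ quad as a sum of $l$ quads with parameters $(1,1)$ translated along the last two coordinates. Concretely, one checks that $p_0 - p_1 + p_2 - p_3$ for the $(k,l)$ quad equals $\sum_{i,j}$ of the alternating sums of the unit quads, because the intermediate corners cancel in pairs along each edge of the subdivision of the quadrilateral into a $k \times l$ grid of unit quadrilaterals. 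Each unit quad contributes an elementary quad relation by the previous paragraph, so the total lies in $\Im(\beta^*)$.

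The main obstacle I anticipate is bookkeeping the telescoping cleanly: one must verify that when the quadrilateral is subdivided into a grid, the shared vertices of adjacent sub-quads appear with opposite signs in the two alternating sums and hence cancel, leaving exactly the four outer corners with the original signs. This is a routine but slightly fiddly sign-chase, best organized by first doing the one-parameter telescoping in the $k$ direction (holding $l$ fixed), then the one in the $l$ direction, rather than attacking the two-dimensional grid directly. A secondary point to be careful about is the $S_4$-action: Definition~\ref{def:QuadRelation} allows the quad to be any $S_4$-image of the standard form, so one should note that permuting coordinates carries the standard quad relation to $\pm\beta^*(s,\varepsilon_{ij})$ for the correspondingly permuted edge, which is still in $\Im(\beta^*)$; the three figures \ref{fig:Quad01}, \ref{fig:Quad12}, \ref{fig:Quad02} already exhibit the three edge-types, and the remaining permutations only introduce an overall sign.
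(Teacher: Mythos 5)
Your proof is correct and takes essentially the same route as the paper: the paper records your $k\times l$ grid of unit quads as the single closed formula $p_0-p_1+p_2-p_3=\sum_{i,j}\beta^*\big(a+(k-i,i-1,j-1,l-j),\varepsilon_{01}\big)$, whose verification is exactly the telescoping cancellation you describe, with the $S_4$-images handled by the same symmetry observation. No gaps.
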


\begin{proof}
Algebraically, we have
\begin{equation}
p_0-p_1+p_2-p_3=\sum_{1<i\leq k,l<j\leq l} \beta^*\big(a+(k-i,i-1,j-1,l-j),
\varepsilon_{01}\big).
\end{equation}
For a geometric proof, note that any quad relation is a sum of the 
elementary quad relations in Figures~\ref{fig:Quad01},~\ref{fig:Quad12} 
and \ref{fig:Quad02}.
\end{proof}

\begin{figure}[htb]
\begin{center}
\begin{minipage}[c]{0.3\textwidth}
\scalebox{0.7}{\input{QuadRelations.tex}}
\end{minipage}
\hspace{0.5cm}
\begin{minipage}[c]{0.3\textwidth}
\scalebox{0.65}{\input{Hexagon.tex}}
\end{minipage}
\hspace{0.5cm}
\begin{minipage}[c]{0.3\textwidth}
\scalebox{0.65}{\input{LongHexagon.tex}}
\end{minipage}
\\
\hspace{-1cm}
\begin{minipage}[t]{0.33\textwidth}
\caption{Quad relations.}\label{fig:QuadRelations}
\end{minipage}
\begin{minipage}[t]{0.33\textwidth}
\caption{Hexagon relation.}\label{fig:HexagonRelation}
\end{minipage}
\hspace{0.2cm}
\begin{minipage}[t]{0.33\textwidth}
\caption{Long hexagon relation.}\label{fig:LongHexagonRelation}
\end{minipage}
\end{center}
\end{figure}

Recall that we have divided integral points into edge points, face points 
and interior points. We shall need a finer division.
\begin{definition}
The \emph{type} of a point $t\in\Delta_n(\Z)$ is the orbit of $t$ under the 
$S_4$ action. 
\end{definition}
Note that the type is preserved under face pairings, so it makes sense to 
define the type of an integral point $p=\{(t,\Delta)\}$ to be the type of 
any representative.

\begin{proposition}
\label{prop:SameTypeZero}
Let $p$ and $q$ be integral points of the same type. Then 
\begin{equation}
p-q\in \Im(\beta^*)+E,
\end{equation}
where $E$ is the subgroup of $C_1^{\fg}(\T)$ generated by edge points.
\end{proposition}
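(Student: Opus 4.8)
The statement says that two integral points $p,q$ of the same $S_4$-type differ, modulo $\Im(\beta^*)$ and modulo edge points, by zero. My plan is to reduce the general case to a transitivity argument: it suffices to exhibit, for each \emph{generating move} of the $S_4$-action on $\Delta_n^3(\Z)$ (equivalently, for $t$ and $\sigma(t)$ with $\sigma$ a transposition), an explicit element of $\Im(\beta^*)+E$ equal to $[(t,\Delta)]-[(\sigma(t),\Delta)]$ — or more precisely to connect $t$ to $\sigma(t)$ by a chain of points, each consecutive pair differing by a quad relation (which lies in $\Im(\beta^*)$ by the preceding lemma) together with a correction supported on edge points. Since $S_4$ is generated by transpositions and face pairings preserve type and act trivially on $C_1^{\fg}(\T)/{\sim}$ in the relevant sense, this handles all same-type pairs by composing moves.

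\textbf{Key steps.} First I would fix a simplex $\Delta$ and a pair $t,t'=\sigma t$ for $\sigma$ one of the three "adjacent" transpositions, say $(0\,1)$, $(1\,2)$, $(2\,3)$, noting these generate $S_4$. Second, I would build a ``straight-line'' path in $\Delta_n^3(\Z)$ from $t$ to $t'$ moving one unit of mass at a time between the two swapped coordinates; each unit step $u\to u''$ changes two coordinates by $\pm1$, and I would recognize $[u]-[u'']$ (up to sign) as appearing in the boundary of an elementary quad relation $\beta^*(s,\varepsilon_{ij})$ for a suitable subsimplex $s$ — exactly the configurations drawn in Figures~\ref{fig:Quad01},~\ref{fig:Quad12},~\ref{fig:Quad02}. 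Telescoping along the path, the interior terms cancel in pairs and one is left with $[(t,\Delta)]-[(t',\Delta)]$ plus a boundary term; the third step is to check that whenever a point produced along the path happens to be a vertex point (which has no class in $C_1^{\fg}$, since $C_1^{\fg}(\T)$ is free on \emph{non-vertex} integral points) the defect is absorbed into $E$: the only points along such a path that can be degenerate are those lying on an edge of $\Delta$, hence edge points, so the correction genuinely lies in $E$. Fourth, I would observe the argument is $\Delta$-local and compatible with face pairings, so it descends to integral points of $\T$.

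\textbf{Main obstacle.} The delicate point is bookkeeping the edge-point corrections: when $t$ is itself an edge or face point, the ``path'' of quad relations connecting $t$ to $\sigma t$ may pass through the boundary strata of the simplex, where some corners of the quads degenerate to edge points or even vertex points, and one must argue carefully that (i) vertex-point corners never carry a nonzero contribution because they are not generators of $C_1^{\fg}(\T)$ — so they silently vanish, and this is consistent precisely because $\beta^*$ is well-defined into the free module on non-vertex points — while (ii) all remaining stray terms are supported on edge points and hence lie in $E$. In other words, the real content is not the algebraic telescoping (that is routine once the quad relations are in hand) but the case analysis near $\partial\Delta$ ensuring the ``error'' never escapes $\Im(\beta^*)+E$. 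I would organize this by stratifying according to how many coordinates of $t$ are zero and treating the interior, face, and edge cases in turn, using symmetry under $S_4$ to reduce the number of cases.
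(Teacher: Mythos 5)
Your reduction to transpositions is reasonable in spirit, but the telescoping step does not deliver what you claim, and this is a genuine gap. Summing the elementary quad relations $\beta^*(s,\varepsilon_{01})$ along your straight-line path from $t$ to $t'=(01)\cdot t$ produces the four corners of a (non-elementary) quad: you obtain $[t]-[t']$ \emph{plus} the difference of the two points obtained from $t$ and $t'$ by a unit transfer between the complementary coordinates $2,3$. That leftover is not supported on edge points. Indeed, every elementary quad relation containing a given unit $01$-transfer $[a+e_0]-[a+e_1]$ is of the form $\beta^*(s,\varepsilon_{01})$, and its other two terms are the \emph{same} $01$-transfer with anchor shifted by $\pm(e_2-e_3)$; so no matter how you choose and chain the quads, the best you can do is push the residual corners to $(t_0,t_1,t_2+t_3,0)$ and $(t_1,t_0,t_2+t_3,0)$, which are face points (not edge points) whenever $t_0,t_1>0$. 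Your ``boundary term lies in $E$'' claim is therefore false precisely in the generic case where both swapped coordinates are positive; the case analysis near $\partial\Delta$ that you flag as the main obstacle is not where the difficulty sits. The missing idea is to re-route the permutation: after reducing to face points (which your quad strip does accomplish), realize any permutation of a face point as a product of transpositions each of which \emph{moves the zero coordinate}. For such a transposition the quad whose complementary transfer concentrates the remaining two coordinates has its extra corners carrying two zeros, hence lying in $E$; and since these ``swap with the blank'' moves on the complete graph on four positions generate all of $S_4$ on the orbit, one connects any two same-type face points. This is exactly the structure of the paper's proof: its first displayed quad relation reduces interior points to face points, and the remaining ones are the ``swap with the zero coordinate'' moves.

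A secondary issue: two interior points of the same type lying in \emph{different} simplices are not related by any face pairing (face pairings identify only boundary points), so your step four does not connect them by ``composing moves''; this too is repaired only by first reducing to face points, which are shared between adjacent simplices. By contrast, your worry about vertex points is moot: every term of $\beta^*(s,\varepsilon_{ij})$ is of the form $s+\varepsilon_{kl}$ and has at least two positive coordinates, so vertex points never occur as corners of quad relations.
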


\begin{proof}
We may assume that $p$ and $q$ lie in the same simplex. The quad relation 
(together with similar relations obtained by permutations)
\begin{equation}
(a_1,a_0,a_2,a_3)+(a_0,a_1,a_2+a_3,0)-(a_1,a_0,a_2+a_3,0)-(a_0,a_1,a_2,a_3)
\end{equation} 
shows that the difference between two interior points is equal modulo 
$\Im(\beta^*)$ to the difference between two face points.
Similarly, the relation
\begin{equation}
(a,b,0,c)-(a,b,c,0)+(0,a+b,c,0)-(0,a+b,0,c)
\end{equation}
shows that the difference between two face points in distinct faces is in 
$\Im(\beta^*)+E$.
Finally, the two quad relations
\begin{equation}
\begin{gathered}
(0,a,b,c)=(a,0,b,c)+(0,a,0,b+c)-(a,0,0,c+b),\\
(0,a,c,b)=(a,0,c,b)+(0,a,b+c,0)-(a,0,b+c,0)
\end{gathered}
\end{equation}
in $C_1^{\fg}(\T)/\Im(\beta^*)$ imply that the difference between two face 
points in the same face is also in $\Im(\beta^*)+E$. This concludes the proof.
\end{proof}

\subsection{Hexagon relations}

Besides the quad relations, we shall need further relations that lie entirely 
in a face.

\begin{lemma}
\label{lemma:HexagonRelation} 
For any face point $t$, the element $\beta^*\big(\sum_{s+e=t}(s,e)\big)$ is 
an alternating sum of the corners of a hexagon with center at $t$ (see 
Figure~\ref{fig:HexagonRelation}). 
\end{lemma}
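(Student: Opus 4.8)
\textbf{Proof plan for Lemma~\ref{lemma:HexagonRelation}.}
The plan is to compute $\beta^*\big(\sum_{s+e=t}(s,e)\big)$ directly from the formulas for $\beta^*$ in~\eqref{eq:BetaStar} and identify the resulting alternating sum with the corners of a hexagon. First I would fix a representative $(t,\Delta)$ with $t$ a face point, say (after applying a permutation in $S_4$, which is harmless by equivariance) a point in the face opposite vertex $3$, so $t=(t_0,t_1,t_2,0)$ with $t_0,t_1,t_2\geq 1$ and $t_0+t_1+t_2=n$. The subsimplices $s$ with $s+e=t$ for some edge $e\in\dot\Delta_2^3(\Z)$ are exactly those $s=t-e$; since $s$ must lie in $\Delta_{n-2}^3(\Z)$, only those edges $e$ with $t-e\geq 0$ contribute, and because $t_3=0$ the edges involving vertex $3$ (namely $\varepsilon_{03},\varepsilon_{13},\varepsilon_{23}$) are excluded. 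So the sum runs over $e\in\{\varepsilon_{01},\varepsilon_{12},\varepsilon_{02}\}$ with corresponding subsimplices $s=t-\varepsilon_{01}$, $t-\varepsilon_{12}$, $t-\varepsilon_{02}$.

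Next I would substitute these three pairs $(s,e)$ into the three lines of~\eqref{eq:BetaStar} and collect terms. Each application of $\beta^*$ produces four integral points with signs $+,+,-,-$; since the edges incident to vertex $3$ have been removed, each elementary quad relation $\beta^*(s,\varepsilon_{ij})$ here degenerates and contributes only the two terms not involving vertex $3$ (one with sign $+$, one with sign $-$) — or one should check that the vertex-$3$ terms from the three contributions cancel in pairs. Carrying out the bookkeeping, the six surviving terms are the six integral points obtained from $t$ by moving one unit along each of the three "directions" $e_0-e_1$, $e_1-e_2$, $e_0-e_2$ and their negatives, i.e. the points $t\pm(\varepsilon_{01})$-type shifts within the face; these are precisely the six corners of a hexagon centered at $t$, with the three $+$ signs and three $-$ signs alternating around the hexagon. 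This matches Figure~\ref{fig:HexagonRelation}.

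The main obstacle — really the only delicate point — is the sign bookkeeping: one must verify that the alternating signs coming out of the three separate applications of $\beta^*$ are globally consistent, i.e. that going around the hexagon the signs genuinely alternate $+,-,+,-,+,-$ rather than clashing, and that all terms involving the "missing" vertex cancel exactly. I would organize this by writing the six corners explicitly as $t+\varepsilon_{ij}-\varepsilon_{ik}$ for the appropriate index pairs and tracking which of the three summands produces each corner with which sign; a short table suffices. One should also address the boundary/degenerate cases where $t$ lies on an edge of the face (some $t_i=1$), where the hexagon partially collapses onto edge points — but this is exactly the situation that Proposition~\ref{prop:SameTypeZero} is designed to absorb, so for the purposes of $H_2(\J^{\fg})$ it is enough to note that the hexagon relation holds verbatim and its degenerations are accounted for by $E$. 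Finally I would remark that by $S_4$-equivariance of $\beta^*$ the same conclusion holds for a face point in any of the four faces, completing the proof.
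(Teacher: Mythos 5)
Your plan follows the paper's proof exactly: reduce to $t=(t_0,t_1,t_2,0)$ by rotational symmetry, expand $\beta^*(t-\varepsilon_{01},\varepsilon_{01})+\beta^*(t-\varepsilon_{12},\varepsilon_{12})+\beta^*(t-\varepsilon_{02},\varepsilon_{02})$ via~\eqref{eq:BetaStar}, and observe that the terms with nonzero last coordinate cancel in pairs, leaving the alternating sum over the six hexagon corners. One small correction: the individual quad relations do not ``degenerate'' --- each still contributes all four terms, two of which (e.g.\ $s+\varepsilon_{03}$ and $s+\varepsilon_{13}$) are genuine integral points off the face $t_3=0$ --- so the cancellation-in-pairs alternative you hedge with is the one that actually occurs.
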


\begin{proof}
By rotational symmetry, we may assume that $t=(t_0,t_1,t_2,0)$. We thus have
\begin{equation}
\label{eq:wFormula}
\beta^*\big(\sum_{s+e=t}(s,e)\big)=
\beta^*(t-\varepsilon_{01},\varepsilon_{01})
+\beta^*(t-\varepsilon_{12},\varepsilon_{12})
+\beta^*(t-\varepsilon_{02},\varepsilon_{02}).
\end{equation}
Using the formula~\eqref{eq:BetaStar} for $\beta^*$, \eqref{eq:wFormula} 
easily simplifies to 
\begin{multline}
\beta^*\big(\sum_{s+e=t}(s,e)\big)
=-[t+(-1,1,0,0)]+[t+(-1,0,1,0)]-[t+(0,-1,1,0)]\\
+[t+(1,-1,0,0)]-[t+(1,0,-1)]+[t+(0,1,-1,0)].
\end{multline}
This corresponds to the configuration in Figure~\ref{fig:HexagonRelation}.
\end{proof}

\begin{definition}
An element as in Lemma~\ref{lemma:HexagonRelation} is called a 
\emph{hexagon relation}. By taking sums of hexagon relation, we obtain 
relations as shown in Figure~\ref{fig:LongHexagonRelation}. We refer to 
these as \emph{long hexagon relations} (a hexagon relation is also regarded 
as a long hexagon relation). 
\end{definition}


\section{The outer homology groups}
\label{sec:OuterHomology}

We focus on here on the computation of $H_1(\J^{\fg})$ and $H_2(\J^{\fg})$; 
the computation of $H_5(\J^{\fg})$ and $H_4(\J^{\fg})$ will follow by a duality 
argument (see Section~\ref{sec:UniversalCoeff}).


\subsection{Computation of $H_1(\J^{\fg})$}

\begin{proposition}
\label{prop:H1}
$H_1(\J^{\fg})=\Z/n\Z$.
\end{proposition}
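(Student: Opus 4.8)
The plan is to compute $H_1(\J^{\fg}) = C_0^{\fg}(\T)/\Im(\alpha^*)$ directly, by understanding the image of the map $\alpha^* \colon C_1^{\fg}(\T) \to C_0^{\fg}(\T)$, which sends $[(t,\Delta)] \mapsto \sum_{i=0}^3 x_i \otimes e_{t_i}$. Recall $C_0^{\fg}(\T) = C_0(\T)\otimes\Z^{n-1}$, generated by symbols $x\otimes e_k$ for $0$-cells $x$ and $k\in\{1,\dots,n-1\}$, with the convention $e_0 = e_n = 0$. The first step is to show that the cokernel is generated by a single element and is killed by $n$; the natural candidate for the generator is the class of $x\otimes e_1$ for any fixed vertex $x$ (all of which will turn out to be equal in the quotient), and the candidate isomorphism $H_1(\J^{\fg}) \to \Z/n\Z$ should send $x\otimes e_k \mapsto k \pmod n$.

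First I would verify that this assignment $\phi\colon x\otimes e_k \mapsto k$ descends to a well-defined map $C_0^{\fg}(\T) \to \Z/n\Z$ vanishing on $\Im(\alpha^*)$: for any integral point $t = (t_0,t_1,t_2,t_3)$ with $\sum t_i = n$, we have $\phi(\alpha^*[(t,\Delta)]) = \sum_i t_i = n \equiv 0$, using that $\phi(x\otimes e_{t_i}) = t_i$ even when $t_i \in \{0,n\}$ since then $e_{t_i} = 0$ and indeed $t_i \equiv 0$. So $\phi$ induces a surjection $H_1(\J^{\fg}) \to \Z/n\Z$ (surjectivity because face/interior points of suitable types realize all residues, or more simply because the edge point $(1,0,\dots)$-type with $t$-entries achieving $e_k$ gives $k$; even more directly, $\phi(x\otimes e_1) = 1$). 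The remaining and genuinely substantive step is to prove injectivity, i.e.\ that $C_0^{\fg}(\T)/\Im(\alpha^*)$ has order \emph{at most} $n$. For this I would work relation by relation. Using edge points of $\T$: an edge $1$-cell between $0$-cells $x$ and $y$, realized by a point $t$ with two coordinates equal to $k$ and $n-k$ and the other two zero (placed at the vertices mapping to $x,y$), gives the relation $x\otimes e_k + y\otimes e_{n-k} = 0$ in the quotient, i.e.\ $x\otimes e_k = -y\otimes e_{n-k}$. Combining two such relations along a path, and using that $\widehat M$ (equivalently, the $1$-skeleton modulo the vertex identifications) is connected, one shows every $x\otimes e_k$ is expressible in terms of a single $\bar x\otimes e_1$ for a fixed base $0$-cell $\bar x$: first $x\otimes e_k$ relates to $\bar x\otimes e_j$ for various $j$, and then interior/face points give relations like $x\otimes e_k = x\otimes e_{k-1} + (\text{lower})$ allowing one to reduce the index; the relation $\phi$ then pins the scalar, giving $x\otimes e_k \equiv k\cdot(\bar x\otimes e_1)$ modulo $\Im(\alpha^*)$, and finally a relation forcing $n\cdot(\bar x\otimes e_1) = 0$ (coming from an integral point concentrated so that $\alpha^*$ outputs $n$ copies worth at one vertex, e.g.\ via $(n-1,1,0,0)$-type points telescoping, or directly from the vertex-point-adjacent relations).

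The main obstacle I expect is the bookkeeping in the injectivity half: one must show that the relations coming from $\alpha^*$ of \emph{all} integral points (not just edge points) collapse $C_0^{\fg}(\T)$ down to exactly $\Z/n\Z$ and no further — in particular that the $n$ distinct residues really stay distinct, which is exactly what the map $\phi$ guarantees, so the crux is only the "at most $n$" bound. The cleanest route is probably: (1) pick a base $0$-cell $\bar x$; (2) show connectivity of $\widehat{M}$ lets every $x\otimes e_k$ be moved to a combination of $\bar x\otimes e_j$'s using edge-point relations $x\otimes e_k = -y\otimes e_{n-k}$; (3) use interior-point relations within a single simplex — for instance $\alpha^*$ of $(a,b,c,d)$ versus $(a-1,b+1,c,d)$ type points at a simplex all of whose vertices are identified to $\bar x$, or more carefully relations among points in one simplex — to establish $\bar x\otimes e_j = j\,(\bar x\otimes e_1)$ for all $j$ and $n(\bar x\otimes e_1) = 0$; (4) conclude the quotient is cyclic of order dividing $n$, hence exactly $\Z/n\Z$ by $\phi$. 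One subtlety to handle with care is the degenerate index convention $e_0 = e_n = 0$, which must be used consistently when an integral point has a coordinate equal to $0$ or $n$ (i.e.\ lies on a face of the simplex), since those terms simply drop out of $\alpha^*$; this is what makes the relations "telescope" correctly and is the key compatibility that makes the order exactly $n$ rather than something larger.
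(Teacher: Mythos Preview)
Your proposal is correct and follows essentially the same approach as the paper: both define the map $\phi=\epsilon\colon x\otimes e_k\mapsto k\in\Z/n\Z$, check it kills $\Im(\alpha^*)$, and then use edge-point relations $x\otimes e_k+y\otimes e_{n-k}=0$ (and their even-path consequence $x\otimes e_k=y\otimes e_k$) together with face-point relations to collapse the quotient. The paper packages the second half as a minimal-counterexample reduction (given $\sigma\in\Ker(\epsilon)$ with fewest terms, subtract $\alpha^*$ of a suitably chosen face point to shorten it), whereas you phrase it as showing the quotient is cyclic with $\bar x\otimes e_j=j\,(\bar x\otimes e_1)$ and $n(\bar x\otimes e_1)=0$; these are the same computation in different clothing, and your inductive step $\bar x\otimes e_k=\bar x\otimes e_{k-1}+\bar x\otimes e_1$ comes out immediately from combining the edge relation $\bar x\otimes e_k+\bar x\otimes e_{n-k}=0$ with the face relation $\bar x\otimes e_{k-1}+\bar x\otimes e_1+\bar x\otimes e_{n-k}=0$. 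One small caution: do not assume there is a simplex all of whose vertices are identified to $\bar x$; instead use the identity $x\otimes e_k=\bar x\otimes e_k$ (already established from edge paths) to transport face-point relations from an arbitrary simplex to relations among the $\bar x\otimes e_j$.
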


\begin{proof}
Consider the map 
$$
\ep\colon C^{\fg}_0(\CT) \to \BZ/n, \qquad x\otimes e_k\mapsto k.
$$
One easily checks that $\Im(\a^*) \subset
\Ker(\e)$. To prove the other inclusion, let $[\sigma]\in H_1(\J)$, and 
$\sigma=\sum_{i=1}^N\varepsilon_i x_i\otimes e_{k_i}$ a representative with 
$N$ minimal and $\varepsilon_i=\pm 1$ . We wish to prove that $N=0$, so 
suppose $N>0$. We start by showing that modulo $\Im(\a^*)$
\begin{equation}
\label{eq:Auxiliary}
x\otimes e_k +y\otimes e_{n-k}=0,\qquad x\otimes e_k-y\otimes e_k=0.
\end{equation}
Pick an edge path of odd length between $x$ and $y$ with vertices 
$x_0=x,x_1,\dots,x_{2k-1}=y$. For $z,w$ vertices joined by an edge $e$, let 
$(z,w;k)$ be the edge point on $e$ at distance $k$ from $w$. Then 
$\alpha^*(z,w;k)=z\otimes e_k+w\otimes e_{n-k}$. We thus have
\begin{equation}
x\otimes e_k+y\otimes e_{n-k}=\alpha^*\big((x,x_1;k)-(x_1,x_2;n-k)
+\dots+(x_{2k-2},y;k)\big).
\end{equation}
This proves the first equation in~\eqref{eq:Auxiliary}. The second follows 
similarly by considering an edge path of even length.
We may thus assume that $N\geq 3$. Without loss of generality 
$\varepsilon_0=1$, so that
\begin{equation}
\sigma=x_0\otimes e_{k_0}+\varepsilon_1 x_1\otimes e_{k_1}
+\sum_{i=2}^n\varepsilon_i x_i\otimes e_{k_i}.
\end{equation}

Using~\eqref{eq:Auxiliary}, we may assume that $k_0+k_1\leq n$ if 
$\varepsilon_1=1$ and $k_1\geq k_0$ if $\varepsilon_1=-1$.
Fix three $0$-cells $x,y,z$ lying on a face, and let $p$ be the unique 
integral point satisfying
\begin{equation}
\alpha^*(p)=\begin{cases} 
x\otimes e_{k_0}+y\otimes e_{k_1}+z\otimes e_{n-k_0-k_1}&
\text{ if }\varepsilon_1=1\\   
x\otimes e_{k_0}+y\otimes e_{n-k_1}+z\otimes e_{k_1-k_0}&
\text{ if }\varepsilon_1=-1.
\end{cases}
\end{equation}
Subtracting $\alpha^*(p)$ from $\sigma$ and using \eqref{eq:Auxiliary}, we 
can thus construct a representative of $[\sigma]$ with fewer than $N$ terms, 
contradicting minimality of $N$. Hence, $\sigma=0$.
\end{proof}


\subsection{Computation of $H_2(\J^{\fg})$}

In this section we prove that $H_2(\J^{\fg})=H_1(\widehat M;\Z/n\Z)$. The 
fact that $H_2(\J^{\fg})$ is torsion is crucial, and is used in the proof of 
Proposition~\ref{prop:SameRank}. We see no way of proving that $H_2(\J^{\fg})$ 
is torsion without computing it explicitly.

We assume for convenience that the triangulation is ordered, i.e.~that the 
face pairings are order preserving. The general case differs only in notation.
Let $\varepsilon_{ij}^{\ori}$ denote the \emph{oriented} edge (from $i$ to $j$) 
between $i$ and $j$.

\subsubsection{Definition of a map 
$\nu\colon H_2(\J^{\fg})\to H_1(\widehat M;\Z/n\Z)$}

Consider the map 
\begin{equation}
\nu\colon \Z[\dot \Delta^3_n(\Z)]\to C_1(\Delta^3;\Z/n\Z),\qquad 
(t_0,t_1,t_2,t_3)\mapsto 
t_1\varepsilon^{\ori}_{01}+t_2\varepsilon^{\ori}_{02}+t_3\varepsilon^{\ori}_{03}.  
\end{equation}
Modulo boundaries in $C_1(\Delta^3;\Z/n\Z)$, we have
\begin{equation}
t_1\varepsilon^{\ori}_{01}+t_2\varepsilon^{\ori}_{02}+t_3\varepsilon^{\ori}_{03}
=t_0\varepsilon^{\ori}_{10}+t_2\varepsilon^{\ori}_{12}+t_3\varepsilon^{\ori}_{13}
=t_0\varepsilon^{\ori}_{20}+t_1\varepsilon^{\ori}_{21}+t_3\varepsilon^{\ori}_{23}
=t_0\varepsilon^{\ori}_{30}+t_1\varepsilon^{\ori}_{31}+t_2\varepsilon^{\ori}_{32}.
\end{equation}

\begin{lemma} 
The map
\begin{equation}
\label{eq:NuDefinition}
\nu\colon C_1^{\fg}(\T)\to C_1(\widehat M;\Z/n\Z)\big/\{\text{boundaries}\}
\end{equation}
induced by~\eqref{eq:NuDefinition} takes cycles to cycles and boundaries to $0$.
\end{lemma}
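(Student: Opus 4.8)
The statement has two parts: (i) the map $\nu$ on $C_1^{\fg}(\T)$ is well-defined into $C_1(\widehat M;\Z/n\Z)/\{\text{boundaries}\}$, and (ii) it sends $\Ker(\beta)$-cycles (i.e.\ $z\in C_1^{\fg}(\T)$ with $\beta(z)$ of a suitable form — here ``cycles'' means elements of $\Ker(\alpha^*\circ\cdots)$, i.e.\ elements landing in $H_2$, so $\beta$-boundaries from $C_0^{\fg}$) to cycles in $C_1(\widehat M;\Z/n\Z)$, and sends the image of $\beta^*$ to $0$. For (i), the point is that $\nu$ as defined on $\Z[\dot\Delta^3_n(\Z)]$ depends on the ordering of $\Delta^3$, but the four displayed expressions show that modulo boundaries (the boundary of the $2$-cell $\varepsilon_{ijk}$ is $\varepsilon_{jk}-\varepsilon_{ik}+\varepsilon_{ij}$, read with multiplicities $t_i$) all four vertex-based formulas agree; hence $\nu$ is insensitive to which vertex we single out, and since a face pairing of the ordered triangulation identifies integral points by an order-preserving affine map, $\nu$ descends to $C_1^{\fg}(\T)$. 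I would write this out by checking that $t_1\varepsilon^{\ori}_{01}+t_2\varepsilon^{\ori}_{02}+t_3\varepsilon^{\ori}_{03} - (t_0\varepsilon^{\ori}_{10}+t_2\varepsilon^{\ori}_{12}+t_3\varepsilon^{\ori}_{13})$ equals $(t_0+t_1)\partial(\text{a } 2\text{-cell})$ using $t_0+t_1+t_2+t_3=n\equiv 0$.

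For the statement that $\nu$ takes cycles to cycles: a cycle in $C_1^{\fg}(\T)$ representing a class in $H_2(\J^{\fg})$ is an element $z$ with $\beta(z)=0$ — wait, more precisely an element of $\Ker\big(\beta\colon C_1^{\fg}\to J^{\fg}\big)$ that we push forward; actually in Neumann's indexing $H_2(\J^{\fg})$ sits between $C_1^{\fg}$ and $J^{\fg}$, so a ``cycle'' is $z\in C_1^{\fg}(\T)$ with $\beta(z)=0$ (it's in the kernel on the appropriate side — here we want $\Ker(\beta)$ after dualization, but reading the complex \eqref{eq:Nchain} the relevant cycles for $H_2$ are elements of $\Ker(\text{the outgoing map from }C_1^{\fg})$, which is $\beta^*$; so I must be careful about which copy of $C_1^{\fg}$). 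Reading \eqref{eq:Nchain}, $H_2(\J^{\fg})$ is computed at the \emph{second} $C^{\fg}_1(\CT)$, with incoming map $\beta^*$ and outgoing map $\alpha^*$. So a cycle is $z$ with $\alpha^*(z)=0$, and a boundary is $z\in\Im(\beta^*)$. Thus I must show: (a) $\alpha^*(z)=0 \implies \partial\nu(z)=0$ in $C_0(\widehat M;\Z/n\Z)$, and (b) $\nu(\beta^*(s,\varepsilon_{ij}))=0$ modulo boundaries. For (a): $\partial\nu(t_0,\dots,t_3) = \partial(t_1\varepsilon^{\ori}_{01}+t_2\varepsilon^{\ori}_{02}+t_3\varepsilon^{\ori}_{03}) = (t_1+t_2+t_3)[x_0] - t_1[x_1]-t_2[x_2]-t_3[x_3]$, and using $n\equiv 0$ this is $-t_0[x_0]-t_1[x_1]-t_2[x_2]-t_3[x_3] \equiv -\alpha^*(t,\Delta)$ after identifying $C_0(\widehat M;\Z/n)$ with $C_0^{\fg}(\T)\otimes\cdots$ — more carefully, $\alpha^*[(t,\Delta)]=\sum_i x_i\otimes e_{t_i}$ and $C_0^{\fg}=C_0\otimes\Z^{n-1}$, while $C_0(\widehat M;\Z/n)=C_0\otimes\Z/n$ with the map $\Z^{n-1}\to\Z/n$, $e_k\mapsto k$; so $\partial\nu = -(\text{this composite})\circ\alpha^*$. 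Hence $\alpha^*(z)=0\implies \partial\nu(z)=0$.

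For part (b), that $\nu$ kills $\Im(\beta^*)$: I compute $\nu$ on the three elementary quad relations using the explicit formula \eqref{eq:BetaStar}. Take $\beta^*(s,\varepsilon_{01})=[(s+\varepsilon_{03})]+[(s+\varepsilon_{12})]-[(s+\varepsilon_{02})]-[(s+\varepsilon_{13})]$; applying $\nu$ (choosing the vertex-$0$ formula, legitimate by part (i)) and writing $s=(s_0,s_1,s_2,s_3)$, the four terms contribute coordinate-shifted versions of $s_1\varepsilon^{\ori}_{01}+s_2\varepsilon^{\ori}_{02}+s_3\varepsilon^{\ori}_{03}$ with $\pm 1$ bumps in two of the slots; the linear (in the $t_i$) part of $\nu$ makes the four-term alternating sum telescope to $0$ on the nose, with no boundary correction even needed. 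The same holds for $\varepsilon_{12}$ and $\varepsilon_{02}$ by the rotational symmetry already exploited in Lemma~\ref{lemma:HexagonRelation}, or by a direct identical computation. I expect part (b) to be essentially bookkeeping; \textbf{the main obstacle} is getting the well-definedness in part (i) exactly right — tracking that the boundary ambiguity introduced by choosing a basepoint vertex is precisely compensated by the relation $n\equiv 0$, and confirming this is compatible with (order-preserving) face pairings so that $\nu$ genuinely factors through $C_1^{\fg}(\T)$ rather than just $\Z[\dot\Delta^3_n(\Z)]$ per simplex. Once (i) is secure, (a) and (b) are short computations with $\alpha^*$ and $\beta^*$ respectively.
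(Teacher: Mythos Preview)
Your proposal is correct and follows essentially the same approach as the paper: for cycles to cycles, the paper builds the commutative square with $\nu_0(x\otimes e_i)=ix$ and verifies $\partial\circ\nu=\nu_0\circ\alpha^*$ (your computation, up to a sign convention in $\partial$); for boundaries to $0$, the paper computes $\nu(\beta^*(s,\varepsilon_{01}))=0$ directly and invokes rotational symmetry for the other generators, exactly as you outline. Your worry about well-definedness (your part (i)) is handled in the paper by the displayed equalities just before the lemma, so it is not really the main obstacle; once you correctly identified that $H_2$ sits at the second $C_1^{\fg}$ with $\alpha^*$ outgoing and $\beta^*$ incoming, your plan matches the paper's proof.
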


\begin{proof}
To see that cycles map to cycles consider the diagram
\begin{equation}
\label{eq:CyclesToCyclesNu}
\cxymatrix{{C_1^{\fg}(\T)\ar[r]^-{\nu}\ar[d]^-{\alpha^*}&
C_1(\widehat M;\Z/n\Z)\big/\{\text{boundaries}\}\ar[d]^-{\partial}\\
C_0^{\fg}(\T)\ar[r]^-{\nu_0}&C_0(\widehat M;\Z/n\Z),}}
\end{equation}
where $\nu_0$ is the map given by
\begin{equation}
\nu_0\colon C_0^{\fg}(\T)\to C_0(\widehat M;\Z/n\Z),\qquad 
x\otimes e_i\mapsto ix.
\end{equation}
We must prove that~\eqref{eq:CyclesToCyclesNu} is commutative. This follows 
from
\begin{multline}
\partial(\nu(t))=\partial(t_1\varepsilon^{\ori}_{01}+t_2\varepsilon^{\ori}_{02}
+t_3\varepsilon^{\ori}_{03})=\\
t_1(x_1-x_0)+t_2(x_2-x_0)+t_3(x_3-x_0)=t_0x_0+t_1x_1+t_2x_2+t_3x_3=\alpha^*(t).
\end{multline}

We must check that image of $J^{\fg}(\T)$ maps to $0$. By rotational symmetry, 
it is enough to prove that $\nu$ takes $\beta^*(s,\varepsilon_{01})$ to $0$.
Using~\eqref{eq:BetaStar} we have
\begin{multline}
\nu\big(\beta^*(s,\varepsilon_{01})\big)
=\big(s_1\varepsilon^{\ori}_{01}+s_2\varepsilon^{\ori}_{02}
+(s_3+1)\varepsilon^{\ori}_{03}\big)+\big((s_1+1)\varepsilon^{\ori}_{01}
+(s_2+1)\varepsilon^{\ori}_{02}+s_3\varepsilon^{\ori}_{03}\big)\\
-\big(s_1\varepsilon^{\ori}_{01}+(s_2+1)\varepsilon^{\ori}_{02}
+s_3\varepsilon^{\ori}_{03}\big)-\big((s_1+1)\varepsilon^{\ori}_{01}
+s_2\varepsilon^{\ori}_{02}+(s_3+1)\varepsilon^{\ori}_{03}\big)=0.
\end{multline}
This concludes the proof.
\end{proof}
Hence, $\nu$ induces a map 
\begin{equation}
\nu\colon H_2(\J^{\fg})\to H_1(\widehat M;\Z/n\Z).
\end{equation}

\subsubsection{Construction of a map 
$\mu\colon H_1(\widehat M;\Z/n\Z)\to H_2(\J^{\fg})$}

We prove that $\nu$ is an isomorphism by constructing an explicit inverse. 
Let $k\in\{1,2,\dots,n-1\}$. 

\begin{definition} 
Let $e$ be an oriented edge of $\T$. If $f$ is a face 
containing $e$, the path consisting of the two other edges in $f$ is called 
a \emph{tooth} of $e$.
\end{definition}

Given a tooth $T_e$ of an edge $e$, let $\mu_k(e)_{T_e}\in C_1^{\fg}$ be the 
element shown in Figure~\ref{fig:Tooth}. 

\begin{lemma}
\label{lemma:FlipTeeth}
For any two teeth $T_e$ and $T_e^{\prime}$ of $e$, we have
\begin{equation}
\mu_k(e)_{T_e}=\mu_k(e)_{T_e^{\prime}}\in C_1^{\fg}(\T)\big/\Im(\beta^*).
\end{equation}
\end{lemma}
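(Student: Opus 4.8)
The plan is to reduce the statement about comparing the two teeth $T_e$ and $T_e'$ to a sequence of elementary moves, each of which changes $\mu_k(e)_{T_e}$ by something in $\Im(\beta^*)$. First I would set up the local picture: a tooth of $e$ is a path of two edges living in a single face $f$ containing $e$, and $\mu_k(e)_{T_e}$ is an explicit $\Z^{n-1}$-decorated $1$-chain supported on the integral points lying on those two edges (as in Figure~\ref{fig:Tooth}). Since all the faces containing $e$ are cyclically arranged around $e$ (in the oriented triangulation, each edge is contained in a cycle of faces, glued in pairs along the faces adjacent to $e$), it suffices to treat the case where $T_e$ and $T_e'$ differ by passing from one face $f$ to an adjacent face $f'$ across an edge $e'' \ne e$ of $f$ incident to an endpoint of $e$. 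In other words, the general statement follows by transitivity once I prove it for a single ``flip across one face.''

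The key step is then the local computation: express the difference $\mu_k(e)_{T_e} - \mu_k(e)_{T_e'}$, which is a $1$-chain supported on the three edges of a single face (the two teeth share the edge $e$ but the geometry near that face involves only finitely many integral points), as an explicit sum of hexagon relations and long hexagon relations in that face, together with elementary quad relations $\beta^*(s,\varepsilon_{ij})$ where needed. Concretely, inside the face $f$ one has the triangular array of integral points, and the difference of the two teeth should telescope as an alternating sum running around the boundary of $f$; Lemma~\ref{lemma:HexagonRelation} shows each such hexagon-shaped alternating sum is $\beta^*$ of a sum of three edges of subsimplices, and Figure~\ref{fig:LongHexagonRelation} shows that stacking these gives the long hexagon relations one actually needs to go from one side of the face to the other. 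So the strategy is: write both $\mu_k(e)_{T_e}$ and $\mu_k(e)_{T_e'}$ as chains on $C_1^{\fg}$, subtract, and recognize the result as a long hexagon relation (hence in $\Im(\beta^*)$) running across the face $f$ from the tooth $T_e$ to the tooth $T_e'$.

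I would organize the proof as follows. (1) State precisely, with reference to Figure~\ref{fig:Tooth}, the formula for $\mu_k(e)_{T_e}$ as a chain on the integral points of the two edges of the tooth, decorated by the appropriate elements of $\Z^{n-1}$ determined by $k$ and the distance along each edge. (2) Reduce to adjacent teeth by the cyclic-face argument above. (3) For adjacent teeth sharing the face $f$, compute $\mu_k(e)_{T_e} - \mu_k(e)_{T_e'}$ and match it term-by-term with a long hexagon relation supported in $f$, invoking Lemma~\ref{lemma:HexagonRelation} and the definition of long hexagon relations. The main obstacle I expect is purely bookkeeping in step (3): getting the decorations and signs to line up so that the difference is \emph{exactly} an alternating sum of hexagon corners rather than merely ``equal up to quad relations,'' and handling the boundary cases where the tooth meets a vertex point of $\T$ (where $e_0 = e_n = 0$ makes some terms drop out). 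Once the figure is pinned down, however, this is a finite, rotationally symmetric calculation, and no deeper input beyond the relations already established in Section~5 should be required.
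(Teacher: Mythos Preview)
Your reduction step is right: any two teeth of $e$ are connected by a sequence of elementary flips across a single simplex in the link of $e$, so it suffices to compare two teeth $T_e,T_e'$ lying in the two faces of one simplex $\Delta$ that contain $e$. But your local computation in step~(3) is misconceived. Two adjacent teeth do \emph{not} share a face; they lie in two \emph{different} faces of $\Delta$. If, say, $e=\varepsilon_{01}$, then $T_e$ lives in face $012$ and $T_e'$ in face $013$, and
\[
\mu_k(e)_{T_e}-\mu_k(e)_{T_e'}
=(n-k,0,k,0)-(0,n-k,k,0)-(n-k,0,0,k)+(0,n-k,0,k),
\]
which is supported on four edge points spread over \emph{both} faces. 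This chain cannot be reached by hexagon or long hexagon relations, since those are confined to a single face (Lemma~\ref{lemma:HexagonRelation}).

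What actually kills this difference is a single quad relation (Definition~\ref{def:QuadRelation}): the four points above are exactly the corners of the quadrilateral with $a=0$, parameters $(n-k,k)$, and the alternating sum is $\pm(p_0-p_1+p_2-p_3)\in\Im(\beta^*)$. This is the paper's argument (see Figure~\ref{fig:TwoTeeth}). So the fix is simple---replace your planned hexagon computation by the observation that the difference is literally a quad relation---but as written your proposal invokes the wrong relation and would not close.
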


\begin{proof}
Since any two teeth of $e$ are connected through a sequence of flips past 
a simplex in the link of $e$, it is enough to prove the result when $T_e$ and 
$T^\prime_e$ are teeth in a single simplex. Hence, we must prove that a 
configuration as in Figure~\ref{fig:TwoTeeth} represents zero 
in~$C_1^{\fg}(\T)\big/\Im(\beta^*)$. This is a consequence of the quad relation.
\end{proof}

\begin{figure}[htb]
\begin{center}
\begin{minipage}[b]{0.47\textwidth}
\begin{center}
\scalebox{0.6}{\input{MukTooth.tex}}
\end{center}
\end{minipage}
\hfill
\begin{minipage}[b]{0.47\textwidth}
\begin{center}
\scalebox{0.57}{\input{IndependentOfTooth.tex}}
\end{center}
\end{minipage}\\
\begin{minipage}[t]{0.47\textwidth}
\begin{center}
\caption{A tooth $T_e$ of $e$ and $\mu_k(e)_{T_e}$.}\label{fig:Tooth}
\end{center}
\end{minipage}
\hfill
\begin{minipage}[t]{0.47\textwidth}
\begin{center}
\caption{$\mu_k(e)_{T_e}-\mu_k(e)_{T_e^\prime}$ is a quad relation.}
\label{fig:TwoTeeth}
\end{center}
\end{minipage}
\end{center}
\vspace{-2mm}
\end{figure}

It follows that we have a map
\begin{equation}
\mu_k\colon C_1(\widehat M)\to C_1^{\fg}(\T),\qquad e\mapsto \mu_k(e)_{T_e}.
\end{equation}
We shall also consider the map $\overline\mu_k\colon C_1(\widehat M)\to 
C_1^{\fg}(\T)$ taking an oriented edge $e$ of $\T$ to the integral point on 
$e$ at distance $k$ from the initial point of $e$. Note that if $f_1$ and 
$f_2$ are the first and second edge of some tooth of $e$, 
$\mu_k(e)=\overline\mu_k(f_1)-\overline\mu_{n-k}(f_2)$. This is immediate 
from the definition of $\mu_k$ and $\overline\mu_k$.

\begin{lemma}
\label{lemma:ConsecutiveEdges} 
If $e_1$ and $e_2$ are two succesive oriented edges,
\begin{equation}\mu_k(e_1+e_2)=\overline\mu_k(e_1)
-\overline\mu_{n-k}(e_2)\in C_1^{\fg}(\T)\big/\Im(\beta^*).
\end{equation}
\end{lemma}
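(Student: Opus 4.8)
\emph{Proof plan.} The plan is to rewrite the claimed equality in a form that depends only on local data and then to verify it inside a single simplex. For an oriented edge $e$ of $\T$ set $\phi(e)=\mu_k(e)-\overline\mu_k(e)\in C_1^{\fg}(\T)/\Im(\beta^*)$. Writing $e_1\colon A\to B$, $e_2\colon B\to C$ and $\bar e_2$ for $e_2$ with reversed orientation, we have $\mu_k(\bar e_2)=-\mu_k(e_2)$ since $\mu_k$ is a homomorphism and $\bar e_2=-e_2$ in $C_1(\widehat M)$, and $\overline\mu_k(\bar e_2)=\overline\mu_{n-k}(e_2)$ straight from the definition of $\overline\mu_k$. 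Hence $\phi(\bar e_2)=-\mu_k(e_2)-\overline\mu_{n-k}(e_2)$, so that
\[
\phi(e_1)-\phi(\bar e_2)=\bigl(\mu_k(e_1)+\mu_k(e_2)\bigr)-\bigl(\overline\mu_k(e_1)-\overline\mu_{n-k}(e_2)\bigr)\ \in\ C_1^{\fg}(\T)/\Im(\beta^*).
\]
Thus the lemma is equivalent to $\phi(e_1)\equiv\phi(\bar e_2)$; and since $e_1,\bar e_2$ are two arbitrary oriented edges with the same terminal vertex $B$ (and every such pair arises this way), it is enough to show that $\phi(e)\bmod\Im(\beta^*)$ depends only on the terminal vertex of $e$.

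I would then reduce this to the local statement: if $e,e'$ are two edges of a single $3$-simplex $\Delta\in\T$ meeting at a vertex of $\Delta$, oriented towards it, then $\phi(e)\equiv\phi(e')\pmod{\Im(\beta^*)}$. Given the local statement, fix the common terminal vertex $B$; its link in $\widehat M$ is a closed surface whose vertices are the edges of $\T$ at $B$ and whose triangles are the $3$-simplices of $\T$ at $B$. Connectedness of this surface produces, for any two edges $e,e'$ at $B$, a chain of edges at $B$ in which successive members are edges of a common $3$-simplex, and applying the local statement along the chain propagates the equality $\phi(e)\equiv\phi(e')$ — the same device used for the teeth in Lemma~\ref{lemma:FlipTeeth}.

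For the local statement I would normalize so that $\Delta$ has vertices $0,1,2,3$ and take $e,e'$ to be the edges $1\to0$ and $2\to0$ (the other pairs of edges at a vertex being handled by permuting $\{1,2,3\}$). Using the remark preceding the lemma, I would compute $\mu_k(e)$ from the tooth $1\to3\to0$ in the face $013$ and $\mu_k(e')$ from the tooth $2\to3\to0$ in the face $023$; the two copies of $\overline\mu_{n-k}(3\to0)$ then cancel and one is left with
\[
\phi(e)-\phi(e')=(0,n-k,0,k)-(k,n-k,0,0)+(k,0,n-k,0)-(0,0,n-k,k),
\]
which is exactly the alternating sum of the four corners of a quadrilateral in $\Delta^3_n$ (standard position with side lengths $k$ and $n-k$ and offset $0$, up to a coordinate permutation). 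By Definition~\ref{def:QuadRelation} and the lemma identifying quad relations with elements of $\Im(\beta^*)$, this difference lies in $\Im(\beta^*)$, completing the local statement and hence the lemma. The only genuine work is the last computation — matching the six integral points that occur into a single quad; the reformulation via $\phi$ and the link--connectivity reduction are formal, and I expect the bookkeeping (and getting the tooth orientations right) to be the main place one can slip.
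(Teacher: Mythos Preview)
Your argument is correct. The core mechanism matches the paper's: reduce the identity to a single quad relation by exploiting the freedom in choosing teeth (Lemma~\ref{lemma:FlipTeeth}). The paper does this more directly---it takes the six-point configuration $\mu_k(e_1)+\mu_k(e_2)-\overline\mu_k(e_1)+\overline\mu_{n-k}(e_2)$ and flips the teeth of $e_1$ and $e_2$ until their common edge at $B$ coincides, whereupon the configuration becomes $\mu_k(e)_{T_e}-\mu_k(e)_{T_e'}$ for two teeth of a single edge $e$, hence zero.

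Your repackaging via $\phi(e)=\mu_k(e)-\overline\mu_k(e)$ and the reduction to ``$\phi$ depends only on the terminal vertex'' is a clean reformulation, and your explicit invocation of link connectivity is arguably more careful than the paper's phrase ``flip teeth until they meet at a common edge $e$''---taken literally, that requires the link-vertices corresponding to $e_1$ and $\bar e_2$ to have a common neighbor, which need not hold in one step. Your chain argument through the link handles this cleanly; the paper is implicitly doing the same thing but leaves it to the picture. The final local computation (the four points $(0,n-k,0,k),(k,n-k,0,0),(k,0,n-k,0),(0,0,n-k,k)$ forming a quad with side lengths $k$ and $n-k$) is correct.
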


\begin{proof}
We must show that a configuration as in Figure~\ref{fig:Butterfly1} 
represents $0$ in $C_1^{\fg}(\T)\big/\Im(\beta^*)$. By flipping the teeth 
of $e_1$ and $e_2$ (which by Lemma~\ref{lemma:FlipTeeth} does not change the 
element in $C_1^{\fg}(\T)\big/\Im(\beta^*)$), we can tranform the 
configuration into a configuration as in Figure~\ref{fig:FlippedButterfly1} 
where the two teeth meet at a common edge $e$. This configuration also 
represents $\mu_k(e)_{T_e}-\mu_k(e)_{T_e^{\prime}}$ for two teeth $T_e$ and 
$T_e^{\prime}$ of $e$, so is zero by Lemma~\ref{lemma:FlipTeeth}.
\end{proof}

\begin{figure}[htb]
\begin{center}
\begin{minipage}[b]{0.47\textwidth}
\begin{center}
\scalebox{0.5}{\input{Butterfly1.tex}}
\end{center}
\end{minipage}
\hfill
\begin{minipage}[b]{0.47\textwidth}
\begin{center}
\scalebox{0.57}{\input{FlippedButterfly1.tex}}
\end{center}
\end{minipage}\\
\begin{minipage}[t]{0.47\textwidth}
\begin{center}
\caption{Configuration representing 
$\mu_k(e_1+e_2)-\overline\mu_k(e_1)+\overline\mu_{n-k}(e_2)$.}
\label{fig:Butterfly1}
\end{center}
\end{minipage}
\hfill
\begin{minipage}[t]{0.47\textwidth}
\begin{center}
\caption{Configuration representing $\mu_k(e)_{T_e}-\mu_k(e)_{T_e^\prime}=0$.}
\label{fig:FlippedButterfly1}
\end{center}
\end{minipage}
\end{center}
\end{figure}

\begin{corollary}
$\mu_k$ induces a map $\mu_k\colon H_1(\widehat M)\to H_2(\J^{g})$.
\end{corollary}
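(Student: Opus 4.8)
The plan is to check that $\mu_k$ carries $1$-cycles of $\widehat M$ into $\Ker(\alpha^*)$ and $1$-boundaries of $\widehat M$ into $\Im(\beta^*)$, since $H_2(\J^{\fg})=\Ker\big(\alpha^*\colon C_1^{\fg}(\T)\to C_0^{\fg}(\T)\big)/\Im(\beta^*)$. By Lemma~\ref{lemma:FlipTeeth} the assignment $e\mapsto \mu_k(e)_{T_e}$ already defines a homomorphism $\mu_k\colon C_1(\widehat M)\to C_1^{\fg}(\T)/\Im(\beta^*)$ that is independent of the chosen teeth, so it remains only to verify these two facts and then to observe that together they say exactly that $\mu_k$ descends to a map on $H_1(\widehat M)=\Ker\big(\partial\colon C_1(\widehat M)\to C_0(\widehat M)\big)/\Im\big(\partial\colon C_2(\widehat M)\to C_1(\widehat M)\big)$.

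First I would compute the composite $\alpha^*\circ\mu_k\colon C_1(\widehat M)\to C_0^{\fg}(\T)$, which is well defined because $\alpha^*\circ\beta^*=0$. Using $\mu_k(e)=\overline\mu_k(f_1)-\overline\mu_{n-k}(f_2)$ for a tooth $u\to w\to v$ of an oriented edge $e$ from a $0$-cell $u$ to a $0$-cell $v$, together with the formula~\eqref{eq:alpha*} for $\alpha^*$ and the convention $e_0=e_n=0$, one gets $\alpha^*\big(\overline\mu_k(f_1)\big)=u\otimes e_{n-k}+w\otimes e_k$ and $\alpha^*\big(\overline\mu_{n-k}(f_2)\big)=w\otimes e_k+v\otimes e_{n-k}$, so the $w$-terms cancel in the difference and $\alpha^*\circ\mu_k$ is the map $e\mapsto (u-v)\otimes e_{n-k}$, i.e.\ $-\,\partial(\cdot)\otimes e_{n-k}$. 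In particular it vanishes on $1$-cycles, so $\mu_k$ sends $1$-cycles into $\Ker(\alpha^*)$ modulo $\Im(\beta^*)$.

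Next I would show $\mu_k(\partial F)\in\Im(\beta^*)$ for every $2$-cell $F$ of $\T$. Since $F$ is a triangle we may write $\partial F=e_1+e_2+e_3$ as a cyclic sum of three successive oriented edges running $a\to b\to c\to a$ around $F$. By Lemma~\ref{lemma:ConsecutiveEdges} (and linearity of $\mu_k$), $\mu_k(e_1)+\mu_k(e_2)=\overline\mu_k(e_1)-\overline\mu_{n-k}(e_2)$ modulo $\Im(\beta^*)$. On the other hand, evaluating $\mu_k(e_3)$ by means of the tooth of $e_3=c\to a$ lying inside $F$ — namely $c\to b$ followed by $b\to a$, which are the reverses of $e_2$ and $e_1$ — and using the evident identity $\overline\mu_j(\overline e)=\overline\mu_{n-j}(e)$, one obtains $\mu_k(e_3)=\overline\mu_k(\overline{e_2})-\overline\mu_{n-k}(\overline{e_1})=\overline\mu_{n-k}(e_2)-\overline\mu_k(e_1)$ modulo $\Im(\beta^*)$. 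Adding the two displays, everything telescopes and $\mu_k(\partial F)=0$ in $C_1^{\fg}(\T)/\Im(\beta^*)$; since $C_2(\widehat M)$ is free on the $2$-cells, $\mu_k$ kills all of $B_1(\widehat M)$.

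The substantive content has already been packaged into Lemmas~\ref{lemma:FlipTeeth} and~\ref{lemma:ConsecutiveEdges}, so the only step requiring genuine care is the orientation bookkeeping in the boundary computation: correctly identifying the tooth of $e_3$ with the reverses of $e_1,e_2$ and applying $\overline\mu_j(\overline e)=\overline\mu_{n-j}(e)$ with the right index shift, since a stray sign would spoil the cancellation. The $\alpha^*$ computation in the previous paragraph is routine given the explicit formula for $\alpha^*$. Granting both, the two verified facts combine formally to produce the induced homomorphism $\mu_k\colon H_1(\widehat M)\to H_2(\J^{\fg})$.
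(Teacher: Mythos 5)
Your proof is correct and follows essentially the same route as the paper: cycles go to cycles via the explicit formula for $\alpha^*$, and boundaries die because $\mu_k(e_1+e_2)=\overline\mu_k(e_1)-\overline\mu_{n-k}(e_2)$ by Lemma~\ref{lemma:ConsecutiveEdges} while $e_1+e_2$ is a tooth of $e_3$, forcing the telescoping cancellation. Your version merely makes explicit the orientation bookkeeping (the identity $\overline\mu_j(\overline e)=\overline\mu_{n-j}(e)$) and the computation $\alpha^*\circ\mu_k(e)=-\partial(e)\otimes e_{n-k}$ that the paper leaves as ``immediate.''
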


\begin{proof}
The fact that $\mu_k$ takes cycles to cycles is immediate from the 
definition of $\alpha^*$. Let $e_1+e_2+e_3$ be an oriented path representing 
the boundary of a face in $\T$. We have 
\begin{multline}
\mu_k(e_1+e_2+e_3)=\mu_k(e_1+e_2)+\mu_k(e_3)=\\
\overline\mu_k(e_1)-\overline\mu_{n-k}(e_2)+\mu_k(e_3)=-\mu_k(e_3)+\mu_k(e_3)=0,
\end{multline}
where the third equality follows from the fact that $e_1+e_2$ is a tooth of 
$e_3$. This proves the result.
\end{proof}

\begin{lemma}
\label{lemma:FirstProperty}
We have $\mu_k=-\mu_{n-k}\colon H_1(\widehat M)\to H_2(\J^{\fg})$.
\end{lemma}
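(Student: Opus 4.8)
The plan is to exploit that, by the corollary just established, both $\mu_k$ and $\mu_{n-k}$ factor through $H_1(\widehat M)$; hence it suffices to fix an oriented edge-loop $\gamma=e_1+\dots+e_m$ representing a given class and show
\[
(\mu_k+\mu_{n-k})(\gamma)\in\Im(\beta^*).
\]
First I would normalize $\gamma$: by prepending to $\gamma$ an edge followed by its reverse (which alters neither the underlying $1$-chain nor $\mu_k(\gamma)$, since the two added terms cancel) I may assume that $m$ is even. This lets me group the edges of $\gamma$ into the consecutive pairs $(e_1,e_2),(e_3,e_4),\dots,(e_{m-1},e_m)$, to each of which Lemma~\ref{lemma:ConsecutiveEdges} applies.

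The second step is the telescoping computation. Using linearity of $\mu_k$ and Lemma~\ref{lemma:ConsecutiveEdges} on each pair,
\[
\mu_k(\gamma)=\sum_{j=1}^{m/2}\mu_k(e_{2j-1}+e_{2j})\equiv\sum_{j=1}^{m/2}\bigl(\overline\mu_k(e_{2j-1})-\overline\mu_{n-k}(e_{2j})\bigr)\pmod{\Im(\beta^*)},
\]
and the same with $k$ and $n-k$ interchanged gives the corresponding expression for $\mu_{n-k}(\gamma)$. Adding the two, and using that the two symmetric edge points $\overline\mu_k(e)$ and $\overline\mu_{n-k}(e)$ of an edge $e$ are interchanged under reversal of $e$, everything collapses into a single alternating sum over the edges of $\gamma$:
\[
(\mu_k+\mu_{n-k})(\gamma)\equiv\sum_{i=1}^m(-1)^{i+1}\bigl(\overline\mu_k(e_i)+\overline\mu_{n-k}(e_i)\bigr)\pmod{\Im(\beta^*)}.
\]
Two sanity checks confirm we are on the right track: the formula for $\alpha^*$ shows this element lies in $\Ker(\alpha^*)$ (as it must, since $\mu_k,\mu_{n-k}$ send cycles to cycles), and the map $\nu$ sends the $i$-th summand to $(-1)^{i+1}\bigl(k+(n-k)\bigr)e_i=0$, so it is killed by $\nu$ as well.

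The last step — which I expect to be the genuine obstacle — is to upgrade ``lies in $\Ker(\alpha^*)$ and is $\nu$-trivial'' to ``lies in $\Im(\beta^*)$'' for this alternating sum of height-$k$ edge-point pairs. The idea is to telescope it away using the relations among edge points that are manifestly in $\Im(\beta^*)$: the ``large'' quad relations of Proposition~\ref{prop:SameTypeZero} supported on an equatorial $4$-cycle among the six edges of a tetrahedron (these swap the symmetric edge points on a pair of opposite edges), together with the long hexagon relations obtained by summing the hexagon relations of Lemma~\ref{lemma:HexagonRelation} along a strip of a $2$-face (whose only surviving terms are edge points on the boundary of that face). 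Concretely I would first homotope $\gamma$ through the $2$-skeleton so that consecutive edges $e_i,e_{i+1}$ always span a common face, and then cancel the alternating sum face by face; the bookkeeping in this step is the real content, and is in effect the edge-point case of the computation of $H_2(\J^{\fg})$ carried out in the remainder of the section. An alternative, and perhaps shorter, route is to first prove $\nu\circ\mu_k$ equals multiplication by $k$ (the computation sketched above shows $\mu_k(e)$ maps under $\nu$ to $k\cdot e$ modulo boundaries), deduce $\nu\circ(\mu_k+\mu_{n-k})=0$, and then conclude via injectivity of $\nu$ — at the cost of presupposing exactly the part of the $H_2$-computation into which this lemma feeds.
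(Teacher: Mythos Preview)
Your first two steps are exactly the paper's setup, and in fact after the second step you are already done --- you just don't notice it. Once you have
\[
\mu_k(\gamma)\equiv\sum_{j=1}^{m/2}\bigl(\overline\mu_k(e_{2j-1})-\overline\mu_{n-k}(e_{2j})\bigr)\pmod{\Im(\beta^*)},
\]
rewrite the same sum by pairing each $e_{2j}$ with its cyclic successor $e_{2j+1}$ instead of its predecessor (indices modulo $m$, which is where the even-length assumption is used):
\[
\mu_k(\gamma)\equiv\sum_{j=1}^{m/2}\bigl(-\overline\mu_{n-k}(e_{2j})+\overline\mu_k(e_{2j+1})\bigr)
=-\sum_{j=1}^{m/2}\bigl(\overline\mu_{n-k}(e_{2j})-\overline\mu_k(e_{2j+1})\bigr)
\equiv-\mu_{n-k}(\gamma),
\]
the last congruence being Lemma~\ref{lemma:ConsecutiveEdges} applied with $k$ replaced by $n-k$ to the consecutive pairs $(e_{2j},e_{2j+1})$. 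That is the paper's entire proof.

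Your ``third step'' is therefore unnecessary, and as written it is not a proof: you correctly flag it as the genuine obstacle, sketch two possible routes, and leave both unexecuted. The first route (cancelling the alternating edge-point sum face by face via quad and long-hexagon relations) could presumably be made to work, but you have not carried it out; the second route (via injectivity of $\nu$) is, as you yourself note, circular, since this lemma is one of the ingredients used to establish that $\nu$ is an isomorphism. The moral is that the identity $\mu_k=-\mu_{n-k}$ is a formal consequence of Lemma~\ref{lemma:ConsecutiveEdges} and cyclicity alone, with no further input about $\Im(\beta^*)$ needed.
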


\begin{proof}
Let $\alpha\in H_1(\widehat M)$. Since $H_1(\widehat M)$ is generated by 
edge cycles, we may assume that $\alpha $ is represented by an edge cycle
$e_1+e_2+\dots + e_{2l}$, which we may assume to have even length. We thus 
have (indices modulo $2l$)
\begin{equation}
\label{eq:ShiftByOne}
\mu_k(\alpha)=\sum_{i=1}^l\big(\overline\mu_k(e_{2i-1})
-\overline\mu_{n-k}(e_{2i})\big)
=\sum_{i=1}^l\big(-\overline\mu_{n-k}(e_{2i})+\overline\mu_{k}(e_{2i+1})\big)
=-\mu_{n-k}(\alpha),
\end{equation}
where the second and fourth equality follow from 
Lemma~\ref{lemma:ConsecutiveEdges} and the third equality follows from 
shifting indices by $1$.
\end{proof}

\begin{lemma}
\label{lemma:SecondProperty}
For each $k$, we have $\mu_k=k\mu_1\colon H_1(\widehat M)\to H_2(\J^{\fg})$.
\end{lemma}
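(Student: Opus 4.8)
The plan is to induct on $k$, the case $k=1$ being trivial. For the inductive step it is enough to prove that $\mu_{k+1}=\mu_k+\mu_1$ as homomorphisms $H_1(\widehat M)\to H_2(\J^{\fg})$, since then $\mu_{k+1}=k\mu_1+\mu_1=(k+1)\mu_1$. As $H_1(\widehat M)$ is generated by classes of edge cycles, fix such a cycle $\gamma=e_1+\dots+e_r$; exactly as in the proof of Lemma~\ref{lemma:FirstProperty} we may assume $r=2l$ is even. Applying Lemma~\ref{lemma:ConsecutiveEdges} to the consecutive pairs $(e_{2i-1},e_{2i})$ and adding, we obtain, for each $m$,
\begin{equation*}
\mu_m(\gamma)\ \equiv\ \sum_{i=1}^{l}\bigl(\overline\mu_m(e_{2i-1})-\overline\mu_{n-m}(e_{2i})\bigr)\pmod{\Im(\beta^*)}.
\end{equation*}
Subtracting, the class $\mu_{k+1}(\gamma)-\mu_k(\gamma)-\mu_1(\gamma)\in H_2(\J^{\fg})$ is represented by an explicit $\pm1$-combination $c(\gamma)$ of the integral points $\overline\mu_m(e_j)$ lying on the edges of $\gamma$, and the task is reduced to showing $c(\gamma)\in\Im(\beta^*)$.

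For this I would use the hexagon and long hexagon relations of Lemma~\ref{lemma:HexagonRelation}, which live inside the individual faces of $\T$ and are precisely the tool for tracking edge points (in contrast with Proposition~\ref{prop:SameTypeZero}, which only controls integral points up to edge points). Concretely: for each edge $e_j$ of $\gamma$, pick a face of $\T$ containing $e_j$; long hexagon relations running across that face rewrite, modulo $\Im(\beta^*)$, the points $\overline\mu_{k+1}(e_j)$, $\overline\mu_k(e_j)$, $\overline\mu_1(e_j)$ (and their reversed-edge analogues) as alternating sums of interior points of the face, and the quad relations of Proposition~\ref{prop:SameTypeZero} (and the identities in its proof) turn those interior points back into edge points. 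The claim is that, once these rewritings are performed with choices of faces — and of teeth, which is harmless by Lemma~\ref{lemma:FlipTeeth} — that are coherent along $\gamma$, all the spurious terms, namely interior points and edge points on edges not in $\gamma$, cancel in the sum over $j$, while the surviving terms along $\gamma$ collapse to zero. This yields $c(\gamma)\in\Im(\beta^*)$ and completes the induction.

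The conceptual skeleton above is short; the real difficulty is the lattice bookkeeping in the second step, namely arranging the long hexagon and quad relations so that the cancellation is exact — equivalently, so that one recovers $\mu_{k+1}=\mu_k+\mu_1$ and not some proper multiple of the expected identity. This is exactly what the hexagon relations of the preceding section were introduced for, so I expect it to be a finite if intricate computation. Finally, it is worth recording the payoff: together with Lemma~\ref{lemma:FirstProperty}, the special case $\mu_{n-1}=(n-1)\mu_1$ combined with $\mu_{n-1}=-\mu_1$ gives $n\mu_1=0$, so that $\mu_1$ factors through $H_1(\widehat M;\Z/n\Z)$, which is what is needed to construct the inverse of $\nu$.
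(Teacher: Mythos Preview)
Your first step is sound: the inductive reduction to $\mu_{k+1}=\mu_k+\mu_1$ is legitimate, and Lemma~\ref{lemma:ConsecutiveEdges} does yield the $\overline\mu_m$-expression for $\mu_m(\gamma)$ on an even-length edge cycle. But the lemma's entire content is the combinatorial identity you defer to the second paragraph, and that paragraph is a description of intent, not a proof. You assert that coherent choices of faces and long hexagon relations will make the spurious terms cancel, and you yourself say you ``expect it to be a finite if intricate computation'' --- but you do not perform it. As written, this is a plan rather than a proof.

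The paper's argument is different and, importantly, actually carries out the computation. It does \emph{not} induct and does \emph{not} pass to the edge-point description via $\overline\mu$. It keeps $k\mu_1(e_i)-\mu_k(e_i)$ in the tooth representation (so the terms are face points), applies a specific pattern of long hexagon relations --- $k-d$ of them at distance $d$ from $e_i$ --- to convert it into a ladder of $\pm1$'s running along the tooth (Figure~\ref{fig:MukEqkMu1Hexagon}), and then uses the tooth-flipping of Lemma~\ref{lemma:FlipTeeth} to bring the ladders of consecutive edges $e_i,e_{i+1}$ into a common simplex, where their interface cancels as a sum of $k-1$ explicit quad relations. Closing the cycle kills what remains.

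There is also a structural point worth naming. By reducing to $\overline\mu$ you have traded face points for edge points, but the mechanism that makes the paper's argument close --- tooth-flipping, which transports configurations between faces until they share a simplex --- operates on the tooth (face-point) representation. Your edge-point expression $c(\gamma)$ has no direct analogue of that flexibility, so to execute your sketch you would most likely have to reintroduce face points anyway (e.g.\ via hexagons centred one step off the relevant edges) and then cancel them again. That is not fatal, but it means the induction buys you nothing: the bookkeeping you still owe is essentially the same as in the paper's direct argument, which has the advantage of telling you exactly which hexagon and quad relations to apply.
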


\begin{proof}
Let $\alpha=e_1+e_2+\dots + e_{2l}$ as in the proof of 
Lemma~\ref{lemma:ConsecutiveEdges}. We can represent $k\mu_1(e_i)-\mu_k(e_i)$ 
as in Figure~\ref{fig:MukEqkMu1}. By applying the long hexagon relations 
($k-d$ relations at distance $d$ from $e_i$), the configuration is 
equivalent to that of Figure~\ref{fig:MukEqkMu1Hexagon}. Now consider two 
consecutive edges $e_i$ and $e_i+1$ as in Figure~\ref{fig:Butterfly2}.
By flipping teeth (which doesn't change the homology class), we may 
transform the configuration into that of Figure~\ref{fig:FlippedButterfly2}, 
and by further flipping, we may assume that the configuration lies in a 
single simplex. It is now evident, that the points near the common edge 
$e$ represents a sum of $k-1$ quad relations. Hence, all the points near 
$e$ vanish. By flipping the teeth back, we end up with a configuration as 
in Figure~\ref{fig:Butterfly2}, but with no points in the middle. Since 
$\alpha$ is a cycle, it follows that everything sums to zero.  
\end{proof}

\begin{figure}[htb]
\begin{center}
\begin{minipage}[b]{0.47\textwidth}
\begin{center}
\scalebox{0.6}{\input{MukEqkMu1.tex}}
\end{center}
\end{minipage}
\hfill
\begin{minipage}[b]{0.47\textwidth}
\begin{center}
\scalebox{0.57}{\input{MukEqkMu1Hexagon.tex}}
\end{center}
\end{minipage}\\
\begin{minipage}[t]{0.47\textwidth}
\begin{center}
\caption{$k\mu_1(e_i)-\mu_k(e_i)$.}\label{fig:MukEqkMu1}
\end{center}
\end{minipage}
\hfill
\begin{minipage}[t]{0.47\textwidth}
\begin{center}
\caption{$k\mu_1(e_i)-\mu_k(e_i)$ after adding long hexagon relations.}
\label{fig:MukEqkMu1Hexagon}
\end{center}
\end{minipage}
\end{center}
\end{figure}

\begin{figure}[htb]
\begin{center}
\begin{minipage}[b]{0.47\textwidth}
\begin{center}
\scalebox{0.52}{\input{Butterfly2.tex}}
\end{center}
\end{minipage}
\hfill
\begin{minipage}[b]{0.47\textwidth}
\begin{center}
\scalebox{0.57}{\input{FlippedButterfly2.tex}}
\end{center}
\end{minipage}\\
\begin{minipage}[t]{0.47\textwidth}
\begin{center}
\caption{$k\mu_1(e_i)-\mu_k(e_i)$.}\label{fig:Butterfly2}
\end{center}
\end{minipage}
\hfill
\begin{minipage}[t]{0.47\textwidth}
\begin{center}
\caption{$k\mu_1(e_i)-\mu_k(e_i)$ after adding long hexagon relations.}
\label{fig:FlippedButterfly2}
\end{center}
\end{minipage}
\end{center}
\end{figure}

By the above lemmas
we have a map
\begin{equation}
\mu\colon H_1(\widehat M;\Z/n\Z)\to H_2(\J^{\fg}),\qquad 
e\otimes k\mapsto \mu_k(e).
\end{equation}

\subsubsection{$\mu$ is the inverse of $\nu$.}

\begin{lemma}
The composition $\nu\circ\mu$ is the identity on $H_1(\widehat M;\Z/n\Z)$.
\end{lemma}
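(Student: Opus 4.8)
The plan is to compute $\nu\circ\mu$ at the chain level. Any class in $H_1(\widehat M;\Z/n\Z)$ is represented by a cycle $\alpha=\sum_i k_i e_i$, where the $e_i$ are oriented edges of $\T$ and $k_i\in\Z/n\Z$, and by construction $\mu(\alpha)=\sum_i\mu_{k_i}(e_i)$. Since $\nu$ kills both $\Im(\beta^*)$ and the boundaries in $C_1(\widehat M;\Z/n\Z)$, the element $\nu(\mu_k(e))$ is a well-defined class, independent of the tooth chosen to represent $\mu_k(e)$. So it is enough to show that $\nu(\mu_k(e))=ke$ in $H_1(\widehat M;\Z/n\Z)$ for every oriented edge $e$ and every $k\in\{1,\dots,n-1\}$, and then to sum over $i$.

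First I would evaluate $\nu$ on the basic piece $\overline\mu_k(f)$. If $f$ is the oriented edge from vertex $i$ to vertex $j$ of a simplex $\Delta$, then $\overline\mu_k(f)$ is the integral point whose $i$-th coordinate equals $n-k$, whose $j$-th coordinate equals $k$, and whose other coordinates vanish. Using the freedom recorded just after the definition of $\nu$ to compute $\nu$ with respect to any vertex as base, and taking $i$ as the base vertex, one gets immediately
\[
\nu(\overline\mu_k(f))\equiv k\,\varepsilon^{\ori}_{ij}=k f\pmod{\text{boundaries}}.
\]

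Next, recall from the remark preceding Lemma~\ref{lemma:ConsecutiveEdges} that, for a tooth $T_e$ of $e$ with first edge $f_1$ and second edge $f_2$ (traversed so that the path $f_1$ then $f_2$ runs from the initial to the terminal vertex of $e$), one has $\mu_k(e)=\overline\mu_k(f_1)-\overline\mu_{n-k}(f_2)$. Combining this with the previous computation, and reducing modulo $n$ and modulo boundaries,
\[
\nu(\mu_k(e))\equiv k f_1-(n-k)f_2\equiv k(f_1+f_2).
\]
Since $f_1$ followed by $f_2$ is a tooth of $e$, the chains $f_1+f_2$ and $e$ are the two paths around the boundary of a face of $\T$, hence $f_1+f_2\equiv e$ modulo boundaries. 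Therefore $\nu(\mu_k(e))\equiv k e$, and summing over the edges of $\alpha$ yields $\nu(\mu(\alpha))=\sum_i k_i e_i=\alpha$, which is the assertion.

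The computation is short, and the only point needing care is the orientation bookkeeping near the tooth: one must check that $f_1,f_2$ are oriented compatibly with $e$ so that $f_1+f_2$ (and not $\pm(f_1-f_2)$) is homologous to $e$, and that the index shift $k\mapsto n-k$ in the second summand of $\mu_k(e)$ cancels the minus sign modulo $n$. I expect this---together with confirming $\nu(\overline\mu_k(f))\equiv kf$ in each position of $f$ inside a simplex---to be the only delicate part; the rest is forced by the definitions of $\nu$, $\overline\mu_k$ and $\mu_k$.
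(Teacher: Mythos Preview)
Your proof is correct and follows essentially the same line as the paper's: both rest on the identity $\nu(\overline\mu_k(f))\equiv kf$ modulo boundaries. The only organizational difference is that the paper represents a class by an even-length edge cycle and invokes Lemma~\ref{lemma:ConsecutiveEdges} to write $\mu_k(\alpha)=\sum_i\big(\overline\mu_k(e_{2i-1})-\overline\mu_{n-k}(e_{2i})\big)$ on the edges of $\alpha$ itself, whereas you work edge by edge from the tooth formula $\mu_k(e)=\overline\mu_k(f_1)-\overline\mu_{n-k}(f_2)$ and close with the face relation $f_1+f_2\equiv e$; this is a harmless reorganization.
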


\begin{proof}
First observe that for each $1$-cell $e$ of $\T$, we have 
$\nu\circ\overline\mu_k(e)=ke$.
Consider a representative $\alpha=e_1+\dots e_{2l}\in C_1(\widehat M;\Z)$ of 
a homology class in $H_1(\widehat M)$. 
As in~\eqref{eq:ShiftByOne}, we have
\begin{multline}
\label{eq:NuMu}
\nu\circ\mu_k(\alpha)=\nu\Big(\sum_{i=1}^l
\big(\overline\mu_k(e_{2i-1})-\overline\mu_{n-k}(e_{2i})\big)\Big)=\\
\sum_{i=1}^l e_{2i-1}\otimes k-e_{2_i}\otimes(n-k)
=\alpha\otimes k\in C_1(\widehat M;\Z/n\Z)\big/\{\text{boundaries}\}.
\end{multline}
This proves the result.
\end{proof}

We now show that $\mu\circ\nu$ is the identity on $H_2(\J^{\fg})$. 
The idea is that every homology class in $H_2(\J^{g})$ can be 
represented by edge points.
Consider the set
\begin{equation}
T=\big\{(a,b,c,0)\in\dot\Delta^3_n(\Z)\bigm\vert a\geq b\geq c\geq 0\big\}.
\end{equation}
By Proposition~\ref{prop:SameTypeZero}, we can represent each homology 
class by an element $\tau+e$, where $e$ consists entirely of edge points, 
and $\tau$ consists of terms $(a,b,c,d)$ with $a\geq b\geq c\geq d\geq 0$ 
lying in a single simplex. Since an interior point is a sum of two face 
points minus an edge point, we may assume that $d=0$, i.e.~that all terms 
of $l$ are in $T$. Note that by adding and subtracting edge points in $T$ 
to $\tau$, one may further assume that $\alpha^*(\tau)=0$. Hence, we shall 
study elements in $\Ker(\alpha^*)$ of the form
\begin{equation}
\tau=\sum_{t\in T} k_tt, \qquad k_t\in\Z.
\end{equation} 
We say that a term $t\in T$ is in $\tau$ if $k_t\neq 0$. For 
$j=1,\dots, n-1$, consider the map 
\begin{equation}
\pi_j\colon C_0^{\fg}(\T)\to \Z,\qquad x\otimes e_i\mapsto \delta_{ij},
\end{equation}
where $\delta_{ij}$ is the Kronecker $\delta$.

\begin{lemma}
\label{lemma:AllRelationsGood}
Let $\tau=\sum_{t\in T}k_t t\in \Ker(\alpha^*)$. For any term $t$ in $\tau$, 
$t_1<t_0^{\max}$.
\end{lemma}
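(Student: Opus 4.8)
My plan is to argue by contradiction, pinning down exactly which term could violate the conclusion. Write $a:=t_0^{\max}$ for the largest value of the first coordinate occurring among the terms of $\tau$. If some term $t$ in $\tau$ has $t_1\ge a$, then since $t\in T$ we have $t_0\ge t_1\ge a$, while $t_0\le a$ by maximality; hence $t_0=t_1=a$, so $t$ is forced to be the single point $m:=(a,a,n-2a,0)$, and $m\in T$ only when $2a\le n\le 3a$. So it suffices to show $k_m=0$, and I would split into the three ranges $n=3a$, $2a<n<3a$, $n=2a$. Throughout, let $x_0,\dots,x_3$ denote the $0$-cells of $\T$ attached to the vertices of the simplex $\Delta$ containing all terms of $\tau$, so that $\alpha^*(\tau)=\sum_s k_s\sum_{i=0}^3 x_i\otimes e_{s_i}=0$, where $e_{s_3}=e_0=0$ for $s\in T$. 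The recurring idea is to find a $0$-cell $x$ and a level $\ell$ through which \emph{only} $m$ contributes to $\alpha^*(\tau)$.

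\textbf{Case $n=3a$.} Here every term $s$ in $\tau$ satisfies $s_1+s_2=n-s_0\ge 2a$ while $s_1,s_2\le s_0\le a$, forcing $s_0=s_1=s_2=a$; thus $\tau=k_m m$ and $0=\alpha^*(\tau)=k_m\,(x_0\otimes e_a+x_1\otimes e_a+x_2\otimes e_a)$. Since $1\le a\le n-1$ and $x_0\otimes e_a+x_1\otimes e_a+x_2\otimes e_a=(x_0+x_1+x_2)\otimes e_a$ is a nonzero element of $C_0(\T)\otimes\Z^{n-1}$ regardless of how the $x_i$ are identified, we get $k_m=0$.

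\textbf{Case $2a<n<3a$.} Put $\ell:=n-2a$, so $1\le\ell\le a-1$. A short check with $s_0\ge s_1\ge s_2\ge 0$, $s_0+s_1+s_2=n$, $s_0\le a$ shows $m$ is the only term in $\tau$ having \emph{any} coordinate equal to $\ell$: $s_0=\ell$ forces $s_1+s_2=2a$ with $s_1,s_2\le\ell<a$; $s_1=\ell$ forces $s_0+s_2=2a$ with $s_0\le a$, hence $s_2\ge a>\ell\ge s_2$; and $s_2=\ell$ forces $s_0=s_1=a$. As $m_0=m_1=a\ne\ell$, the coordinate $m_2=\ell$ is the only coordinate of $m$ equal to $\ell$. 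Reading off the coefficient of $x_2\otimes e_\ell$ in $\alpha^*(\tau)=0$ then gives $k_m\cdot\bigl|\{i:x_i=x_2,\ m_i=\ell\}\bigr|=k_m=0$.

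\textbf{Case $n=2a$, and the main obstacle.} Now $m=(a,a,0,0)$ is an edge point, and the direct isolation used above fails (no level is hit by $m$ alone in a single slot). I would instead use a short averaging argument that never looks at how the vertices of $\Delta$ are identified. For $j\ge 1$ set $N_j:=\pi_j(\alpha^*(\tau))=\sum_s k_s\,\bigl|\{i:s_i=j\}\bigr|$, so $N_j=0$ for all $j$; hence $\sum_{j=1}^{n-1}(3j-n)N_j=0$. Interchanging sums,
\[
\sum_{j=1}^{n-1}(3j-n)N_j=\sum_s k_s\sum_{i\,:\,s_i\ge 1}(3s_i-n),
\]
and for a face point $s\in T$ the inner sum is $3(s_0+s_1+s_2)-3n=0$, while for an edge point $s\in T$ (where $s_2=0$, $s_0+s_1=n$) it is $3(s_0+s_1)-2n=n$. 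Therefore $n\sum_{s\ \mathrm{edge}}k_s=0$, i.e.\ the coefficients of the edge points occurring in $\tau$ sum to zero. But the only edge point $s\in T$ with $s_0\le a$ is $(a,a,0,0)=m$ (from $s_0+s_1=n=2a$ and $s_0\ge s_1$), and every term of $\tau$ has $s_0\le a$; so that sum is exactly $k_m$, whence $k_m=0$. This contradiction completes the proof in all cases, so indeed $t_1<t_0^{\max}$ for every term $t$ in $\tau$.
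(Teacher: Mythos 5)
Your proof is correct, and its core strategy is the same as the paper's: the only term that could violate the conclusion is $u=(a,a,n-2a,0)$ with $a=t_0^{\max}$, and you kill $k_u$ by applying linear functionals to the identity $\alpha^*(\tau)=0$. Your middle case $2a<n<3a$ is exactly the paper's argument (the functional $\pi_{n-2a}$ picks out $k_u$ because every term of $\tau$ has $t_0\le a$, which forces any coordinate equal to $n-2a$ to be the third coordinate of $u$ itself). Where you genuinely add something is at the boundary. In the paper the case $n=3a$ gives $\pi_{n-2a}\circ\alpha^*(\tau)=3k_u$ rather than $k_u$ (harmless, and your direct treatment of that case is fine), but the case $n=2t_0^{\max}$ is a real gap in the paper's one-line argument: there $u=(a,a,0,0)$ is an edge point, $n-2t_0^{\max}=0$, the functional $\pi_0$ is not among those defined (and would vanish identically since $e_0=0$), and $u$ is certainly not the unique element of $T$ with $t_2=0$. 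Your weighted sum $\sum_{j}(3j-n)N_j$, which annihilates face points and isolates the total coefficient of edge points — of which only $(a,a,0,0)$ can occur in $\tau$ since all terms have $t_0\le a$ — cleanly disposes of this case. So your write-up is not just a verification but a repair of an imprecision in the published proof; the cost is a three-way case split, the benefit is that every case is airtight.
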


\begin{proof}
We must show that $u=(t_0^{\max},t_0^{\max},n-2t_0^{\max})$ can't be a term in 
$\tau$.  If $2t_0^{\max}>n$, $u\notin T$, so assume that $2t_0^{\max}\leq n$. 
Since $u$ is the unique element in $T$ with $t_2=n-2t_0^{\max}$, we have
$\pi_{n-2t_0^{\max}}\circ\alpha^*(\tau)=k_u$. Since $\alpha^*(\tau)=0$, it 
follows that $k_u=0$. Hence, $u$ is not a term in $\tau$.
\end{proof}

We can write $\tau=\tau^{\max}+\tau^{\rest}$, where $\tau^{\max}$ involves all 
the terms with $t_0=t_0^{\max}$, and $\tau^{\rest}$ involves all the rest.

\begin{lemma}
\label{lemma:AddLongHexagon} 
The maximal part $\tau^{\max}$ of $\tau$ is a linear combination of terms of 
the form 
\begin{equation}
C_{t_1,t_1^{\prime}}=(t_0^{\max},t_1,t_2,0)-(t_0^{\max},t_1^{\prime},t_2^{\prime},0),
\qquad  t_1>t_1^{\prime}.
\end{equation}
\end{lemma}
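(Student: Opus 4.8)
The plan is to reduce the claim to a single vanishing of a coefficient sum, and then extract that sum by composing $\alpha^*$ with a coordinate projection $\pi_j$. Every term occurring in $\tau^{\max}$ is a point of $T$ of the shape $(t_0^{\max},t_1,t_2,0)$, so all these terms share the same first coordinate $t_0^{\max}$. Hence the subgroup of $C_1^{\fg}(\T)$ spanned by the differences $C_{t_1,t_1^{\prime}}$ (over admissible pairs with $t_1>t_1^{\prime}$) is exactly the set of combinations $\sum_{t\colon t_0=t_0^{\max}}k_t\, t$ with $\sum_{t\colon t_0=t_0^{\max}}k_t=0$: picking a term $t^{(0)}$ occurring in $\tau^{\max}$ and writing $\sum_t k_t\, t=\sum_t k_t\,(t-t^{(0)})$ realizes any degree-zero combination as an integral linear combination of the $t-t^{(0)}$, each of which is $\pm C_{t_1,t_1^{\prime}}$ after ordering the two middle coordinates. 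So it suffices to prove $\sum_{t\in T,\ t_0=t_0^{\max}}k_t=0$.

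To isolate this sum I would apply $\pi_{t_0^{\max}}\circ\alpha^*$ to $\tau$. First note $t_0^{\max}\in\{1,\dots,n-1\}$, so that $\pi_{t_0^{\max}}$ is defined: a term $(a,b,c,0)\in T$ with $a=0$ would force $b=c=0$ and hence be the zero point, and with $a=n$ would be the excluded vertex $(n,0,0,0)$, so $1\le a\le n-1$ for every term. For a term $t=(t_0,t_1,t_2,0)$ one has $\pi_{t_0^{\max}}\circ\alpha^*(t)=\bigl|\{\,i\colon t_i=t_0^{\max}\,\}\bigr|$, since $\pi_{t_0^{\max}}$ ignores the $C_0(\T)$-factor of $\alpha^*(t)=\sum_i x_i\otimes e_{t_i}$. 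Now invoke Lemma~\ref{lemma:AllRelationsGood}: $t_1<t_0^{\max}$, and since $t\in T$ also $t_2\le t_1<t_0^{\max}$, while $t_3=0<t_0^{\max}$. Thus only $i=0$ can match, so $\pi_{t_0^{\max}}\circ\alpha^*(t)=\delta_{t_0,t_0^{\max}}$. Summing over the terms of $\tau$ gives
\[
\pi_{t_0^{\max}}\circ\alpha^*(\tau)=\sum_{t\in T,\ t_0=t_0^{\max}}k_t,
\]
and since $\tau\in\Ker(\alpha^*)$ this vanishes, which is exactly what is needed.

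The only substantive ingredient is Lemma~\ref{lemma:AllRelationsGood}; the rest is bookkeeping. I expect the only (small) obstacle to be making sure the projection genuinely reads off $t_0$ alone: without the bound $t_1<t_0^{\max}$ the map $\pi_{t_0^{\max}}\circ\alpha^*$ would also detect terms whose $t_1$- or $t_2$-coordinate equals $t_0^{\max}$, and the coefficient-sum identity would fail. It is also worth recording that no hypothesis on the $0$-cells $x_i$ of the ambient simplex is required, since $\pi_j$ depends only on the $\Z^{n-1}$-factor; this is what legitimizes working inside a single simplex even when some of its vertices are identified in $\T$.
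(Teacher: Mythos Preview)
Your proof is correct and follows essentially the same approach as the paper's: reduce to showing $\sum_{t\colon t_0=t_0^{\max}}k_t=0$, then read this off from $\pi_{t_0^{\max}}\circ\alpha^*(\tau)=0$. You are in fact more careful than the paper, which asserts that the second equality ``follows directly from the definition of $\alpha^*$'' without explicitly flagging that Lemma~\ref{lemma:AllRelationsGood} is needed to rule out contributions from $t_1$ or $t_2$ equaling $t_0^{\max}$.
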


\begin{proof}
It is enough to prove that $\sum_{t\vert t_0=t_0^{\max}}k_t=0$. Since 
$\alpha^*(\tau)=0$, this follows from
\begin{equation}
0=\pi_{t_0^{\max}}\circ\alpha^*(\tau)=\sum_{t\vert t_0=t_0^{\max}}k_t,
\end{equation}
where the second equality follows directly from the definition of $\alpha^*$.
\end{proof}

\begin{proposition}
\label{prop:EdgePointRep}
The kernel of $\alpha^*\colon C_1^{\fg}(\T)\to C_0^{\fg}(\T)$ is generated modulo 
$\Im(\beta^*)$ by edge points. In other word, each homology class can be 
represented by edge points.
\end{proposition}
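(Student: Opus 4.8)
\emph{The plan} is to prove this by induction, using the structural results already in hand. Combining Proposition~\ref{prop:SameTypeZero} with the reduction of interior points to face points, it suffices to show that every element of $\Ker(\alpha^*)$ of the normalized form $\tau=\sum_{t\in T}k_t t$, supported in a single simplex, is congruent modulo $\Im(\beta^*)$ to a $\Z$-combination of edge points. I would induct on $t_0^{\max}$, now interpreted as the largest first coordinate appearing among the \emph{non-edge} terms of $\tau$, with nothing to prove once $\tau$ has no non-edge term. Throughout the reduction I would keep the edge points as genuine summands of the running chain rather than discarding them: since every move alters the chain only by an element of $\Im(\beta^*)$, the running chain always stays congruent to $\tau$ and hence always lies in $\Ker(\alpha^*)$, which is exactly what is needed to reapply Lemmas~\ref{lemma:AllRelationsGood} and~\ref{lemma:AddLongHexagon} at later stages.

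For the inductive step, write $\tau=\tau^{\max}+\tau^{\rest}$. By Lemma~\ref{lemma:AddLongHexagon} the part $\tau^{\max}$ is a $\Z$-combination of the differences $C_{t_1,t_1^{\prime}}$, and telescoping along the top slice expresses it as a $\Z$-combination of the elementary differences $[p_j]-[p_{j-1}]$ of consecutive points $p_j=(t_0^{\max},j,n-t_0^{\max}-j,0)$. For each such elementary difference I would apply the hexagon relation centred at the face point $(t_0^{\max}-1,j,n-t_0^{\max}-j+1,0)$ of the slice $x_0=t_0^{\max}-1$; using the formula~\eqref{eq:BetaStar} for $\beta^*$ one checks that this rewrites $[p_j]-[p_{j-1}]$ modulo $\Im(\beta^*)$ as an alternating sum of four points lying on the slices $x_0=t_0^{\max}-1$ and $x_0=t_0^{\max}-2$. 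By Lemma~\ref{lemma:AllRelationsGood} the non-edge top-slice terms of $\tau$ satisfy $t_1\le t_0^{\max}-1$; whenever all of the relevant indices are in fact $\le t_0^{\max}-2$, reordering the four replacement points into $T$ lands everything in slices $\le t_0^{\max}-1$. Renormalizing the resulting chain back into the form $\sum_{t\in T}$ via Proposition~\ref{prop:SameTypeZero} trades non-edge terms only for non-edge terms of the same type plus further edge points, hence cannot raise $t_0^{\max}$, and the inductive hypothesis then finishes the argument.

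The main obstacle is the single ``corner'' index $j=t_0^{\max}-1$ (and, symmetrically, top-slice terms that already sit on the edge $\varepsilon_{01}$ of the face $x_3=0$): there the naive hexagon relation produces a replacement point whose reordering into $T$ again has first coordinate $t_0^{\max}$, so the complexity fails to drop. I expect to handle these boundary terms by replacing the single hexagon relation with a long hexagon relation that spans the entire width of the slice $x_0=t_0^{\max}-1$, arranged so that its two caps fall on the edges $\varepsilon_{01}$ and $\varepsilon_{02}$ of the face and therefore contribute edge points only, while the interior contributions again lie on the slices $x_0\le t_0^{\max}-1$. Verifying that the cancellations along the spine of this long hexagon behave as required, and that no new top-slice non-edge term is created, is the technical heart of the proof --- it is precisely the bookkeeping carried out in the ``butterfly'' computations of Figures~\ref{fig:Butterfly1}--\ref{fig:FlippedButterfly2}; everything else is a routine manipulation of the quad and hexagon relations already established.
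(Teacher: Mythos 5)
Your setup matches the paper's: reduce via Proposition~\ref{prop:SameTypeZero} to an element $\tau=\sum_{t\in T}k_tt\in\Ker(\alpha^*)$ in a single simplex, then use Lemmas~\ref{lemma:AllRelationsGood} and~\ref{lemma:AddLongHexagon} to attack the slice $t_0=t_0^{\max}$ with hexagon relations. The gap is the direction of your induction. You push the top slice \emph{down}, replacing $[p_j]-[p_{j-1}]$ by points on the slices $x_0=t_0^{\max}-1$ and $x_0=t_0^{\max}-2$. But a point is filed in $T$ under its \emph{largest} coordinate, and in $T$ every edge point has $t_0\geq n/2$ while face points persist down to $t_0=\lceil n/3\rceil$; so decreasing $t_0^{\max}$ drives face points toward the barycenter of the face, away from the edge points that are supposed to absorb them, and the induction has no mechanism to dispose of the terms at the bottom other than unproven cancellation. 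Moreover the bounce-back you flag at $j=t_0^{\max}-1$ is not an isolated corner case: the third coordinate $n-t_0^{\max}-j+2$ of a replacement point also reaches $t_0^{\max}$ whenever $j\leq n-2t_0^{\max}+2$, which covers a whole range of $j$ once $t_0^{\max}$ is near $n/3$. Your proposed repair --- a full-width long hexagon on the slice $x_0=t_0^{\max}-1$ with caps on the edges of the face --- dumps an entire alternating row back onto the slice $x_0=t_0^{\max}$; after reordering into $T$ that row pairs $(t_0^{\max},m,\cdot,0)$ with $(t_0^{\max},n-t_0^{\max}-m,\cdot,0)$, whose alternating signs cancel only when $n-t_0^{\max}$ is odd, and when they instead double the measure fails to drop. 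The butterfly figures you invoke belong to the proof of Lemma~\ref{lemma:SecondProperty} and do not supply this bookkeeping.

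The paper runs the induction the other way: it cancels the two terms of $C_{t_1,t_1^{\prime}}$ by a long hexagon whose two end caps are exactly those points, accepts the resulting alternating row on the slice $x_0=t_0^{\max}+1$, and uses Lemma~\ref{lemma:AllRelationsGood} to see that the reordered spill-over strictly increases $t_0^{\max}$. Since $t_0<n$ on $T$ and the top slices of $T$ contain only edge points, the process terminates. If you reverse the direction of your induction in this way, the remainder of your outline goes through essentially as the paper's proof.
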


\begin{proof}
Let $x\in H_2(\J^{\fg})$. As explained above, we can represent $x$ by an 
element $\tau+e$, where $e$ consists entirely of edge points, and 
$\tau=\sum_{t\in T} k_t t\in \Ker(\alpha^*)$. We wish to show that $\tau$ is 
a linear combination of long hexagon relations. The idea is to add (and 
subtract) long hexagon relations to $\tau$ until we end up with an element 
$\tau^\prime$ with $t_0^{\max}(\tau^{\prime})>t_0^{\max}$. This process can then 
be repeated, and since $t_0<n$ for all $t\in T$, we will eventually end 
with $0$. More precisely, we start by adding (or subtracting the long 
hexagons) with corners at the two terms involved in $C_{t_1,t_1^{\prime}}$ 
from Lemma~\ref{lemma:AddLongHexagon}. If a long hexagon has a vertex 
outside of $t$, this vertex is replaced by the unique vertex in $T$ of the 
same type. Lemma~\ref{lemma:AllRelationsGood} implies that the element 
$\tau^{\prime}$ thus obtained satisfies $t_0^{\max}(\tau^{\prime})>t_0^{\max}$.
The process is illustrated in Figure~\ref{fig:Illustration}. Note that at 
the second step, the required long hexagon has a vertex outside of $T$.
\end{proof}

\begin{figure}
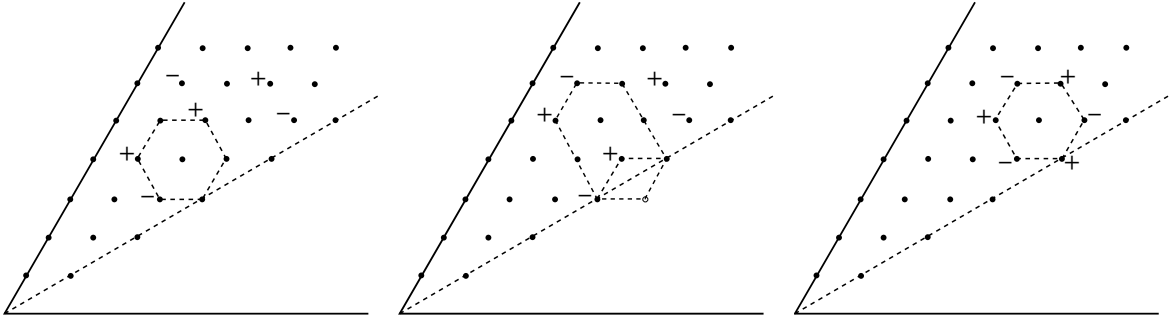

\begin{center}
\subfigure{\scalebox{0.42}{\input{KerAlphaStar1.tex}}}
\subfigure{\scalebox{0.42}{\input{KerAlphaStar2.tex}}}
\subfigure{\scalebox{0.42}{\input{KerAlphaStar3.tex}}}
\end{center}
\caption{Writing $\tau$ as a sum of long hexagon relations.}
\label{fig:Illustration}
\end{figure}

\begin{corollary}
The composition $\mu\circ\nu$ is the identity on $H_2(\J^{\fg})$. 
\end{corollary}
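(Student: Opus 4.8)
The plan is to reduce everything to the surjectivity of $\mu$. Indeed, we have already shown that $\nu\circ\mu$ is the identity on $H_1(\widehat M;\Z/n\Z)$, so if $\mu\colon H_1(\widehat M;\Z/n\Z)\to H_2(\J^{\fg})$ is onto, then for any class $x\in H_2(\J^{\fg})$ we may write $x=\mu(a)$ and compute
\[
\mu(\nu(x))=\mu\big(\nu(\mu(a))\big)=\mu\big((\nu\circ\mu)(a)\big)=\mu(a)=x,
\]
which is exactly the assertion of the Corollary. So it suffices to prove that $\mu$ is surjective.

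By Proposition~\ref{prop:EdgePointRep}, every class in $H_2(\J^{\fg})$ is represented by a cycle $\sigma\in\Ker(\alpha^*)$ that is a $\Z$--linear combination of edge points. Fix once and for all an orientation of each $1$--cell of $\T$. Since the integral point on an oriented edge at distance $k$ from its initial point coincides with the one on the reversed edge at distance $n-k$ from its initial point, every edge point occurring in $\sigma$ equals $\overline\mu_k(e)$ for a unique oriented $1$--cell $e$ and a unique $k\in\{1,\dots,n-1\}$; thus $\sigma=\sum_{e}\sum_{k}m_{e,k}\,\overline\mu_k(e)$ with $m_{e,k}\in\Z$. Using $\alpha^*\big(\overline\mu_k(e)\big)=\iota(e)\otimes e_{\,n-k}+\tau(e)\otimes e_{\,k}$ (the formula behind the computation $\alpha^*(z,w;k)=z\otimes e_k+w\otimes e_{n-k}$ used in the proof of Proposition~\ref{prop:H1}), the hypothesis $\alpha^*(\sigma)=0$ becomes the statement that, for every $0$--cell $v$ and every $k$, the coefficient of $v\otimes e_k$ contributed by the edge points of $\sigma$ incident to $v$ vanishes.

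The core step is to turn this balancing condition into a decomposition of $[\sigma]$. The tools are the consecutive--edge identity $\mu_k(e_1+e_2)=\overline\mu_k(e_1)-\overline\mu_{n-k}(e_2)$ in $C_1^{\fg}(\T)\big/\Im(\beta^*)$ of Lemma~\ref{lemma:ConsecutiveEdges} and the tooth--flip invariance of Lemma~\ref{lemma:FlipTeeth}; read along a closed edge path $e_1+\dots+e_{2l}$ of even length the first identity gives $\mu_k(e_1+\dots+e_{2l})\equiv\sum_{i=1}^{l}\big(\overline\mu_k(e_{2i-1})-\overline\mu_{n-k}(e_{2i})\big)\pmod{\Im(\beta^*)}$. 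The plan is to use $\alpha^*(\sigma)=0$ to organize the edge-point terms of $\sigma$ into families matching such alternating patterns: a term $\overline\mu_k(e_1)$ can be rewritten, modulo $\Im(\beta^*)$, as $\mu_k(e_1+e_2)+\overline\mu_{n-k}(e_2)$, i.e.\ it can be transported to an adjacent $1$--cell at the cost of a $\mu$--term, and since the coefficients of $v\otimes e_k$ cancel at every $0$--cell one may follow these balancings around the $1$--skeleton of $\widehat M$ until they close up. This assembles $\sigma$, modulo $\Im(\beta^*)$, into a $\Z$--linear combination $\sum_k\mu_k(\gamma_k)$ of images of edge $1$--cycles $\gamma_k\in Z_1(\widehat M)$. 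Hence $[\sigma]=\sum_k\mu(\gamma_k\otimes e_k)$ lies in the image of $\mu$, and we are done.

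The step I expect to be the main obstacle is precisely this last decomposition. One must track orientations carefully (each edge point contributes the colours $k$ and $n-k$ to its two endpoints, with the minus sign of Lemma~\ref{lemma:ConsecutiveEdges} falling in exactly the right place), handle loop $1$--cells where a single $1$--cell contributes two slots at one $0$--cell, and cope with the fact that the coefficients $m_{e,k}$ are arbitrary integers, so that the "matching'' is really a decomposition of a balanced integral flow on the $1$--skeleton into cycles. Everything else — the reduction to surjectivity and the conclusion $\mu\circ\nu=\mathrm{id}$ — is formal once surjectivity is established.
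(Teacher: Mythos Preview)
Your proposal is correct and follows essentially the same route as the paper: reduce to edge points via Proposition~\ref{prop:EdgePointRep}, then use the vanishing of $\alpha^*$ to decompose the resulting combination of edge points into alternating sums $\sum_i\big(\overline\mu_k(e_{2i-1})-\overline\mu_{n-k}(e_{2i})\big)$ along edge cycles, which by Lemma~\ref{lemma:ConsecutiveEdges} equal $\mu_k$ of those cycles. The paper handles your ``main obstacle'' with the same one-line appeal you anticipated---``compare with the standard proof that cycles in $C_1(M;\Z)$ are generated by edge cycles''---and then computes $\mu\circ\nu$ directly on each such piece rather than passing through the (equivalent) surjectivity reformulation.
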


\begin{proof}
By Proposition~\ref{prop:EdgePointRep}, one may represent a class in 
$H_2(\J^{\fg})$ by a linear combination $x$ of edge points. 
Since $\alpha^*(x)=0$, $x$ must be a linear combination of elements of the form 
\begin{equation}
\sigma=\sum_{i=1}{l}\big(\overline\mu_k(e_{2i})-\overline\mu_{n-k}(e_{2i-1})\big) 
\end{equation}
To see this compare with the standard proof that cycles in $C_1(M;\Z)$ are 
generated by edge cycles. We now have
\begin{equation}
\mu\circ\nu(\sigma)=\mu\big((e_1+\dots+e_{2l})\otimes k\big)
=\mu_k(e_1+\dots e_{2l})=\sigma,
\end{equation} 
where the first equality follows from~\eqref{eq:NuMu}, and the third from 
Lemma~\ref{lemma:ConsecutiveEdges}.
\end{proof}

\subsection{Computation of $H_4(\J^{\fg})$ and $H_5(\J^{\fg})$.}
\label{sec:UniversalCoeff}

Since $\J^{\fg}$ is self dual, the universal coefficient theorem implies that
\begin{equation}
\label{eq:SelfDual}
H_k(\J^{\fg})=H_{6-k}((\J^{\fg})^*)\cong \Hom(H_{6-k}(\J^{\fg}),\Z)\oplus
\Ext(H_{6-k-1}(\J^{\fg}),\Z).
\end{equation}
It thus follows that $H_5(\J^{\fg})=0$ and that $H_4(\J^{\fg})=\Z/n\Z$.



\begin{remark}
One can show that the sum $\tau$ of all integral points of $\T$ generates 
$H_4(\J^{\fg})=\Z/n\Z$. If $M$ has a single boundary component, corresponding 
to the $0$-cell $x$ of $\T$, we have
\begin{equation}
n\tau =\alpha\big(\sum_{i=1}^{n-1}i x\otimes e_i\big).
\end{equation} 
We shall not need this, so we leave the proof to the reader.
\end{remark}




\section{The middle homology group}
\label{sec:MiddleHomology}

By~\eqref{eq:SelfDual}, the torsion in $H_3(\J^{\fg})$ equals 
$\Ext(H_1(\widehat M;\Z/n\Z))\cong H^1(\widehat M;\Z/n\Z)$. We now analyze 
the free part. Following Neumann~\cite[Section~4]{NeumannComb}, the idea is 
to construct maps
\begin{equation}
\delta\colon H_1(\partial M;\Z^{n-1})\to H_3(\J^{\fg}),\qquad 
\gamma\colon H_3(\J^{\fg})\to H_1(\partial M;\Z^{n-1}),
\end{equation}
which are adjoint with respect to the intersection form $w$ on 
$H_1(\partial M;\Z^{n-1})$ and the form $\Omega$ on $H_3(\J^{\fg})$. When 
$n=2$, our $\delta$ and $\gamma$ agree with those of \cite{NeumannComb}.

\subsection{Cellular decompositions of the boundary}

The ideal triangulation $\T$ of $M$ induces a decomposition of $M$ into 
truncated simplices such that the cut-off triangles triangulate the boundary 
of $M$. We call this decomposition of $\partial M$ the \emph{standard 
decomposition} and denote it by $\T^\Delta_{\partial M}$. The superscript 
$\Delta$ is to stress that the $2$-cells are triangles. We shall also 
consider another decomposition of $\partial M$, the \emph{polygonal 
decomposition} $\T^{\pentagon}_{\partial M}$, which is obtained from 
$\T^\Delta_{\partial M}$ by replacing the link of each vertex $v$ of 
$\T^\Delta_{\partial M}$ with the polygon whose vertices are the midpoints of 
the edges incident to $v$. The polygonal decomposition thus has a vertex 
for each edge of $\T^\Delta_{\partial M}$, $3$ edges for each face of 
$\T^\Delta_{\partial M}$, and $2$ types of faces; a \emph{triangular face} 
for each face of $\T^\Delta_{\partial M}$, and a \emph{polygonal face} (which 
may or may not be a triangle) for each vertex of $\T^\Delta_{\partial M}$. 

\begin{figure}[htb]
\begin{center}
\begin{minipage}[b]{0.47\textwidth}
\begin{center}
\scalebox{0.65}{\input{StandardDecomposition.tex}}
\end{center}
\end{minipage}
\hfill
\begin{minipage}[b]{0.47\textwidth}
\begin{center}
\scalebox{0.65}{\input{PolygonalDecomposition.tex}}
\end{center}
\end{minipage}\\
\begin{minipage}[t]{0.47\textwidth}
\begin{center}
\caption{The standard decomposition.}
\end{center}
\end{minipage}
\hfill
\begin{minipage}[t]{0.47\textwidth}
\begin{center}
\caption{The polyhedral decomposition.}
\end{center}
\end{minipage}
\end{center}
\end{figure}

We denote the cellular chain complexes corresponding to the two 
decompositions by $C_*(\T^\Delta_{\partial M})$ and 
$C_*(\T^{\pentagon}_{\partial M})$, respectively. Hence, we have canonical 
isomorphisms
\begin{equation}
H_*\big(C_*(\T^{\pentagon}_{\partial M})\big)
=H_*\big(C_*(\T^\Delta_{\partial M})\big)=H_*(\partial M).
\end{equation}

\subsubsection{Labeling and orientation conventions}
\label{sec:LabelingConventions}

We orient $\partial M$ with the counter-clockwise orientation as viewed 
from an ideal point. The edges of $\T^{\pentagon}_{\partial M}$ each lie in a 
unique simplex of $\T$ and we orient them in the unique way that agrees with 
the counter-clockwise orientation for a polygonal face, and the clockwise 
orientation for a triangular face. The triangular faces of 
$\T^{\pentagon}_{\partial M}$ are thus oriented opposite to the orientation 
inherited from $\partial M$.
An edge of $\T^\Delta_{\partial M}$ is only naturally oriented after specifying 
which simplex it belongs to.

We denote the triangular $2$-faces of $\T^{\pentagon}_{\partial M}$ and 
$\T^{\Delta}_{\partial M}$ by $\tau^i_\Delta$ and $T^i_\Delta$, respectively, 
where $i$ is the nearest vertex. The polygonal $2$-face of 
$\T^{\pentagon}_{\partial M}$ whose boundary edges are $e_{\Delta_l}^{i_lj_l}$ is 
denoted by $p^{\{i_l,j_l\}}$. The (oriented) edge of $\T^{\pentagon}_{\partial M}$ 
near vertex $i$ and perpendicular to edge $ij$ of $\Delta$ is denoted by 
$e^{ij}_\Delta$, and the (oriented) edge of $\T^{\Delta}_{\partial M}$ near vertex 
$i$ and parallel to the edge $jk$ of $\Delta$ is denoted by $E^{ijk}_{\Delta}$.  
The vertex of $\T^{\pentagon}_{\partial M}$ near the $i$th vertex of $\Delta$ on 
the face opposite the $j$th vertex is denoted by $v^{ij}_\Delta$, and the 
vertex of $\T^{\Delta}_{\partial M}$ near the $i$th vertex on the edge $ij$ is 
denoted by $V^{ij}_\Delta$. The subscript $\Delta$ will occationally be 
omitted (e.g.~when only one simplex is involved).

\begin{figure}[htb]
\scalebox{1.4}{\input{Labelings.tex}}
\caption{Labeling of vertices, edges and faces of $\T^\Delta_{\partial M}$ and 
$\T^{\pentagon}_{\partial M}$.}
\label{fig:EdgeVertexFace}
\end{figure}

\subsection{The intersection pairing}

Consider the pairing
\begin{equation}
\label{eq:IntersectionPairing}
\iota\colon C_1(\T^{\pentagon}_{\partial M})\times C_1(\T^\Delta_{\partial M})\to \Z,
\end{equation}
given by counting intersections with signs (see Figures~\ref{fig:Positive} 
and \ref{fig:Negative}). We have
\begin{equation}
\iota(e^{ij}_\Delta,E_\Delta^{i^\prime j^\prime k^\prime})=
\begin{cases} 
1&\text{if } i=i^\prime, j=k^\prime\\
-1& \text{if } i=i^\prime, j=j^\prime\\
0& \text{otherwise.}
\end{cases}
\end{equation}

\begin{figure}[htb]
\begin{center}
\begin{minipage}[b]{0.45\textwidth}
\begin{center}
\scalebox{0.6}{\input{PositiveIntersection.tex}}
\end{center}
\end{minipage}
\hfill
\begin{minipage}[b]{0.45\textwidth}
\begin{center}
\scalebox{0.6}{\input{NegativeIntersection.tex}}
\end{center}
\end{minipage}\\
\begin{minipage}[t]{0.45\textwidth}
\begin{center}
\caption{Positive intersection.}
\label{fig:Positive}
\end{center}
\end{minipage}
\hfill
\begin{minipage}[t]{0.45\textwidth}
\begin{center}
\caption{Negative intersection.}
\label{fig:Negative}
\end{center}
\end{minipage}
\end{center}
\end{figure}

\begin{lemma}
The pairing~\eqref{eq:IntersectionPairing} induces the intersection pairing 
on $H_1(\partial M)$.
\end{lemma}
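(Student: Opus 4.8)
The plan is to realize the combinatorial pairing $\iota$ by transverse curves on $\partial M$ and then invoke the classical fact that on an oriented surface the intersection form is the signed count of intersection points of transverse $1$-cycles, which is a homological invariant.

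First I would fix compatible geometric realizations of the two cell structures: inside each cut-off triangle of the decomposition of $M$ induced by $\T$, realize the relevant portion of $\T^\Delta_{\partial M}$ as the sides of that triangle and the relevant portion of $\T^{\pentagon}_{\partial M}$ as straight arcs joining midpoints of its sides. With such choices an edge $e^{ij}_\Delta$ of $\T^{\pentagon}_{\partial M}$ and an edge $E_\Delta^{i^\prime j^\prime k^\prime}$ of $\T^\Delta_{\partial M}$ meet transversally, and inside a single cut-off triangle near vertex $i$ the arc $e^{ij}_\Delta$ meets exactly one of the edges $E^{ijk}_\Delta$ and in exactly one point; these are precisely the local configurations of Figures~\ref{fig:Positive} and~\ref{fig:Negative}. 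The only point requiring care is that the orientation conventions of Section~\ref{sec:LabelingConventions}---in particular that the triangular faces of $\T^{\pentagon}_{\partial M}$ carry the orientation opposite to the one inherited from $\partial M$---assign to these intersection points exactly the signs recorded in the formula for $\iota(e^{ij}_\Delta,E_\Delta^{i^\prime j^\prime k^\prime})$. This is a finite inspection, and it is where the real bookkeeping lies.

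With the realizations fixed, let $a\in C_1(\T^{\pentagon}_{\partial M})$ and $b\in C_1(\T^\Delta_{\partial M})$ be $1$-cycles representing classes in $H_1(\partial M)$; recall from the canonical isomorphisms noted above that both complexes compute $H_*(\partial M)$. Their geometric realizations $|a|$ and $|b|$ are transverse closed $1$-cycles on the oriented surface $\partial M$, and by construction the algebraic count of the points of $|a|\cap|b|$ equals $\iota(a,b)$. Since this algebraic intersection number depends only on the homology classes of $|a|$ and $|b|$, it follows at once that $\iota$ vanishes whenever one of its arguments is a boundary, hence descends to a well-defined bilinear pairing on $H_1(\partial M)$, and that pairing is by definition the topological intersection form. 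Equivalently---if one prefers a purely combinatorial route---one may verify $\iota(\partial F,\cdot)=0$ for every $2$-cell $F$ of $\T^{\pentagon}_{\partial M}$ and $\iota(\cdot,\partial F^\prime)=0$ for every $2$-cell $F^\prime$ of $\T^\Delta_{\partial M}$ directly from the displayed formula, treating triangular and polygonal faces separately, and then match the resulting pairing with the intersection form on a set of generating edge cycles. In either approach the main obstacle is the orientation bookkeeping just described; once the sign conventions are pinned down the argument is formal.
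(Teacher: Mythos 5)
Your argument is correct, but it reaches the key step---that cycles pair trivially with boundaries---by a genuinely different route than the paper. The paper, like you, treats the identification of the induced pairing with the intersection form as immediate once descent is known (``the pairing counts signed intersection numbers''); its actual content is a self-contained, chain-level verification of descent, carried out by exhibiting auxiliary maps $\Phi$ and $\Psi$ from the $2$-chains of $\T^{\pentagon}_{\partial M}$ and $\T^{\Delta}_{\partial M}$ to duals of $0$-chains, satisfying $\iota(\partial\tau,E)=\Phi(\tau)(\partial E)$ and $\iota(e,\partial T)=\Psi(T)(\partial e)$; vanishing of $\partial E$ (resp.\ $\partial e$) then kills the pairing against any boundary. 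You instead obtain descent for free from the classical homological invariance of the signed intersection number of transverse $1$-cycles on an oriented surface. That is legitimate and conceptually cleaner, but it imports an external topological fact where the paper stays entirely inside the two cellular chain complexes; your sketched combinatorial alternative ($\iota(\partial F,\cdot)=0$ for each $2$-cell, checked face by face) is essentially the paper's proof without the packaging via $\Phi$ and $\Psi$. One local correction to your geometric description: for fixed $i$ the arc $e^{ij}_\Delta$ pairs nontrivially with \emph{two} unoriented sides of the cut-off triangle $T^i_\Delta$, namely the two adjacent to the vertex $V^{ij}_\Delta$ (the side carrying the orientations $E^{ijk}_\Delta$ and $E^{ikj}_\Delta$, and the side carrying $E^{ijl}_\Delta$ and $E^{ilj}_\Delta$), not with a single one; moreover its endpoints lie on those sides, so a small perturbation is needed before the crossings become honest transverse intersection points. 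This does not damage your argument---you correctly defer the sign bookkeeping to a finite inspection---but the count as stated is off.
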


\begin{proof}
Since the pairing counts signed intersection numbers, all we need to prove 
is that pairing cycles with boundaries gives zero. Consider the maps (star 
denotes dual)
\begin{equation}
\Psi\colon C_2(\T^\Delta_{\partial M})\to C_0(\T^{\pentagon})^*,\qquad 
\Phi\colon C_2(\T^{\pentagon}_{\partial M})\to C_0(\T^{\pentagon}_{\partial M})^*,
\end{equation}
where $\Psi$ takes $T^i$ to $-(v^{ij})^*-(v^{ik})^*-(v^{il})^*$ and $\Phi$ 
takes a triangular face to $0$ and a polygonal face to (the dual of) its 
midpoint. One now easily checks that
\begin{equation}
\iota(\partial\tau,E)=\Phi(\tau)(\partial E),\qquad 
\iota(e,\partial T)=\Psi(T)(\partial e)
\end{equation}
from which the result follows.
\end{proof}

Consider the chain complexes
\begin{equation}
C_*(\T^{\pentagon}_{\partial M};\Z^{n-1})=
C_*(\T^{\pentagon}_{\partial M})\otimes\Z^{n-1},\qquad 
C_*(\T^\Delta_{\partial M};\Z^{n-1})=C_*(\T^\Delta_{\partial M})\otimes\Z^{n-1}.
\end{equation}

The intersection form induces a non-degenerate pairing
\begin{equation}
\omega\colon C_1(\T^{\pentagon}_{\partial M};\Z^{n-1})\times 
C_1(\T^{\pentagon}_{\partial M};\Z^{n-1})\to \Z,\quad 
(\alpha\otimes v,\beta\otimes w)\mapsto \iota(\alpha,\beta)\langle v,w\rangle,
\end{equation}
where $\langle,\rangle$ is the standard inner product on $\Z^{n-1}$. This 
pairing induces a pairing
\begin{equation}
\omega\colon H_1(\partial M;\Z^{n-1})\times H_1(\partial M;\Z^{n-1})\to \Z.
\end{equation}

\subsection{Definition of $\delta$}

Define
\begin{equation}
\delta\colon C_1(\T^{\pentagon}_{\partial M};\Z^{n-1})\to J^{\fg}(\T),
\qquad e^{ij}_\Delta\otimes e_r\mapsto \sum_{t_i=r}\sum_{s+e=t}t_j(s,e)_\Delta.
\end{equation}

\begin{figure}[htb]
\scalebox{0.8}{\input{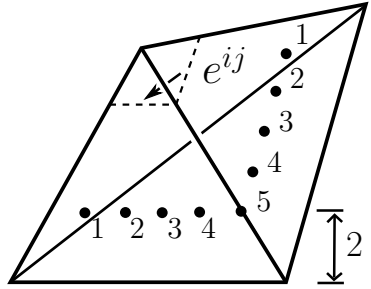}}
\caption{$\delta(e^{ij}\otimes e_2)$ for $n=7$. Each dot represents an 
integral point $t$ contributing a term $\sum_{s+e=t}(s,e)$. Interior terms 
are not shown, c.f.~Remark~\ref{rm:InteriorIgnore}.}
\label{fig:EdgeVertexFace2}
\end{figure}

Note that $\delta$ preserves rotational symmetry, i.e.~it is a map of 
$\Z[A_4]$-modules, where $A_4$ acts trivially on $\Z^{n-1}$.
\begin{proposition}
The map $\delta$ induces a map
\begin{equation}
\delta\colon H_1(\partial M;\Z^{n-1})\to H_3(\J^{\fg}).
\end{equation}
\end{proposition}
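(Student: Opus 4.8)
The plan is to show that $\delta$ is a chain map from $C_1(\T^{\pentagon}_{\partial M};\Z^{n-1})$ (viewed as sitting in degree $3$ of the chain complex computing $H_1(\partial M;\Z^{n-1})$) into $\J^{\fg}$ at the middle term. Concretely, I will verify two things: first, that $\delta$ lands in $\Ker(\beta^*)$; and second, that $\delta$ kills boundaries, i.e.\ $\delta(\partial_2 p) \in \Im(\beta)$ for every $2$-cell $p$ of $\T^{\pentagon}_{\partial M}$. Together these give the induced map $H_1(\partial M;\Z^{n-1}) = \ker(\partial_1)/\im(\partial_2) \to \ker(\beta^*)/\im(\beta) = H_3(\J^{\fg})$. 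Since $\delta$ is a map of $\Z[A_4]$-modules and the relevant objects are built simplex-by-simplex with rotational symmetry, in each verification it suffices to treat one representative generator, e.g.\ $e^{01}_\Delta\otimes e_r$, and one representative $2$-cell.

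First I would check $\beta^*\circ\delta = 0$. Using the formula $\delta(e^{01}_\Delta\otimes e_r) = \sum_{t_0 = r}\sum_{s+e=t} t_1 (s,e)_\Delta$ and the explicit formula~\eqref{eq:BetaStar} for $\beta^*$, one computes $\beta^*\delta(e^{01}_\Delta\otimes e_r)$ as a signed sum of integral points $[(u,\Delta)]$ with coefficients that are $\Z$-linear in the weights $t_1$. The claim is that the contributions telescope: for a fixed integral point $u$ in the slice $\{u_0 = r\}$ or an adjacent slice, the several edges $(s,\varepsilon_{ij})$ whose $\beta^*$-image involves $u$ appear with coefficients $t_1$ that cancel, exactly as in Neumann's $n=2$ computation (where all weights are $1$ and it is the purely combinatorial cancellation already recorded in the lemma $\beta^*\circ\beta = 0$ and its companions). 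This is a direct but slightly tedious bookkeeping argument; I would organize it by grouping the terms of $\beta^*\delta$ by the value of the second coordinate (or, after the rotation, by the coordinate being differentiated against), so that each group is an alternating sum over a quad and visibly vanishes. The weights $t_1$ are constant along the directions in which $\beta^*(s,\varepsilon_{01})$ moves a point (namely the $\varepsilon_{03},\varepsilon_{12},\varepsilon_{02},\varepsilon_{13}$ directions all fix the $1$-coordinate difference appropriately), which is the structural reason the weighted sum still cancels.

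Second I would check that $\delta$ vanishes on boundaries. There are two kinds of $2$-cells in $\T^{\pentagon}_{\partial M}$: triangular faces $\tau^i_\Delta$ and polygonal faces $p^{\{i,j\}}$. For a triangular face $\tau^i_\Delta$, whose boundary consists of three edges $e^{ij}_\Delta$ with $j$ ranging over the other three vertices (with appropriate signs), $\delta(\partial \tau^i_\Delta) = \sum_{t_i = r}\sum_{s+e=t}(\sum_{j\neq i}\pm t_j)(s,e)_\Delta$; since $\sum_{j}t_j = n$ is constant and the signs are arranged so $\sum_{j\neq i}\pm t_j$ collapses to (a multiple of) $n - t_i = n - r$, a constant, what remains is $(n-r)$ times $\sum_{t_i = r}\sum_{s+e=t}(s,e)_\Delta = (n-r)\,\beta\big(\sum_{t_i=r}[(t,\Delta)]\big)$, which lies in $\Im(\beta)$ — actually I expect the signs to make it vanish outright, or at worst land in $\Im(\beta)$. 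For a polygonal face $p^{\{i,j\}}$, the boundary is a cycle of edges $e^{ij}_{\Delta_l}$ running around the link of an edge of $\T$; here the key point is that the integral points appearing (those on the edge $ij$ of the various simplices $\Delta_l$) are glued together into a single edge of $\T$, so the sum $\sum_l \delta(e^{i_l j_l}_{\Delta_l}\otimes e_r)$ becomes $\beta$ applied to a combination of edge points of $\T$ lying on that edge, hence is in $\Im(\beta)$. I expect the polygonal-face case to be the main obstacle: one must carefully track the orientation conventions of Section~\ref{sec:LabelingConventions} (the triangular faces are oriented oppositely to $\partial M$, the edges $e^{ij}$ have a prescribed orientation relative to polygonal versus triangular faces) to be sure the signs around the link cycle are consistent, and one must use that the weight on $(t,\Delta_l)$ depends only on the integral point $[t]$ of $\T$ (which it does, since $t_i = r$ and $t_j$ are intrinsic to the edge point) so that the telescoping across the link is legitimate. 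Once both face types are handled, $\delta$ descends to homology and the proposition follows.
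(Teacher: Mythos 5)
Your second step is essentially the paper's argument and is fine: for a triangular face $\tau^i_\Delta$ the weights sum to $\sum_{j\neq i}t_j=n-r$, giving $(n-r)\,\beta\big(\sum_{t_i=r}[t]\big)\in\Im(\beta)$ (it does \emph{not} vanish outright), and for a polygonal face commutativity is immediate from the definitions. The gap is in your first step: it is \emph{false} that $\beta^*\circ\delta=0$ on the chain level, and the cancellation you describe does not occur. Writing $\beta^*\circ\delta(e^{ij}\otimes e_r)=\sum_{t_i=r}t_j\,\beta^*\big(\sum_{s+e=t}(s,e)\big)$, the interior points $t$ indeed contribute nothing (there $\sum_{s+e=t}(s,e)=\beta([t])$ and $\beta^*\circ\beta=0$), but each \emph{face} point $t$ in the slice with $t_j\neq 0$ contributes $t_j$ times the hexagon relation of Lemma~\ref{lemma:HexagonRelation} --- a nonzero alternating sum of six points of that face --- and the edge point on $\varepsilon_{ij}$ contributes a nonzero elementary quad relation. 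These terms survive. For instance, for $n=3$ the element $\beta^*\delta(e^{01}\otimes e_1)$ contains the face point $[(0,1,2,0)]$ with coefficient $1$, coming from the hexagon centered at $(1,1,1,0)$ and from nowhere else.

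What rescues the argument, and what the paper proves instead, is the weaker statement that $\beta^*\circ\delta$ factors through $\partial$. The surviving hexagon and quad terms are supported entirely on the two faces of $\Delta$ containing the edge $\varepsilon_{ij}$ (the third face met by the slice is $\{t_j=0\}$, where the weight $t_j$ vanishes), and these two faces are exactly the ones determined by the two endpoints of $e^{ij}$ in $\T^{\pentagon}_{\partial M}$. One can therefore define a map $\delta_0\colon C_0(\T^{\pentagon}_{\partial M};\Z^{n-1})\to C_1^{\fg}(\T)$ with $\beta^*\circ\delta=\delta_0\circ\partial$, so that $\delta$ takes \emph{cycles} (not arbitrary chains) into $\Ker(\beta^*)$; combined with your second step this gives the full commutative diagram and the induced map on homology. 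So your framework is correct, but the claim ``$\delta$ lands in $\Ker(\beta^*)$'' must be replaced by ``$\beta^*\circ\delta$ is of the form $\delta_0\circ\partial$''.
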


\begin{proof}
The result will follow by proving that there is a commutative diagram
\begin{equation}
\cxymatrix{{C_2(\T^{\pentagon}_{\partial M};\Z^{n-1})\ar[r]^-
\partial\ar[d]^{\delta_2}&C_1(\T^{\pentagon}_{\partial M};\Z^{n-1})\ar[r]^-
\partial\ar[d]^\delta&C_0(\T^{\pentagon}_{\partial M};\Z^{n-1})\ar[d]^{\delta_0}\\
C^{\fg}_1(\T)\ar[r]^-\beta&J^{\fg}(\T)\ar[r]^-{\beta^*}&C^{\fg}_1(\T).}}
\end{equation}

Define $\delta_2$ by
\begin{equation}
\rho^{\{i_lj_l\}}\otimes e_r\mapsto \sum_{l=1}^m\sum_{t_{i_l}=r}t_{j_l}t_{\Delta_l},
\qquad \tau^i_\Delta\otimes e_r\mapsto \sum_{t_i=r}(n-r)t_\Delta 
\end{equation}
Commutativity of the lefthand square is obvious for polygonal edges, and for 
triangular edges it follows from
\begin{equation}
\begin{aligned}
\delta\circ\partial(\tau^i_\Delta\otimes e_r)=&
\sum_{j\neq i}\delta(e^{ij}_\Delta\otimes e_r)\\
=&\sum_{t_i=r}\sum_{e+s=t}\sum_{j\neq i}t_j(e,s)\\
=&\sum_{t_i=r}\sum_{e+s=t}(n-t_i)(e,s)\\
=&\beta\circ\delta_2(\tau^i_\Delta\otimes e_r).
\end{aligned}
\end{equation}
Note that $\beta^*\circ\delta(e^{ij}\otimes e_r)
=\sum_{t_i=r}t_j\beta^*(\sum_{s+e}(s,e))$, which is a sum of hexagon relations 
(interior terms cancel). These involve only points on the faces determined 
by the start and end point of $e$, proving the existence of $\delta_0$. 
\end{proof}

\begin{remark}
Bergeron, Falbel and Guilloux~\cite{BFG} consider a map 
$H_3(\partial M;\Z^2)\to H_3(\J^{\fg})$ defined when $n=3$ using a different, 
but isomorphic chain complex. One can show that their map equals 
$\delta\circ\big(\id\otimes\left(\begin{smallmatrix}1&0\\0&2\end{smallmatrix}
\right)\big)$.
\end{remark}

\begin{remark}
\label{rm:InteriorIgnore}
In the formula for $\delta$ interior points may be ignored. This is because 
if $t$ is an interior point, then $\sum_{s+e=t}t_j(s,e)=t_j\beta(t)$.
\end{remark}

\subsection{Definition of $\gamma$}

The group $A_4$ acts transitively on the set of pairs of opposite edges of 
a simplex with stabilizer
\begin{equation}
D_4=\langle\id,(01)(23),(02)(13),(03)(12)\rangle\subset A_4.
\end{equation}
Hence, there is a one-one correspondence between $D_4$-cosets in $A_4$ and 
pairs of opposite edges. Explicitly,
\begin{equation}
\begin{gathered}
\Phi\colon A_4\big/D_4\mapsto\big\{\{\varepsilon_{01},\varepsilon_{23}\},
\{\varepsilon_{12},\varepsilon_{03}\},\{\varepsilon_{02},\varepsilon_{13}\}\big\}
\\
D_4\mapsto\{\varepsilon_{01},\varepsilon_{23}\},\qquad 
(012)D_4\mapsto \{\varepsilon_{12},\varepsilon_{03}\},\qquad 
(021)D_4\mapsto \{\varepsilon_{02},\varepsilon_{13}\}.
\end{gathered}
\end{equation}
Let $\bar e$ denote the opposite edge of $e$. Consider the map
\begin{equation}
\begin{gathered}
\gamma\colon J^{\fg}(\T)\to C_1(\T^\Delta_{\partial M};\Z^{n-1}) \\
(s,e)\mapsto\sum_{\sigma\in\Phi^{-1}(\{e,\bar e\})}E^{\sigma(1)\sigma(2)\sigma(3)}
\otimes v_{s,\sigma(1)},\qquad v_{s,i}=e_{s_i+1}-e_{s_i}
\end{gathered}
\end{equation}

The map $\gamma$ is illustrated in Figures~\ref{fig:Gamma01}, 
\ref{fig:Gamma12}, and \ref{fig:Gamma02}. For example, we have 
\begin{equation}
\gamma(s,\varepsilon_{01})=\gamma(s,\varepsilon_{23})
=E^{032}\otimes v_{s,0}+E^{123}\otimes v_{s,1}+E^{210}\otimes v_{s,2}
+E^{301}\otimes v_{s,3}.
\end{equation}


\begin{figure}[htb]
\begin{center}
\begin{minipage}[b]{0.3\textwidth}
\begin{center}
\scalebox{0.77}{\input{Gamma01.tex}}
\end{center}
\end{minipage}
\begin{minipage}[b]{0.3\textwidth}
\begin{center}
\scalebox{0.77}{\input{Gamma12.tex}}
\end{center}
\end{minipage}
\begin{minipage}[b]{0.3\textwidth}
\begin{center}
\scalebox{0.77}{\input{Gamma02.tex}}
\end{center}
\end{minipage}
\\
\begin{minipage}[t]{0.33\textwidth}
\begin{center}
\caption{$\gamma(s,\varepsilon_{01})$.}\label{fig:Gamma01}
\end{center}
\end{minipage}
\hspace{-3mm}
\begin{minipage}[t]{0.33\textwidth}
\begin{center}
\caption{$\gamma(s,\varepsilon_{12})$.}\label{fig:Gamma12}
\end{center}
\end{minipage}
\hspace{-3mm}
\begin{minipage}[t]{0.33\textwidth}
\begin{center}
\caption{$\gamma(s,\varepsilon_{02})$.}\label{fig:Gamma02}
\end{center}
\end{minipage}
\end{center}
\end{figure}

To see that $\gamma$ is well defined, note that 
$(s,\varepsilon_{01})+(s,\varepsilon_{12})+(s,\varepsilon_{02})$ maps to the 
boundary of $\sum_{i=0}^3T^i\otimes v_{s,i}$.

\begin{lemma}
$\gamma$ takes cycles to cycles and boundaries to boundaries.
\end{lemma}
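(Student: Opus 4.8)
The plan is to realize $\gamma$ as the middle map of a commutative diagram
\[
\cxymatrix{{C^{\fg}_1(\T)\ar[r]^-\beta\ar[d]^{\gamma_2}&J^{\fg}(\T)\ar[r]^-{\beta^*}\ar[d]^\gamma&C^{\fg}_1(\T)\ar[d]^{\gamma_0}\\
C_2(\T^\Delta_{\partial M};\Z^{n-1})\ar[r]^-\partial&C_1(\T^\Delta_{\partial M};\Z^{n-1})\ar[r]^-\partial&C_0(\T^\Delta_{\partial M};\Z^{n-1}),}}
\]
for suitable maps $\gamma_0$ and $\gamma_2$, exactly as in the proof of the preceding proposition for $\delta$. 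Granting the diagram, commutativity of the right square gives $\partial\circ\gamma=\gamma_0\circ\beta^*$, so $\gamma$ sends $\Ker(\beta^*)$ — the cycles of $J^{\fg}(\T)$ in $\J^{\fg}$ — into $\Ker(\partial)$, i.e.\ cycles to cycles; commutativity of the left square gives $\gamma\circ\beta=\partial\circ\gamma_2$, so $\gamma$ sends $\Im(\beta)$ — the boundaries — into $\Im(\partial)$, i.e.\ boundaries to boundaries. In particular $\gamma$ then descends to a map $H_3(\J^{\fg})\to H_1(\partial M;\Z^{n-1})$, as needed for its adjointness with $\delta$.

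For $\gamma_0$, on a non-vertex integral point $t=(t_0,t_1,t_2,t_3)$ of a simplex $\Delta$ I would put
\[
\gamma_0(t)=\sum_{\substack{0\le i,j\le 3\\ i\ne j}}t_j\,\big(V^{ij}_\Delta\otimes e_{t_i}\big)\in C_0(\T^\Delta_{\partial M};\Z^{n-1}),
\]
and define $\gamma_2$ analogously as a $\Z^{n-1}$-combination of the truncation triangles $T^i_\Delta$, with coefficients forced by the computation below. Both maps are well defined on $C^{\fg}_1(\T)$ — independent of the representative $(t,\Delta)$ of an integral point — because a summand $V^{ij}_\Delta\otimes e_{t_i}$ is nonzero only when $t_j\ne 0$ and $t_i\notin\{0,n\}$ (recall $e_0=e_n=0$), which forces both vertices $i$ and $j$ of $\Delta$ to lie on a face of $\Delta$ through which the integral point is visible, and on such faces the face pairings of $\T$ identify the relevant vertices of $\T^\Delta_{\partial M}$ compatibly with the coordinates $t_i$.

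The verification of commutativity uses the $A_4$-equivariance of $\beta$, $\beta^*$ and $\gamma$ to reduce to the generator $(s,\varepsilon_{01})_\Delta$ (for the right square) and to a single integral point (for the left square). For the right square one expands $\beta^*(s,\varepsilon_{01})$ by~\eqref{eq:BetaStar}, applies $\gamma_0$, and compares the coefficient of each vertex $V^{ij}_\Delta$ with that in $\partial\gamma(s,\varepsilon_{01})$ computed from the four-term formula for $\gamma$; every comparison collapses to the telescoping identity $g(a+1,b)+g(a,b+1)-g(a+1,b+1)-g(a,b)=\pm(e_{a+1}-e_a)$ with $g(a,b)=b\,e_a$, matching $v_{s,i}=e_{s_i+1}-e_{s_i}$. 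For the left square the key observation is that for a face point $t$ the three edges $E^{0jk}_\Delta$ occurring in $\gamma\big(\sum_{s+e=t}(s,e)\big)$ close up to $\partial T^0_\Delta$, so that $\gamma\circ\beta$ applied to the formal sum of edges of subsimplices with a given midpoint organizes into $\partial$ of an explicit combination of the $T^i_\Delta$, which one reads off as $\gamma_2$. The main obstacle is precisely this last step: unlike the identity $\gamma(s,\varepsilon_{01})+\gamma(s,\varepsilon_{12})+\gamma(s,\varepsilon_{02})=\partial\big(\sum_i T^i_\Delta\otimes v_{s,i}\big)$ used to see $\gamma$ is well defined, the six subsimplices incident to an integral point carry six different small simplices $s$, so pushing $\gamma\circ\beta$ into the image of $\partial$ requires genuine bookkeeping with the $E^{ijk}_\Delta$ and careful tracking of the orientation conventions of Section~\ref{sec:LabelingConventions}; by contrast the right square is a mechanical coefficient comparison once $\gamma_0$ is set up.
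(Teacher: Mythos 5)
Your proposal follows the paper's proof essentially verbatim: the paper constructs exactly this commutative diagram, with $\gamma_0(p)=-\sum_{i,j}t_jV^{ij}_\Delta\otimes e_{t_i}$ (note the minus sign, which the coefficient comparison against $\partial\circ\gamma(s,\varepsilon_{01})$ forces and which your ``$\pm$'' hedge anticipates) and with $\gamma_2$ given by an explicit case-by-case formula on edge, face and interior points, whose commutativity with $\beta$ the paper likewise only justifies as ``clear from the geometry'' with a figure for the edge-point case. So the approach is the same; the only content you leave implicit --- the explicit coefficients of $\gamma_2$ --- is essentially the only content the paper supplies beyond your sketch.
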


\begin{proof}
We wish to show that $\gamma$ fits in a commutative diagram
\begin{equation}
\cxymatrix{{C^{\fg}_1(\T)\ar[r]^-\beta\ar[d]^-{\gamma_2}&
J^{\fg}(\T)\ar[r]^-{\beta^*}\ar[d]^-\gamma&
C^{\fg}_1(\T)\ar[d]^-{\gamma_0}\\
C_2(\T^{\Delta}_{\partial M};\Z^{n-1})\ar[r]^-{\partial}&
C_1(\T^{\Delta}_{\partial M};\Z^{n-1})\ar[r]^-\partial&
C_0(\T^{\Delta}_{\partial M};\Z^{n-1}),}}
\end{equation}
where $\gamma_2$ and $\gamma_0$ are defined by
\begin{equation}
\gamma_2(p)=
\begin{cases}
\displaystyle{\sum_{(t,\Delta)\in p}\,
\sum_{i\vert t_i>0} T^i_\Delta\otimes(e_{t_i}-e_{t_i-1})}&
p=\text{edge point}\\
\displaystyle{\sum_{(t,\Delta)\in p}\sum_i T^i_\Delta\otimes(e_{t_i}-e_{t_i-1})}
&p=\text{face point}\\
\displaystyle{\sum_{(t,\Delta)\in p}\sum_i T^i_\Delta\otimes(e_{t_{i+1}}-e_{t_i-1})}
&p=\text{interior point}
\end{cases}, 
\quad \gamma_0(p)=-\sum_{i,j}t_jV^{ij}_\Delta\otimes e_{t_i}.
\end{equation}
In the formula for $\gamma_0$, $(t,\Delta)$ is any representative of $p$. 
Commutativity of the lefthand side is clear from the geometry, and is shown 
for edge points in Figure~\ref{fig:Gamma2Edge}. To prove commutativity of 
the righthand side it is by rotational symmetry enough to consider 
$(s,\varepsilon_{01})$.
We have
\begin{multline}
\label{eq:downleft}
\partial\circ\gamma(s,\varepsilon_{01})
=(V^{02}-V^{03})\otimes(e_{s_0+1}-e_{s_0})+(V^{13}-V^{12})\otimes(e_{s_1+1}-e_{s_1})+
\\(V^{13}-V^{12})\otimes(e_{s_2+1}-e_{s_2})+(V^{31}-V^{30})\otimes(e_{s_3+1}-e_{s_3}).
\end{multline}
When expanding $\gamma_0\circ\beta^*(s,\varepsilon_{01})$, one gets a sum of 
$12$ (possibly vanishing) terms of the form $C_{ij}V^{ij}\otimes w_{ij}$, 
where $C_{ij}\in\Z$, $w_{ij}\in\Z^{n-1}$, and one must check that the terms 
agree with~\eqref{eq:downleft} (for example, we must have $C_{03}=1$, 
$w_{03}=e_{s_0+1}-e_{s_0}$).
We check this for the terms involving $V^{01}$ and $V^{02}$, and leave the 
verification of the other terms to the reader. Since, 
$\beta^*(s,\varepsilon_{01})=[s+\varepsilon_{03}]+[s+\varepsilon_{12}]
-[s+\varepsilon_{02}]-[s+\varepsilon_{13}]$, the term of 
$\gamma_0\circ\beta^*(s,e)$ involving $V^{01}$ equals
\begin{equation}
s_1V^{01}\otimes e_{s_0+1}+(s_1+1)V^{01}\otimes 
e_{s_0}-s_1V^{01}\otimes e_{s_0+1}-(s_1+1)V^{01}\otimes e_{s_0}=0.
\end{equation} 
Similarly, the term involving $V^{02}$ equals
\begin{equation}
-s_2V^{02}\otimes e_{s_0+1}-(s_2+1)V^{02}e_{s_0}+(s_2+1)V^{02}\otimes 
e_{s_0+1}+s_2V^{02}\otimes e_{s_0}=V^{02}\otimes(e_{s_0+1}-e_{s_0}).
\end{equation}
This proves the result.
\end{proof}

\begin{figure}
\begin{center}
\scalebox{0.4}{\input{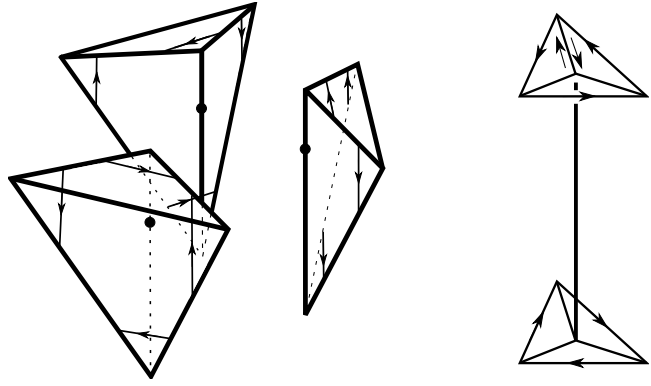}}
\caption{$\beta\circ\gamma(p)$ and $\partial\circ\gamma_2(p)$ for an 
edge point $p$.}
\label{fig:Gamma2Edge}
\end{center}
\end{figure}
Hence, we have
\begin{equation}
\gamma\colon H_3(J^{\fg})\to H_1(\partial M;\Z^{n-1}).
\end{equation}



\begin{proposition}
\label{prop:Adjoint}
The maps $\delta$ and $\gamma$ are adjoint, i.e.~we have
\begin{equation}
\Omega\big(\delta(e^{ij}\otimes e_r),(s,e)\big)
=\omega\big(e^{ij}\otimes e_r,\gamma(s,e)\big).
\end{equation}
\end{proposition}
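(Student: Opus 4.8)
The plan is to prove Proposition~\ref{prop:Adjoint} by evaluating both sides directly, after using rotational symmetry to reduce the number of cases. Since $\delta$ and $\gamma$ are maps of $\Z[A_4]$-modules (with $A_4$ acting trivially on $\Z^{n-1}$) and both $\Omega$ and the intersection pairing $\omega$ are $A_4$-invariant, the quantity $\Omega(\delta(e^{ij}\otimes e_r),(s,e))-\omega(e^{ij}\otimes e_r,\gamma(s,e))$ is invariant under the diagonal $A_4$-action on $(i,j,e,s,r)$. Because $A_4$ acts transitively on the three pairs of opposite edges of $\Delta^3_2$, and $\gamma(s,\varepsilon_{01})=\gamma(s,\varepsilon_{23})$ while $(s,\varepsilon_{01})=(s,\varepsilon_{23})$ in $J^{\fg}(\T)$, it suffices to prove the identity when $e=\varepsilon_{01}$, for every subsimplex $s$, every $r\in\{1,\dots,n-1\}$, and every ordered pair $(i,j)$ with $i\neq j$.

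For the right-hand side, I would expand $\gamma(s,\varepsilon_{01})$ from its definition and the coset description of $\Phi$,
\[
\gamma(s,\varepsilon_{01})=E^{032}_\Delta\otimes v_{s,0}+E^{123}_\Delta\otimes v_{s,1}+E^{210}_\Delta\otimes v_{s,2}+E^{301}_\Delta\otimes v_{s,3},
\]
and pair it term by term with $e^{ij}_\Delta\otimes e_r$ using the intersection numbers $\iota(e^{ij}_\Delta,E^{i'j'k'}_\Delta)$ (Figures~\ref{fig:Positive} and \ref{fig:Negative}) together with $\langle e_r,v_{s,i}\rangle=\delta_{r,s_i+1}-\delta_{r,s_i}$. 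The outcome is that the right-hand side vanishes unless $\{i,j\}$ meets both $\{0,1\}$ and $\{2,3\}$, in which case it is $\pm\langle e_r,v_{s,i}\rangle$ with the sign recorded by the four intersection numbers appearing above; for instance it is $+\langle e_r,v_{s,0}\rangle$ for $(i,j)=(0,2)$ and $-\langle e_r,v_{s,0}\rangle$ for $(i,j)=(0,3)$.

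For the left-hand side, the structural point is that $\Omega$ on $J^{\fg}(\T)$ is the orthogonal direct sum, over simplices and subsimplices, of copies of the form $\Omega$ on $J_{\Delta^3_2}$. Hence in
\[
\Omega\big(\delta(e^{ij}_\Delta\otimes e_r),(s,\varepsilon_{01})_\Delta\big)=\sum_{t_i=r}\ \sum_{s'+e'=t}t_j\,\Omega\big((s',e')_\Delta,(s,\varepsilon_{01})_\Delta\big)
\]
only the terms with $s'=s$ survive, i.e.\ those with $t=s+e'$ for $e'$ an edge of $\Delta^3_2$, and there the coefficient is $\Omega(e',\varepsilon_{01})$. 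These values are read off the defining relations of $J_{\Delta^3_2}$ (equivalently, the quiver of Figure~\ref{fig:SL2Quiver}): $\Omega(e',\varepsilon_{01})$ is $0$ for $e'\in\{\varepsilon_{01},\varepsilon_{23}\}$, $+1$ for $e'\in\{\varepsilon_{02},\varepsilon_{13}\}$, and $-1$ for $e'\in\{\varepsilon_{03},\varepsilon_{12}\}$. Substituting $t=s+e'$ for the four edges with $\Omega(e',\varepsilon_{01})\neq0$ reduces the left-hand side to a four-term expression in the coordinates of $s$, which collapses, again via $\langle e_r,v_{s,i}\rangle=\delta_{r,s_i+1}-\delta_{r,s_i}$, to exactly the value found for the right-hand side, for each of the twelve pairs $(i,j)$. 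Interior points need no special care: pairing against the single generator $(s,\varepsilon_{01})$ automatically truncates the sum over decompositions $s'+e'=t$, so the cancellation of Remark~\ref{rm:InteriorIgnore} is not invoked.

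I expect the only real obstacle to be sign bookkeeping — matching the orientation conventions of Section~\ref{sec:LabelingConventions}, which enter the right-hand side through $\iota$, with the antisymmetric form $\Omega$ on the left. There is no conceptual difficulty: the rotational reduction together with the orthogonality of $\Omega$ across subsimplices does the structural work, and what remains is a finite, mechanical verification over the twelve pairs $(i,j)$ (eight giving $\pm\langle e_r,v_{s,i}\rangle$, four giving $0$).
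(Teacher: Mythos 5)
Your proposal is correct and follows essentially the same route as the paper: reduce to $e=\varepsilon_{01}$ by rotational symmetry, use the orthogonality of $\Omega$ across subsimplices to restrict to $t=s+e'$, read the values $\Omega(e',\varepsilon_{01})$ off the quiver, and match the resulting two cases $s_i=r$, $s_i=r-1$ (which collapse to $\delta_{r,s_i+1}-\delta_{r,s_i}$) against the intersection numbers $\iota(e^{ij},E^{i'j'k'})$ applied to the four terms of $\gamma(s,\varepsilon_{01})$. The sample values you quote for $(i,j)=(0,2)$ and $(0,3)$ check out, so only the remaining mechanical verification over the twelve pairs is left, exactly as in the paper.
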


\begin{proof}
By rotational symmetry it is enough to prove this for $e=\varepsilon_{01}$. 
We have
\begin{equation}
\Omega\big(\delta(e^{ij}\otimes e_r),(s,\varepsilon_{01})\big)
=\Omega\big(\sum_{t_i=r}
\sum_{s+\varepsilon=t}t_j(s,\varepsilon),(s,\varepsilon_{01})\big).
\end{equation}
Since $\Omega\big((s^\prime,e^\prime),(s,e)\big)=0$ when $s\neq s^\prime$, 
it follows that 
\begin{equation}
\Omega\big(\delta(e^{ij}\otimes e_r),(s,\varepsilon_{01})\big)=
\begin{cases}
\Omega\big(\displaystyle{\sum_{\varepsilon_i=0}}
(s_j+\varepsilon_j)(s,\varepsilon),(s,\varepsilon_{01})\big)&s_i=r\\
\Omega\big(\displaystyle{\sum_{\varepsilon_i=1}}
(s_j+\varepsilon_j)(s,\varepsilon),(s,\varepsilon_{01})\big)&s_i=r-1\\
0&\text{otherwise.}
\end{cases}
\end{equation}
Letting $f(i,j)=\langle \varepsilon_{ij},\varepsilon_{01}\rangle$, an 
inspection of Figure~\ref{fig:SL2Quiver} shows that
\begin{equation}
\Omega\big(\delta(e^{ij}\otimes e_r),(s,\varepsilon_{01})\big)=
\begin{cases}
-f(i,j)(-1)^{r-s_i}&\text{if }s_i=r\text{ or }s_i=r-1\\
0&\text{otherwise.}
\end{cases}
\end{equation}
The fact that this equals $\omega\big(e^{ij}\otimes e_r,
\gamma(s,\varepsilon_{01})\big)$ follows from \eqref{eq:IntersectionPairing} 
using Figures~\ref{fig:Gamma01}, \ref{fig:Gamma12} and~\ref{fig:Gamma02}.
\end{proof}

It will be convenient to rewrite the formula for $\delta$.

\begin{lemma}
We have
\begin{equation}
\delta(e^{ij}\otimes e_r)=\sum_{s_i=r-1}(s,\varepsilon_{ij})
-\sum_{s_i=r}(s,\varepsilon_{kl}),
\end{equation} 
where $k$ and $l$ are such that $\{i,j,k,l\}=\{0,1,2,3\}$.
\end{lemma}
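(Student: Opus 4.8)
The plan is to deduce the formula directly from the definition $\delta(e^{ij}\otimes e_r)=\sum_{t_i=r}\sum_{s+e=t}t_j(s,e)_\Delta$ by an elementary bookkeeping computation, using the $\Z[A_4]$-equivariance of $\delta$ noted just after its definition. Since the stabilizer of $(0,1)$ in $A_4$ is trivial and $|A_4|=12$ equals the number of ordered pairs of distinct vertices, $A_4$ acts transitively on such pairs; so it suffices to treat $(i,j)=(0,1)$, for which $\{k,l\}=\{2,3\}$. The general case then follows by applying a permutation $\sigma\in A_4$ with $\sigma(0)=i$, $\sigma(1)=j$: it carries $\varepsilon_{01}\mapsto\varepsilon_{ij}$, $\varepsilon_{23}\mapsto\varepsilon_{kl}$, sends $e^{01}\mapsto e^{ij}$ (with trivial action on $\Z^{n-1}$), and transforms the summation condition $s_0=r-1$ (resp. $s_0=r$) into $s_i=r-1$ (resp. $s_i=r$).

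For $(i,j)=(0,1)$ I would reorganize the defining sum by edge type. Writing $t=s+e$ with $e$ ranging over the six edge vectors $\varepsilon_{ab}\in\dot\Delta^3_2(\Z)$ and $s\in\Delta^3_{n-2}(\Z)$ subject to $s=t-e\geq 0$, the condition $t_0=r$ becomes $s_0=r-1$ when $e\in\{\varepsilon_{01},\varepsilon_{02},\varepsilon_{03}\}$ and $s_0=r$ when $e\in\{\varepsilon_{12},\varepsilon_{13},\varepsilon_{23}\}$, while the weight $t_1=s_1+e_1$ equals $s_1+1$ for $e\in\{\varepsilon_{01},\varepsilon_{12},\varepsilon_{13}\}$ and $s_1$ for $e\in\{\varepsilon_{02},\varepsilon_{03},\varepsilon_{23}\}$. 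No restriction on the range of $t$ is needed, since vertex points admit no decomposition $t=s+e$ and contribute nothing automatically. This writes $\delta(e^{01}\otimes e_r)$ as an explicit integral combination of the generators $(s,\varepsilon_{ab})_\Delta$, split into the two blocks $s_0=r-1$ and $s_0=r$.

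Next I would pass, in each summand $J_{\Delta^3_2}$, to the basis $(s,\varepsilon_{01})_\Delta,(s,\varepsilon_{12})_\Delta$ using the relations \eqref{eq:first} and \eqref{eq:second}, i.e. $(s,\varepsilon_{23})=(s,\varepsilon_{01})$, $(s,\varepsilon_{03})=(s,\varepsilon_{12})$, and $(s,\varepsilon_{13})=(s,\varepsilon_{02})=-(s,\varepsilon_{01})-(s,\varepsilon_{12})$. Collecting coefficients: in the block $s_0=r-1$ the coefficient of $(s,\varepsilon_{01})$ is $(s_1+1)-s_1=1$ and that of $(s,\varepsilon_{12})$ is $-s_1+s_1=0$; in the block $s_0=r$ the coefficient of $(s,\varepsilon_{01})$ is $-(s_1+1)+s_1=-1$ and that of $(s,\varepsilon_{12})$ is $(s_1+1)-(s_1+1)=0$. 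Hence $\delta(e^{01}\otimes e_r)=\sum_{s_0=r-1}(s,\varepsilon_{01})-\sum_{s_0=r}(s,\varepsilon_{01})=\sum_{s_0=r-1}(s,\varepsilon_{01})-\sum_{s_0=r}(s,\varepsilon_{23})$, which is the asserted identity for $(i,j,k,l)=(0,1,2,3)$.

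There is no genuine obstacle here: the identity holds on the nose in $J^{\fg}(\T)$, so the Remark permitting interior points to be discarded is not even invoked. The only care points are keeping the six reindexed pieces straight (which subsimplices occur, and with which weight) and the degenerate ranges — only $1\le r\le n-1$ occurs since $e_0=e_n=0$, and for $r$ near the extremes one of the index sets $\{s\mid s_0=r-1,\ \textstyle\sum s_i=n-2\}$ or $\{s\mid s_0=r\}$ is empty, which the computation accommodates automatically as an empty sum.
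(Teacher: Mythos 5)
Your proposal is correct and follows essentially the same route as the paper: reduce by the $A_4$-equivariance of $\delta$ to a single ordered pair (the paper uses $(i,j)=(1,0)$, you use $(0,1)$), expand the defining sum over the six edge types, and collapse using the relations \eqref{eq:first} and \eqref{eq:second} in $J_{\Delta^3_2}$; your coefficient bookkeeping checks out. Your extra care about transitivity of the $A_4$-action on ordered pairs, the vanishing contribution of vertex points, and the empty-sum boundary cases is sound but not a different argument.
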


\begin{proof}
By rotational symmetry, we may assume that $i=1$ and $j=0$. Using the 
relations $(s,\varepsilon_{01})+(s,\varepsilon_{12})+(s,\varepsilon_{02})=0$ 
and $(s,e)+(s,\bar e)=0$, we have
\begin{equation}
\begin{aligned}
\delta(e^{10}\otimes e_r)=&\sum_{t_1=r}\sum_{s+e=t}t_0(s,e)\\
=&\sum_{s_1=r-1}(s_0+1)(s,\varepsilon_{01})+\sum_{s_1=r}s_0(s,\varepsilon_{23})+
\\ &
\sum_{s_1=r-1}s_0(s,\varepsilon_{12})+\sum_{s_1=r}(s_0+1)(s,\varepsilon_{03})+
\\ &
\sum_{s_1=r-1}s_0(s,\varepsilon_{13})+\sum_{s_1=r}(s_0+1)(s,\varepsilon_{02})
\\=&
\sum_{s_i=r-1}(s,\varepsilon_{01})-\sum_{s_i=r}(s,\varepsilon_{23}).
\end{aligned}
\end{equation}
This proves the result.
\end{proof}

\begin{lemma}
\label{lemma:NearFar} 
Let $D=\diag\left(\{n-i\}_{i=1}^{n-1}\right)$ and $A_\fg$ denote the Cartan 
matrix of $\fg$.
\begin{equation}
\gamma\circ\delta(e^{ij}\otimes e_r)=E^{ikl}\otimes(\frac{1}{2}DA_\fg De_r)+
\\\big(E^{jlk}+E^{kij}+E^{lji}\big)\otimes e_{n-r}.
\end{equation}
where $k$ and $l$ are such that the permutation taking $ijkl$ to $0123$ is 
positive.
\end{lemma}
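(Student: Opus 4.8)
The plan is to reduce to a single choice of $(i,j,k,l)$ by equivariance and then compute directly. Both $\delta$ and $\gamma$ are maps of $\Z[A_4]$-modules, with $A_4$ acting trivially on $\Z^{n-1}$: for $\delta$ this was noted above, and for $\gamma$ it is straightforward from the definitions, using the $A_4$-equivariance of the correspondence $\Phi$ between $D_4$-cosets and pairs of opposite edges. Since the stabilizer in $A_4$ of an ordered pair $(i,j)$ with $i\neq j$ is trivial, $A_4$ acts simply transitively on the twelve edges $e^{ij}$ of a simplex; moreover $A_4$ consists of even permutations, so acting by $A_4$ is compatible with the rule determining $k,l$ from $i,j$. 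Hence it suffices to verify the identity for one pair, and I take $(i,j,k,l)=(1,0,3,2)$.

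By the preceding lemma, $\delta(e^{10}\otimes e_r)=\sum_{s_1=r-1}(s,\varepsilon_{01})-\sum_{s_1=r}(s,\varepsilon_{23})$, and since opposite edges coincide in $J_{\Delta^3_2}$ this equals $\sum_{s_1=r-1}(s,\varepsilon_{01})-\sum_{s_1=r}(s,\varepsilon_{01})$. Applying $\gamma$ with $\gamma(s,\varepsilon_{01})=E^{123}\otimes v_{s,1}+E^{032}\otimes v_{s,0}+E^{301}\otimes v_{s,3}+E^{210}\otimes v_{s,2}$ and grouping by the four boundary edges, the coefficient of the edge whose near vertex is $a$ becomes $\big(\sum_{s_1=r-1}-\sum_{s_1=r}\big)v_{s,a}$, where $v_{s,a}=e_{s_a+1}-e_{s_a}$ and $e_0=e_n=0$.

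For the near index $a=1$ the vector $v_{s,1}$ depends only on $s_1$, so, writing $N(m)=\#\{s\in\Delta^3_{n-2}(\Z):s_1=m\}=\binom{n-m}{2}$, the coefficient of $E^{123}$ is $N(r-1)(e_r-e_{r-1})-N(r)(e_{r+1}-e_r)$; expanding the binomials and using $De_r=(n-r)e_r$ together with $A_\fg e_r=2e_r-e_{r-1}-e_{r+1}$ shows this is exactly $\tfrac12 DA_\fg De_r$. For each far index $a\in\{0,2,3\}$ the summation constrains $s_1$ but not $s_a$, and the telescoping identity $\sum_{s_1=m}v_{s,a}=\sum_{p=0}^{n-2-m}(n-1-m-p)(e_{p+1}-e_p)=e_1+\dots+e_{n-1-m}$ yields coefficient $(e_1+\dots+e_{n-r})-(e_1+\dots+e_{n-1-r})=e_{n-r}$ for each of $E^{032},E^{301},E^{210}$. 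Identifying $E^{123}=E^{ikl}$, $E^{032}=E^{jlk}$, $E^{301}=E^{kij}$, $E^{210}=E^{lji}$ by the conventions of Section~\ref{sec:LabelingConventions} (under which $E^{abc}=E^{acb}$) gives the claimed formula, and $A_4$-equivariance extends it to all $(i,j)$. The only non-formal step is this combinatorial one: computing the multiplicities $\binom{n-m}{2}$, summing the telescoping series over the far coordinate, and recognizing the result as $\tfrac12 DA_\fg De_r$; keeping the orientations of the $E^{ijk}$ straight is the other place demanding care.
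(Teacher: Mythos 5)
Your reduction to $(i,j)=(1,0)$ by $A_4$-equivariance and the subsequent computation are correct and are essentially the paper's own argument: the same count $\binom{n-m}{2}$ of subsimplices with $s_1=m$ for the near edge, yielding $\tfrac12 DA_\fg De_r$, and the same telescoping over the far coordinate yielding $e_{n-r}$ on each of the three far edges. Up to that point the two proofs coincide.

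The problem is the final identification. Your computation produces, unambiguously, $E^{123}\otimes\tfrac12 DA_\fg De_r+\big(E^{032}+E^{210}+E^{301}\big)\otimes e_{n-r}$, i.e.\ the formula of the lemma with $(k,l)=(2,3)$. To force agreement with $(k,l)=(3,2)$ (the choice dictated by the lemma's ``positive permutation'' clause, since $1032\mapsto 0123$ is $(01)(23)$) you invoke $E^{abc}=E^{acb}$. That identity is false: the $E^{ijk}$ are \emph{oriented} edges of $\T^\Delta_{\partial M}$, and the intersection-pairing table gives $\iota(e^{ij},E^{ijk})=-1$ while $\iota(e^{ij},E^{ikj})=+1$, so $E^{abc}=-E^{acb}$. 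With the correct relation your reconciliation flips the sign of every term, and your stated conclusion is the negative of what you computed. The underlying issue is an internal inconsistency in the paper: the lemma says ``positive'' but its proof sets $k=2$, $l=3$ for $(i,j)=(1,0)$, which is the odd choice, and the computation supports the proof's choice. The correct fix is to take $(k,l)=(2,3)$ (equivalently, to read the parity convention as the opposite of what is literally stated), not to swap indices in the $E$'s via an orientation-reversing ``identity.'' As written, your last step is a genuine sign error rather than a valid reconciliation.
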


\begin{proof}
May assume that $i=1$ and $j=0$. Then $k=2$ and $l=3$. 
One thus has
\begin{equation}
\begin{aligned}
\label{eq:GammaDelta}
\gamma\circ\delta(e^{10}\otimes e_r)=&
\sum_{s_1=r-1}\gamma(s,\varepsilon_{01})-\sum_{s_0=r}\gamma(s,\varepsilon_{23})
\\ =&
\sum_{s_1=r-1}E^{123}\otimes(e_r-e_{r-1})-\sum_{s_1=r}E^{123}\otimes(e_{r+1}-e_r)+
\\ &
\sum_{s_1=r-1}E^{032}\otimes(e_{s_0+1}-e_{s_0})-\sum_{s_1=r}E^{032}\otimes
(e_{s_0+1}-e_{s_0})+\\ &
\sum_{s_1=r-1}E^{210}\otimes(e_{s_0+1}-e_{s_0})-\sum_{s_1=r}E^{210}\otimes
(e_{s_0+1}-e_{s_0})+
\\ &
\sum_{s_1=r-1}E^{301}\otimes(e_{s_0+1}-e_{s_0})-\sum_{s_1=r}E^{301}\otimes
(e_{s_0+1}-e_{s_0}).
\end{aligned}
\end{equation}
The number of subsimplices with $s_1=c$ equals $\frac{1}{2}(n-c)(n-c-1)$. We 
thus have
\begin{multline}
\label{eq:Near}
\sum_{s_1=r-1}(e_r-e_{r-1})-\sum_{s_1=r}(e_{r+1}-e_r)=\\
-\frac{1}{2}(n-r+1)(n-r)e_{r-1}+(n-r)^2e_r-\frac{1}{2}(n-r)(n-r-1)e_{r+1}
=\frac{1}{2}DA_\fg De_r.
\end{multline}
By telescoping, we have ($i$ and $j$ now general, not 1 and 0))
\begin{multline}
\label{eq:Far}
\sum_{s_1=r-1}E^{ijk}\otimes(e_{s_0+1}-e_{s_0})
-\sum_{s_1=r}E^{ijk}\otimes(e_{s_0+1}-e_{s_0})=\\
E^{ijk}\otimes\sum_{s_0=0}^{n-1-r}(e_{s_0+1}-e_{s_0})=E^{ijk}\otimes e_{n-r}.
\end{multline}
Plugging~\eqref{eq:Near} and \eqref{eq:Far} into \eqref{eq:GammaDelta} we 
end up with
\begin{equation}
\gamma\circ\delta(e^{10}\otimes e_r)=E^{123}\otimes\frac{1}{2}DA_\fg De_r
+E^{032}\otimes e_{n-r}+E^{210}\otimes e_{n-r}+E^{301}\otimes e_{n-r},
\end{equation}  
which proves the result.
\end{proof}

\begin{proposition}
\label{prop:NearFar}
The composition $\gamma\circ\delta\colon H_1(\partial M;\Z^{n-1})\to 
H_1(\partial M;\Z^{n-1})$ is given by
\begin{equation}
\gamma\circ\delta = \id\otimes DA_\fg D.
\end{equation} 
\end{proposition}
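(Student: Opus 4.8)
The plan is to promote the chain-level identity of Lemma~\ref{lemma:NearFar} to a statement on homology, using that $H_1$ of each of the two cellular decompositions of $\partial M$ is canonically $H_1(\partial M;\Z^{n-1})$. Since $H_1(\partial M;\Z^{n-1})$ is generated by edge cycles and everything is linear in the $\Z^{n-1}$-coefficient, it suffices to evaluate $\gamma\circ\delta$ on classes $z\otimes e_r$ with $z$ a $1$-cycle supported on the medial edges $e^{ij}_\Delta$ of $\T^{\pentagon}_{\partial M}$.

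First I would fix a chain homotopy equivalence $\Theta\colon C_*(\T^{\pentagon}_{\partial M};\Z^{n-1})\to C_*(\T^{\Delta}_{\partial M};\Z^{n-1})$ realizing the canonical identification, so that $\Theta_*=\id$ on $H_1$. Since $\T^{\pentagon}_{\partial M}$ is obtained from $\T^{\Delta}_{\partial M}$ by truncating each vertex $V^{ij}_\Delta$, such a $\Theta$ can be made explicit on $1$-chains: it sends $e^{ij}_\Delta$ to the edge $E^{ikl}_\Delta$ of the cut-off triangle $T^i_\Delta$ of which $e^{ij}_\Delta$ is the midline, plus a correction supported near the far vertices $j,k,l$ — the correction is forced because the naive assignment $e^{ij}_\Delta\mapsto E^{ikl}_\Delta$ is not a chain map, the face pairings of $\T$ identifying the vertices $v^{il}_\Delta$ without remembering the index $l$. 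The content of Lemma~\ref{lemma:NearFar} is that exactly these far-vertex edges $E^{jlk},E^{kij},E^{lji}$ show up in $\gamma\circ\delta$.

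I would then compare $\gamma\circ\delta$ with $\Theta\circ(\id\otimes DA_{\fg}D)$, regarding $\id\otimes DA_{\fg}D$ as a chain endomorphism of $C_*(\T^{\pentagon}_{\partial M};\Z^{n-1})$, and show the two maps are chain homotopic; this yields $\gamma\circ\delta=\Theta_*\circ(\id\otimes DA_{\fg}D)=\id\otimes DA_{\fg}D$ on $H_1$. An equivalent and perhaps cleaner packaging uses Proposition~\ref{prop:Adjoint}: since $\omega$ is nondegenerate on $H_1(\partial M;\Z^{n-1})$, it is enough to establish the bilinear identity $\Omega\big(\delta(\alpha\otimes v),\delta(\beta\otimes w)\big)=\iota(\alpha,\beta)\langle v,DA_{\fg}D\,w\rangle$, and by adjointness the left-hand side equals $\omega\big(\alpha\otimes v,\gamma\circ\delta(\beta\otimes w)\big)$, which is evaluated by substituting the chain formula $\gamma\circ\delta(e^{ij}\otimes e_r)=E^{ikl}\otimes\tfrac12 DA_{\fg}De_r+(E^{jlk}+E^{kij}+E^{lji})\otimes e_{n-r}$ into the intersection pairing.

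The main obstacle on either route is bookkeeping the far-vertex terms. In the displayed formula the near term already carries the correct coefficient $\tfrac12 DA_{\fg}De_r$, but only half of it, while the far term carries the plain coefficient $e_{n-r}$, and neither summand is a cycle on its own. Only after summing over a closed $1$-cycle and reducing modulo the boundaries $\partial T^i_\Delta$ of the cut-off triangles do the far contributions reorganize — by a telescoping along the cycle, analogous to the per-simplex telescoping used in the proof of Lemma~\ref{lemma:NearFar} to produce the coefficient $e_{n-r}$ — into a second copy of $\tfrac12 DA_{\fg}De_r$, localized where the near term sits, so that the two halves add to $DA_{\fg}De_r$. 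Performing this reorganization while respecting the orientation conventions of Section~\ref{sec:LabelingConventions} is the delicate part; for $n=2$ it degenerates to Neumann's fact that $\gamma\circ\delta$ is multiplication by $2$, and the general statement is its $\Z^{n-1}$-weighted refinement.
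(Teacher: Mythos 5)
Your setup (reduce to an edge cycle $\alpha=\sum a_m e^{i_mj_m}_{\Delta_m}$, apply the chain-level formula of Lemma~\ref{lemma:NearFar}, then identify the resulting class in $H_1(\partial M;\Z^{n-1})$ under the canonical identification of the two cellular decompositions) is exactly the paper's strategy. But the mechanism you propose for the final identification is wrong, and it is precisely the step that carries all the content. You claim that the near terms contribute $\alpha\otimes\tfrac12 DA_{\fg}De_r$ and that the far terms, after telescoping along the cycle, reorganize into ``a second copy of $\tfrac12 DA_{\fg}De_r$.'' This cannot happen: $H_1(\partial M;\Z^{n-1})\cong H_1(\partial M)\otimes\Z^{n-1}$ splits coordinatewise, and adding boundaries never mixes the $\Z^{n-1}$-coefficients. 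The far contribution $\sum a_m\bigl(E^{j_ml_mk_m}+E^{k_mi_mj_m}+E^{l_mj_mi_m}\bigr)\otimes e_{n-r}$ lies entirely in the $e_{n-r}$-coordinate, whereas $\tfrac12 DA_{\fg}De_r$ is supported on the coordinates $r-1,r,r+1$; for $n\geq 3$ and generic $r$ these are complementary, so the far part can only contribute the final answer by being \emph{null-homologous}, not by supplying half of it.

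The correct bookkeeping, which the paper imports from Neumann~\cite[Lemma~4.3]{NeumannComb} (see also \cite[Figures 12,13]{BFG}), is: the near contribution $\sum a_m E^{i_mk_ml_m}$ is homologous to $2\alpha$, and the far contribution is null-homologous. The factor of $2$ cancelling the $\tfrac12$ therefore comes entirely from the near part, not from a near/far split. Beyond misattributing this factor, your proposal never actually establishes either homological fact — you defer it as ``the delicate part'' — yet it is the entire content of the proposition once Lemma~\ref{lemma:NearFar} is in hand. To repair the argument you must either prove these two statements directly (e.g.\ by exhibiting the explicit $2$-chains in $C_2(\T^{\Delta}_{\partial M})$ whose boundaries realize the homologies) or cite Neumann's lemma as the paper does; the adjointness route via Proposition~\ref{prop:Adjoint} does not avoid this, since evaluating $\omega\bigl(\alpha\otimes v,\gamma\circ\delta(\beta\otimes w)\bigr)$ on cycles requires the same identification of the near and far classes.
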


\begin{proof}
Let $\alpha=\sum a_m e^{i_mj_m}_{\Delta_m}$ be a cycle in 
$C_1(\T^{\pentagon}_{\partial M})$. In the proof of \cite[Lemma~4.3]{NeumannComb} 
(see also Bergeron--Falbel--Guilloux~\cite[Figures 12,13]{BFG}), 
Neumann proves that the ``near contribution'' 
\begin{equation}
\sum a_m E^{i_mk_ml_m}
\end{equation} 
is homologous to $2\alpha$, whereas the ``far contribution'' 
\begin{equation}
\sum a_m\big(E^{j_ml_mk_m}+E^{k_mi_mj_m}+E^{l_mj_mi_m}\big)
\end{equation}
is null-homologous. The result now follows from Lemma~\ref{lemma:NearFar}.
\end{proof}

\begin{proposition}
\label{prop:SameRank}
The groups $H_3(\J^{\fg})$ and $H_1(\partial M;\Z^{n-1})$ have equal rank.
\end{proposition}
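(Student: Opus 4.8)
The plan is to pin down $\rank H_3(\J^{\fg})$ from the Euler characteristic of $\J^{\fg}$ together with a lattice‑point count, and then to observe that the answer is exactly $\rank H_1(\partial M;\Z^{n-1})$. As a byproduct the inequality $\rank H_3(\J^{\fg})\ge\rank H_1(\partial M;\Z^{n-1})$ also drops out of Proposition~\ref{prop:NearFar}, but the count will give equality outright.

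First, note that $H_1(\J^{\fg})$, $H_2(\J^{\fg})$, $H_4(\J^{\fg})$ and $H_5(\J^{\fg})$ are all finite, by Proposition~\ref{prop:H1}, the computation of $H_2(\J^{\fg})$ in Section~\ref{sec:OuterHomology}, and the universal coefficient argument of Section~\ref{sec:UniversalCoeff}. This is precisely the step where the fact that $H_2(\J^{\fg})$ is torsion is essential. Consequently the Euler characteristic of $\J^{\fg}$ is concentrated in degree $3$, so
\begin{equation}
\rank H_3(\J^{\fg})=\rank J^{\fg}(\T)-2\rank C^{\fg}_1(\T)+2\rank C^{\fg}_0(\T).
\end{equation}

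Next I would evaluate the three ranks. Let $t$ be the number of simplices of $\T$ and $c$ the number of its $0$-cells (one for each boundary component of $M$). Then $\rank C^{\fg}_0(\T)=c(n-1)$, and $\rank J^{\fg}(\T)=2t\binom{n+1}{3}$, since $J^{\fg}(\T)$ is a direct sum of copies of the rank‑$2$ group $J_{\Delta^3_2}$, one for each of the $\binom{n+1}{3}$ subsimplices of each of the $t$ simplices. For $\rank C^{\fg}_1(\T)$, which is the number of non‑vertex integral points, I split the points by type and count lattice points in a standard simplex: the interior points contribute $t\binom{n-1}{3}$; the face points contribute $2t\binom{n-1}{2}$ (each $2$-cell of $\T$ carries $\binom{n-1}{2}$ of them, and $\T$ has $2t$ faces); and the edge points contribute $(t+c-h)(n-1)$, using that each of the $t+c-h$ edges of $\T$ carries $n-1$ of them, the edge count $t+c-h$ being the elementary Euler‑characteristic count recorded in the remark after Corollary~\ref{cor:Rank}. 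Substituting these into the displayed formula, the coefficients of $c$ cancel and the coefficient of $t$ vanishes by the identity $\binom{n-1}{3}+2\binom{n-1}{2}+(n-1)=\binom{n+1}{3}$, leaving $\rank H_3(\J^{\fg})=2h(n-1)$. Since $\rank H_1(\partial M)=2h$ and $\Z^{n-1}$ is free, $\rank H_1(\partial M;\Z^{n-1})=2h(n-1)$, and the proof is complete. (Alternatively, the inequality $\ge$ follows at once because $\gamma\circ\delta=\id\otimes DA_{\fg}D$ by Proposition~\ref{prop:NearFar} is invertible over $\Q$, so $\delta$ is injective after tensoring with $\Q$.)

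The conceptual work — that the four outer homology groups are finite — is already done, so the remaining content is purely combinatorial. The main thing to be careful about is the lattice‑point bookkeeping: correctly separating edge, face and interior points, counting each $2$-cell of $\T$ once even though it is assembled from two triangular faces, and using that $\T$ has exactly $c$ zero‑cells and $t+c-h$ edges. Verifying the $n=2$ case ($\rank C^{\fg}_1(\T)=t+c-h$ and $\rank J^{\fg}(\T)=2t$, giving $\rank H_3(\J^{\fg})=2h$) against Neumann's result is a reassuring sanity check.
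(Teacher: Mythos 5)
Your proof is correct and follows essentially the same route as the paper's: both observe that the outer homology groups are finite, so $\rank H_3(\J^{\fg})$ equals the Euler characteristic of the complex, and then compute the ranks of the chain groups (the paper keeps $v,e,f,t$ symbolic and reduces to $2(n-1)\chi(\widehat M)$, while you count integral points by type and substitute $e=t+c-h$ directly; the bookkeeping is equivalent). The arithmetic checks out, including the identity $\binom{n+1}{3}=\binom{n-1}{3}+2\binom{n-1}{2}+(n-1)$.
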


\begin{proof}
Since all the outer homology groups have rank $0$, the rank of $H_3(\J)$ is 
the Euler characteristic $\chi( \J)$ of $\J$. Let $v$, $e$, $f$, and $t$ 
denote the number of vertices, edges, faces and tetrahedra, respectively, 
of $\T$. By a simple counting argument we have
\begin{equation}
\begin{gathered}
\rank (C_0^{\fg}(\T))=(n-1)v,\quad \rank(\J^{\fg}(\T))=2\binom{n+1}{3},\\
\rank(C_1^{\fg}(\T))=(n-1)e+\frac{(n-1)(n-2)}{2}f+\frac{(n-1)(n-2)(n-3)}{6}t.\\
\end{gathered}
\end{equation} 
Using the fact that $f=2t$ we obtain
\begin{multline}
\chi(\J)=2\rank(C_0^{\fg}(\T))-2\rank(C_1^{\fg}(\T))+\rank(J^{\fg}(\T))=\\
2(n-1)(v-e+t)=2(n-1)(v-e+f-t)=2(n-1)\chi(\widehat{M}).
\end{multline}
The result now follows from the elementary fact (proved by an Euler 
characteristic count) that $\chi(\widehat{M})=1/2\rank(H_1(\partial M))$. 
\end{proof}

\begin{corollary}
The groups $H_3(\J^{\fg})$ and $H_1(\partial M;\Z^{n-1})$ are isomorphic modulo 
torsion.\qed
\end{corollary}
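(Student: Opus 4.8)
The plan is to read this off directly from the rank computation already in hand. First I would observe that $H_3(\J^{\fg})$ and $H_1(\partial M;\Z^{n-1})$ are finitely generated abelian groups, and that for such groups ``isomorphic modulo torsion'' is equivalent to having the same rank (both free quotients are then isomorphic to $\Z^r$ for a common $r$). Since Proposition~\ref{prop:SameRank} supplies precisely this equality of ranks, the statement follows immediately, and in principle no further argument is required.

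For use in the remainder of Section~\ref{sec:MiddleHomology} it is nonetheless desirable to exhibit the isomorphism explicitly through $\delta$. Here the plan is the following. By Proposition~\ref{prop:NearFar} the composition $\gamma\circ\delta$ equals $\id\otimes DA_\fg D$ on $H_1(\partial M;\Z^{n-1})$. Since $\det D=(n-1)!\neq 0$ and the Cartan matrix $A_\fg$ of $\fg$ satisfies $\det A_\fg=n\neq 0$, the endomorphism $DA_\fg D$ is invertible over $\Q$, so $\gamma\circ\delta$ becomes an isomorphism after tensoring with $\Q$. Consequently $\delta\otimes\Q$ is injective; combining this with the equality of $\Q$-dimensions provided by Proposition~\ref{prop:SameRank} shows that $\delta\otimes\Q$ is in fact an isomorphism, so $\delta$ induces an isomorphism between the free parts of the two groups.

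I expect no genuine obstacle at this stage: all the real content has already been assembled, namely in Proposition~\ref{prop:SameRank} (the Euler-characteristic count, using $f=2t$ and $\chi(\widehat M)=\tfrac12\rank H_1(\partial M)$) and in Proposition~\ref{prop:NearFar} (the near/far contribution computation adapted from Neumann). The only subtlety worth flagging is that the isomorphism is canonical only up to torsion and that the clean integral statement requires inverting $n$; that refined version, together with the identification of $\Omega$ with the form $\omega_{A_\fg}$ coming from the adjointness of $\delta$ and $\gamma$ (Proposition~\ref{prop:Adjoint}), is exactly what Theorem~\ref{thm:1} records.
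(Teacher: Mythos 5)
Your first paragraph is exactly the paper's (implicit) proof: the corollary is stated with a \qed immediately after Proposition~\ref{prop:SameRank}, the intended argument being precisely that two finitely generated abelian groups of equal rank are isomorphic modulo torsion. The additional material on realizing the isomorphism via $\delta$ (using Proposition~\ref{prop:NearFar} and invertibility of $DA_\fg D$ over $\Q$) is correct and matches what the paper does later in the proof of Theorem~\ref{thm:1}, but it is not needed for this corollary.
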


\subsection{Proof of Theorem~\ref{thm:1}}

We now conclude the proof of Theorem~\ref{thm:1}. All that remains are the 
statements about the free part of $H_3(\J^{\fg})$.
We first show that $\gamma$ and $\delta$ admit factorizations
\begin{equation}
\begin{gathered}
\xymatrix{\delta\colon H_1(\partial M;\Z^{n-1})\ar[r]^-{\id\otimes D}&
H_1(\partial M;\Z^{n-1})\ar[r]^-{\delta^{\prime}}&H_3(\J^{\fg})}\\
\xymatrix{\gamma\colon H_3(\J^{\fg})\ar[r]^-{\gamma^{\prime}}&
H_1(\partial M;\Z^{n-1})\ar[r]^-{\id\otimes D}&H_1(\partial M;\Z^{n-1}).}
\end{gathered}
\end{equation}
The factorization of $\delta$ is constructed in the next section (see 
Proposition~\ref{prop:MuAndDelta}), and the factorization of $\gamma$ thus 
follows from Proposition~\ref{prop:Adjoint}. By 
Proposition~\ref{prop:NearFar}, we thus have 
\begin{equation}
\label{eq:GammaPrimeDeltaPrime}
\gamma^{\prime}\circ\delta^{\prime}=\id\otimes A.
\end{equation}
Since $\det(\A_{\fg})=n$, it follows that $\gamma^{\prime}$ maps onto a 
subgroup of $H_1(\partial M;\Z^{n-1})$ of index $h^n$. 
This shows that $\delta^{\prime}$ induces an isomorphism 
$H_1(\partial M;\Z[1/n]^{n-1})\to H_3(\J^{\fg})\otimes\Z[1/n]$ with inverse 
$\gamma^{\prime}$. The fact that $\Omega$ corresponds to $\omega_{A_\fg}$ 
follows from
\begin{equation}
\label{eq:OmegaAComputation}
\omega_{A_\fg}(\alpha\otimes v,\beta\otimes w)
=\omega(\alpha\otimes v,\beta\otimes Aw)
=\omega(\alpha\otimes v,\gamma^{\prime}\circ\delta^{\prime}(\beta\otimes w))
=\Omega\big(\delta^{\prime}(\alpha\otimes v),\delta^{\prime}(\beta\otimes w)\big).
\end{equation}






\section{Cusp equations and rank}
\label{sec:CuspEquations}

We express the cusp equations in terms of yet another decomposition of 
$\partial M$. This decomposition was introduced in 
Garoufalidis--Goerner--Zickert~\cite{GaroufalidisGoernerZickert}, and is 
the induced decomposition on $\partial M$ induced by the decomposition of 
$M$ obtained by truncating both vertices and edges. We call it the 
\emph{doubly truncated decomposition}. As in 
\cite{GaroufalidisGoernerZickert}, we label the edges by $\gamma^{ijk}$ and 
$\beta^{ijk}$. The superscript $ijk$ of an edge indicates the initial vertex 
(denoted by $v^{ijk}$) of the edge, $i$ being the nearest vertex of 
$\Delta$, $ij$, the nearest edge and $ijk$ the nearest face. As in 
Section~\ref{sec:LabelingConventions}, we label the hexagonal faces by 
$\tau^i$ and the polygonal faces by $p^{\{i_l,j_l\}}$.
\begin{figure}[htb]
\begin{center}
\begin{minipage}[b]{0.47\textwidth}
\begin{center}
\scalebox{0.65}{\input{HexagonDecomposition.tex}}
\end{center}
\end{minipage}
\hfill
\begin{minipage}[b]{0.47\textwidth}
\begin{center}
\scalebox{0.85}{\input{DoublyTruncatedCocycle.tex}}
\end{center}
\end{minipage}\\
\begin{minipage}[t]{0.47\textwidth}
\begin{center}
\caption{Doubly truncated decomposition of $\partial M$.}
\end{center}
\end{minipage}
\hfill
\begin{minipage}[t]{0.47\textwidth}
\begin{center}
\caption{Labeling conventions.}
\end{center}
\end{minipage}
\end{center}
\end{figure}

\subsection{Cusp equations}

For a shape assignment $z$ consider the map
\begin{equation}
\begin{gathered}
C(z)\colon C_1(\T^{\hexagon}_{\partial M};\Z^{n-1})\to \C^*,\\
\gamma^{ijk}\otimes e_r\mapsto 
(z^{\varepsilon_{ij}}_{(r-1)v_i+(n-r-1)v_j})^{-\varepsilon^{ijk}_\circlearrowleft},
\qquad \beta^{ijk}\mapsto\prod_{\substack{t\in\text{face}(ijk)\\t_i=r}}\!\!
(X_t)^{\varepsilon^{ijk}_\circlearrowleft},
\end{gathered}
\end{equation}
where $\varepsilon^{ijk}_\circlearrowleft$ is the sign of the permutation 
taking $ijkl$ to $0123$.
It follows from~\cite[Section~13]{GaroufalidisGoernerZickert} that $C(z)$ 
is a cocycle (it is the ratio of consecutive diagonal entries in the 
\emph{natural cocycle}~\cite{GaroufalidisGoernerZickert} associated to $z$). 
Hence, $C(z)$ may be regarded as a cohomology class 
$C(z)\in H^1(\partial M;(\C^*)^{n-1})$. This class vanishes if an only if 
for each representative $\alpha\in C_1(\T^{\hexagon}_{\partial M})$ of each 
generator of $H_1(\partial M)$, we have
\begin{equation}
\label{eq:CuspEqLevelr}
C(z)(\alpha\otimes e_r)=1.
\end{equation} 
This discussion is summarized in the result below.
\begin{theorem}
[Garoufalidis--Goerner--Zickert~\cite{GaroufalidisGoernerZickert}] 
The $\PGL(n,\C)$-representation determined by a shape assignment $z$ is 
boundary-unipotent if and only if all cusp 
equations are satisfied. Equivalently, if and only if $C(z)$ is trivial in 
$H^1(\partial M;(\C^*)^{n-1})$.\qed
\end{theorem}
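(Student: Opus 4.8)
The plan is to recollect enough of the \emph{natural cocycle} of Garoufalidis--Goerner--Zickert~\cite{GaroufalidisGoernerZickert} to read off boundary-unipotency from its diagonal part, and then to reduce triviality of that diagonal part to the cusp equations.

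\textbf{Step 1 (from $z$ to a boundary cocycle).} By the construction of~\cite{GaroufalidisGoernerZickert} a shape assignment $z$ on $\T$ determines a $\PGL(n,\C)$-valued $1$-cocycle on the doubly truncated triangulation of $M$, well defined up to coboundary, whose holonomy is the representation $\rho_z\colon\pi_1(M)\to\PGL(n,\C)$. Restricting to the doubly truncated decomposition $\T^{\hexagon}_{\partial M}$ of $\partial M$ gives a cocycle representing $\rho_z|_{\pi_1(\partial M)}$, the edges $\gamma^{ijk}$ and $\beta^{ijk}$ carrying exactly the matrices entering the definition of $C(z)$.

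\textbf{Step 2 (the diagonal part is $C(z)$).} The key input --- this is the content of~\cite[Section~13]{GaroufalidisGoernerZickert} --- is that after a fixed conjugation the natural cocycle is \emph{upper triangular} along $\T^{\hexagon}_{\partial M}$, i.e.\ takes values in a Borel subgroup $B\subset\PGL(n,\C)$. Post-composing with $B\to B/[B,B]\cong T$, where $T\cong(\C^*)^{n-1}$ is the diagonal torus, annihilates the unipotent part; since $B$ acts trivially on $B/[B,B]$ this yields an \emph{abelian} cocycle on $\partial M$ with trivial coefficients, hence a class in $H^1(\partial M;(\C^*)^{n-1})$. Identifying $T\cong(\C^*)^{n-1}$ via the $n-1$ characters ``ratio of the $r$-th and $(r+1)$-st diagonal entries'' and reading off the formulas, this class is precisely $C(z)$: the $\gamma$-edges contribute the shape-parameter factors $(z^{\varepsilon_{ij}}_{(r-1)v_i+(n-r-1)v_j})^{-\varepsilon^{ijk}_\circlearrowleft}$ and the $\beta$-edges the $X$-coordinate factors $\prod_t(X_t)^{\varepsilon^{ijk}_\circlearrowleft}$.

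\textbf{Step 3 (boundary-unipotent $\Leftrightarrow$ $C(z)$ trivial).} By Step 2, $\rho_z|_{\pi_1(\partial M)}$ is conjugate into $B$; a subgroup of $B$ consists of unipotents if and only if it lies in the unipotent radical $N$, i.e.\ if and only if its image under $B\to T$ is trivial. On the level of cocycles this says $\rho_z$ is boundary-unipotent if and only if the $T$-valued class of Step 2 is a coboundary: an $N$-valued cocycle maps to the trivial $T$-cocycle, and conversely a $T$-coboundary correcting $C(z)$ lifts along the split surjection $B\to T$ to a $B$-coboundary conjugating the boundary cocycle into $N$. Thus $\rho_z$ is boundary-unipotent if and only if $C(z)=0$ in $H^1(\partial M;(\C^*)^{n-1})$.

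\textbf{Step 4 (triviality $\Leftrightarrow$ cusp equations).} As the coefficient group is abelian with trivial action, $C(z)\in H^1(\partial M;(\C^*)^{n-1})=\Hom\big(H_1(\partial M;\Z^{n-1}),\C^*\big)$ vanishes if and only if it kills a generating set; taking the generators $\alpha\otimes e_r$ of $H_1(\partial M;\Z^{n-1})$ this is exactly the assertion that $C(z)(\alpha\otimes e_r)=1$ for all $\alpha$ and all $r$, i.e.\ \eqref{eq:CuspEqLevelr}, i.e.\ that all cusp equations hold. Combining with Step 3 proves the theorem.

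\textbf{Main obstacle.} The only non-formal point is Step 2: verifying that the natural cocycle is Borel-valued along $\partial M$ and that its diagonal quotient is literally the cochain $C(z)$. This rests on the explicit flag-decoration description of the natural cocycle in~\cite{GaroufalidisGoernerZickert}; once that is granted, Steps 1, 3 and 4 are standard facts about nonabelian cohomology and conjugation into a maximal unipotent subgroup.
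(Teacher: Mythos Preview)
Your proposal is correct and matches the approach the paper indicates. Note that the paper does not actually prove this theorem: it is stated with a \qed and attributed to \cite{GaroufalidisGoernerZickert}, and the paragraph preceding it merely records that $C(z)$ is ``the ratio of consecutive diagonal entries in the natural cocycle'' from \cite[Section~13]{GaroufalidisGoernerZickert}, hence a class in $H^1(\partial M;(\C^*)^{n-1})$ whose vanishing on generators $\alpha\otimes e_r$ is exactly \eqref{eq:CuspEqLevelr}. Your Steps~1--4 unpack precisely this, with Step~2 (Borel-valuedness of the natural cocycle on $\partial M$ and identification of its diagonal quotient with $C(z)$) correctly isolated as the substantive input from the cited reference; Steps~3 and~4 are the standard reductions the paper also takes for granted.
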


The cusp equation~\eqref{eq:CuspEqLevelr} for $\alpha\otimes e_r$ can be 
written in the form
\begin{equation}
\prod_{s,\Delta}z_{s,\Delta}^{A^{\cusp}_{\alpha\otimes e_r,(s,\Delta)}}
\prod_{s,\Delta}(1-z_{s,\Delta})^{B^{\cusp}_{\alpha\otimes e_r,(s,\Delta)}}=\pm 1.
\end{equation}

\subsection{Linearizing the cusp equations}

Consider the map
\begin{equation}
\begin{gathered}
\delta^{\prime}\colon C_1(\T^{\hexagon}_{\partial M};\Z^{n-1})\to J^{\fg}(\T)\\
\gamma^{ijk}\otimes e_r\mapsto 
-\varepsilon^{ijk}_\circlearrowleft(r v_i+(n-r)v_j,\varepsilon_{ij}),\qquad 
\beta^{ijk}\otimes e_r\mapsto \varepsilon^{ijk}_\circlearrowleft
\sum_{\substack{t\in\text{face}(ijk)\\t_i=r}}\sum_{(s,e)=t}(s,e)
\end{gathered}
\end{equation} 
We wish to prove that $\delta^{\prime}$ induces a map in homology.
\begin{lemma}\label{lemma:Stokes}
For each $r=1,\dots,n-1$, we have
\begin{equation}\label{eq:Stokes}
\sum_{t\in\mathring{\Delta}^3_n(\Z),t_i=r}\beta(t)
=-\sum_{t\in\partial\Delta^3_n(\Z),t_i=r}\sum_{s+e=t}(s,e).
\end{equation}
\end{lemma}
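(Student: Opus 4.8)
The plan is to interpret \eqref{eq:Stokes} as a discrete Stokes theorem: the quantity $\sum_{s+e=t}(s,e) = \beta(t)$ for \emph{interior} integral points $t$ (by the definition of $\beta$, and as noted in Remark~\ref{rm:InteriorIgnore}), so the right-hand side is also expressible via $\beta$ applied to boundary points (with a sign), and the claim becomes that $\sum_{t_i=r}\beta(t)$, summed over \emph{all} integral points $t$ of $\Delta^3_n$ with $t_i=r$, is related to a quantity that vanishes or telescopes. More precisely, I would first rewrite both sides entirely in terms of the generators $(s,\varepsilon_{ab})_\Delta$ of $J^{\fg}(\T)$ using $\beta(t)=\sum_{e+s=t}(s,e)$ and the relations \eqref{eq:first}, \eqref{eq:second} in each $J_{\Delta^3_2}$, so that the identity reduces to a purely combinatorial bookkeeping statement about which pairs $(s,\varepsilon_{ab})$ occur with which multiplicity in the sum over the slice $\{t_i=r\}$.

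By the $A_4$-symmetry it suffices to treat one index, say $i=0$; so fix $r$ and consider the affine slice $\{t\in\Delta^3_n(\Z): t_0=r\}$, which is itself a scaled $2$-simplex with integral points of "size" $n-r$ in the coordinates $(t_1,t_2,t_3)$. The key step is: for a generator $(s,\varepsilon_{ab})_\Delta$ appearing in $\beta(t)$, one has $s = t-\varepsilon_{ab}$, so the coefficient of $(s,\varepsilon_{ab})$ in $\sum_{t_0=r}\beta(t)$ counts the number of $t$ in the slice with $t = s+\varepsilon_{ab}$, i.e. it is $1$ if $s_0 + (\varepsilon_{ab})_0 = r$ and $0$ otherwise. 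Splitting the three edge-types according to whether $0\in\{a,b\}$, I would show that the edges $\varepsilon_{0b}$ contribute only from subsimplices with $s_0=r-1$, the edges $\varepsilon_{ab}$ with $0\notin\{a,b\}$ contribute only from $s_0=r$, and then use the relations in $J_{\Delta^3_2}$ (opposite edges equal, three edges at a vertex sum to zero) to collapse the two families. The boundary points are exactly those $t$ in the slice lying on a $2$-face of $\Delta^3_n$, i.e. with some $t_c=0$ for $c\neq 0$; tracking which $(s,e)$ are "lost" when $t$ leaves the interior gives precisely the right-hand side with the minus sign.

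I expect the main obstacle to be organizing the signs and the edge-type casework cleanly — in particular verifying that after applying the relations $\varepsilon_{01}=\varepsilon_{23}$, $\varepsilon_{01}+\varepsilon_{12}+\varepsilon_{02}=0$ inside each $J_{\Delta^3_2}$, the "interior" contributions telescope so that what survives is supported exactly on subsimplices touching $\partial\Delta^3_n$. A clean way to package this, which I would adopt, is to observe that $t\mapsto \beta(t)$ extended $\Z$-linearly is a map $\Z[\Delta^3_n(\Z)]\to J_{\Delta^3_2}^{\oplus}$, that $\partial$ on the standard simplicial chain complex of $\Delta^3_n$ has a compatible lift, and that \eqref{eq:Stokes} is the statement $\beta\circ(\text{slice}_{i,r}) = -(\text{slice}_{i,r})\circ(\text{restrict to }\partial) \circ(\text{lift of }\partial)$ up to identifying interior $\beta(t)$ with its expansion; once phrased this way it follows from a single application of $\beta^*\circ\beta=0$ together with the fact that $\beta(t)$ for interior $t$ already lies in $\Im\beta$. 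Concretely, though, I anticipate it is fastest just to expand both sides in the $(s,\varepsilon_{ab})_\Delta$ basis and match coefficients, as sketched above.
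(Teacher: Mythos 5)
Your main argument is correct and is essentially the paper's proof: the paper slices $\Delta^3_n(\Z)$ at $t_i=r$ and groups the pairs $(s,e)$ with $(s+e)_i=r$ into triangles (the three edges at vertex $i$ of each subsimplex with $s_i=r-1$, and the three edges of the face opposite $i$ for each subsimplex with $s_i=r$), each summing to zero by \eqref{eq:first} and \eqref{eq:second}, so the full slice sum vanishes and the interior part equals minus the boundary part. Your coefficient-matching casework on $s_i\in\{r-1,r\}$ and on whether $i\in\{a,b\}$ is exactly the algebraic form of that picture; only your speculative repackaging via $\beta^*\circ\beta=0$ is off the mark, but you rightly discard it in favor of the direct expansion.
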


\begin{proof}
Consider a slice of $\Delta_n^3(\Z)$ consisting of integral points with 
$t_i=r$ as shown in Figure~\ref{fig:Stokes} for $n-r=4$. Each dot represents 
an integral point $t$ and each vertex of each triangle intersecting $t$ 
represents an edge $e$ of a subsimplex $s$ with $s+e=t$. By~\eqref{eq:first} 
and \eqref{eq:second} the sum of the vertices (regarded as pairs $(s,e)$) of 
each triangle is zero. Using this it easily follows that the sum of all 
interior edges equals minus the sum of the boundary edges. 
Figure~\ref{fig:Stokes} shows the proof for $n-r=4$.
\end{proof}

\begin{figure}[htb]
\begin{center}
\scalebox{0.45}{\input{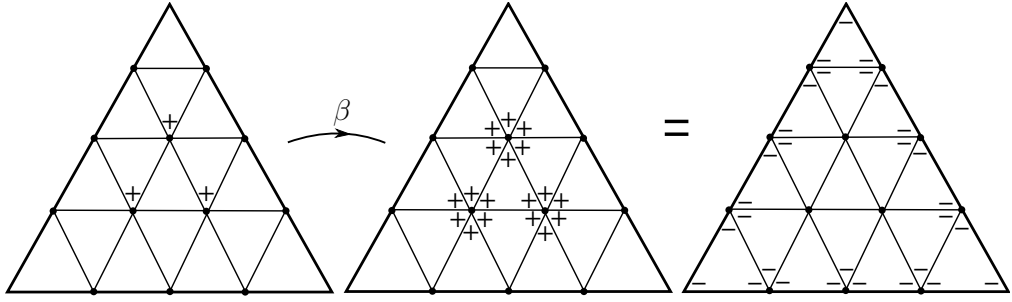}}
\caption{Proof of Lemma~\ref{lemma:Stokes} for $n-r=4$.}\label{fig:Stokes}
\end{center}
\end{figure}

\begin{proposition}
The map $\delta^{\prime}$ induces a map on homology.
\end{proposition}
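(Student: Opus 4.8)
The plan is to show that $\delta'$ is a chain map from $C_*(\T^{\hexagon}_{\partial M};\Z^{n-1})$ to the truncated complex $C^{\fg}_1(\T)\xrightarrow{\beta}J^{\fg}(\T)\xrightarrow{\beta^*}C^{\fg}_1(\T)$ placed in degrees $2,3,4$; that is, I would produce maps $\delta'_2$ and $\delta'_0$ fitting into a commutative ladder
\begin{equation}
\cxymatrix{{C_2(\T^{\hexagon}_{\partial M};\Z^{n-1})\ar[r]^-\partial\ar[d]^{\delta'_2}&C_1(\T^{\hexagon}_{\partial M};\Z^{n-1})\ar[r]^-\partial\ar[d]^{\delta'}&C_0(\T^{\hexagon}_{\partial M};\Z^{n-1})\ar[d]^{\delta'_0}\\
C^{\fg}_1(\T)\ar[r]^-\beta&J^{\fg}(\T)\ar[r]^-{\beta^*}&C^{\fg}_1(\T).}}
\end{equation}
Once this diagram commutes, $\delta'$ automatically takes cycles to cycles and boundaries to boundaries, hence induces $H_3(\T^{\hexagon}_{\partial M};\Z^{n-1})\to H_3(\J^{\fg})$, i.e. $H_1(\partial M;\Z^{n-1})\to H_3(\J^{\fg})$.

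\textbf{Commutativity of the right square.} I would compute $\beta^*\circ\delta'$ on the two types of generators. On $\gamma^{ijk}\otimes e_r$ this is $-\varepsilon^{ijk}_\circlearrowleft\,\beta^*(r v_i+(n-r)v_j,\varepsilon_{ij})$, which by the formula \eqref{eq:BetaStar} for $\beta^*$ is a single elementary quad relation; on $\beta^{ijk}\otimes e_r$ it is $\varepsilon^{ijk}_\circlearrowleft\sum_{t\in\mathrm{face}(ijk),\,t_i=r}\beta^*\big(\sum_{s+e=t}(s,e)\big)$, a sum of hexagon relations (Lemma~\ref{lemma:HexagonRelation}) supported on the face $ijk$. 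Summing over the three edges of a boundary hexagonal face $\tau^i$ (the cycle $\partial$ of $\tau^i\otimes e_r$ in $C_1(\T^{\hexagon}_{\partial M})$) and over a boundary polygonal face $p^{\{i_l,j_l\}}$, the quad- and hexagon-contributions must land in the image of $\beta^*$ and depend only on points lying on the two faces named by the endpoints of the relevant edges; this is exactly what defines $\delta'_0$ on the two vertex types $v^{ijk}$, by the same bookkeeping already used in the proof that $\delta$ descends (the proof of the Proposition preceding "Definition of $\gamma$"). The telescoping identity in Lemma~\ref{lemma:Stokes} is what guarantees that the interior $\beta$-terms produced along a hexagonal face match up with the boundary $(s,e)$-terms, so that $\beta^*\circ\delta'$ closes up as claimed.

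\textbf{Commutativity of the left square.} Here I would define $\delta'_2$ on a hexagonal face $\tau^i\otimes e_r$ and a polygonal face $p^{\{i_l,j_l\}}\otimes e_r$ by the natural "fill in the slice" formula — for $\tau^i$, essentially $\sum_{t_i=r,\,t\in\Delta}(\text{an interior point of }\T)$ with suitable multiplicity, and for $p$ a sum over the simplices meeting the polygonal face — and verify $\beta\circ\delta'_2=\delta'\circ\partial$. For $\partial(\tau^i\otimes e_r)=\sum_{j\neq i}(\gamma^{ij\ast}+\beta^{\ast})$-type terms this reduces, after applying Lemma~\ref{lemma:Stokes} to convert the sum of boundary $(s,e)$-terms into a sum of $\beta(t)$ over interior points with $t_i=r$, to the identity $\sum_{j\neq i}t_j=n-t_i$ used already in the proof for $\delta$; the polygonal case is the routine analogue, with the signs $\varepsilon^{ijk}_\circlearrowleft$ tracking orientations as in Section~\ref{sec:LabelingConventions}.

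\textbf{Main obstacle.} The bulk of the work — and the only genuinely delicate point — is the right-hand square: one must check that the sum of elementary quad relations coming from the $\gamma$-edges and the sum of hexagon relations coming from the $\beta$-edges of a face of $\T^{\hexagon}_{\partial M}$ cancel except for boundary terms supported on the two named faces, so that a well-defined $\delta'_0$ exists. This is where Lemma~\ref{lemma:Stokes} is essential and where the sign conventions $\varepsilon^{ijk}_\circlearrowleft$ must be handled with care; everything else is the same kind of counting already carried out for $\delta$.
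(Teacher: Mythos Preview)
Your overall architecture matches the paper's proof exactly: build the ladder with explicit $\delta'_2$ and $\delta'_0$, then check both squares. The paper takes $\delta'_2(\tau^i\otimes e_r)=-\sum_{t\in\mathring\Delta^3_n(\Z),\,t_i=r}t$ and $\delta'_2(p^{\{i_l,j_l\}}\otimes e_r)$ equal to the edge point $rv_{i_l}+(n-r)v_{j_l}$, and defines $\delta'_0(v^{ijk}\otimes e_r)$ as an explicit signed sum of four integral points near the edge $ij$.

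Two points where your sketch drifts from what actually works:

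\emph{Lemma~\ref{lemma:Stokes} belongs to the left square, not the right.} For the hexagonal face $\tau^i$, one has $\beta\circ\delta'_2(\tau^i\otimes e_r)=-\sum_{t\text{ interior},\,t_i=r}\beta(t)$, and Lemma~\ref{lemma:Stokes} converts this into the sum of $(s,e)$ over all boundary points with $t_i=r$; those boundary points are precisely the three edge points picked out by the $\gamma$-edges and the face points picked out by the $\beta$-edges of $\partial\tau^i$, so the left square closes directly. The identity $\sum_{j\neq i}t_j=n-t_i$ from the $\delta$ proof plays no role here. Your invocation of Stokes in the right-square paragraph is misplaced: nothing there telescopes.

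\emph{The right square is checked edge-by-edge, not by summing over $\partial\tau^i$.} Summing $\beta^*\circ\delta'$ over the boundary of a $2$-cell is just the consistency condition $\beta^*\beta=0$ and does not determine $\delta'_0$. What the paper does is write down $\delta'_0(v^{ijk}\otimes e_r)$ explicitly and then verify, for each edge separately, that $\beta^*\circ\delta'(\beta^{ijk}\otimes e_r)$ equals $\delta'_0(v^{ikj}\otimes e_r)-\delta'_0(v^{ijk}\otimes e_r)$ (this is the long hexagon relation of Figure~\ref{fig:MuZero}), and that $\beta^*\circ\delta'(\gamma^{ijk}\otimes e_r)$ equals $\delta'_0\circ\partial(\gamma^{ijk}\otimes e_r)$ (the four edge terms in the latter cancel in pairs, leaving exactly the elementary quad relation). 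Your identification of the $\gamma$-contribution as a single quad relation and the $\beta$-contribution as a sum of hexagon relations is correct; you just need to organize the check per edge rather than per face.
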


\begin{proof}
We wish to extend $\delta^{\prime}$ to a commutative diagram
\begin{equation}
\cxymatrix{{C_2(\T^{\hexagon}_{\partial M};\Z^{n-1})\ar[r]^-
\partial\ar[d]^{\delta^{\prime}_2}&C_1(\T^{\hexagon}_{\partial M};
\Z^{n-1})\ar[r]^-\partial\ar[d]^\delta^{\prime}
&C_0(\T^{\hexagon}_{\partial M};\Z^{n-1})\ar[d]^{\delta^{\prime}_0}\\
C^{\fg}_1(\T)\ar[r]^-\beta&J^{\fg}(\T)\ar[r]^-{\beta^*}&C^{\fg}_1(\T).}}
\end{equation}

Define 
\begin{equation}
\begin{gathered}
\delta^{\prime}_2\colon C_2(\T^{\hexagon}_{\partial M};\Z^{n-1})\to C_1^{\fg}(\T)\\
\tau^i\otimes e_r\mapsto-\sum_{\substack{t\in\mathring{\Delta}^3_n(\Z),t_i=r}}t,
\qquad p^{i_lj_l}\mapsto\{(rv_{i_l}+(n-r)v_{j_l},\Delta_l)\}
\end{gathered}
\end{equation}
and 
\begin{equation}
\begin{gathered}
\delta^{\prime}_0\colon C_0(\T^{\hexagon}_{\partial M};\Z^{n-1})\to C_1^{\fg}(\T),\\
v^{ijk}\otimes e_r\mapsto \big((r+1)v_i+(n-r-1)v_j\big)-\big(rv_i+(n-r)v_j\big)
\\+\big((r-1)v_i+v_k+(n-r)v_j\big)-\big(rv_i+v_k+(n-r-1)v_j\big)
\end{gathered}
\end{equation}
The fact that $\beta\circ\delta^{\prime}_2(p^{i_lj_l}\otimes_r)
=\delta^{\prime}\circ\partial(p^{i_lj_l}\otimes e_r)$ is immediate, and the 
fact that $\beta\circ\delta^{\prime}_2(\tau^i\otimes e_r)=\delta^{\prime}
\circ\partial(\tau^i\otimes e_r)$ follows from Lemma~\ref{lemma:Stokes}. 
The terms involved in $\delta^{\prime}_0\circ\partial(\beta^{ijk}\otimes e_r)$ 
are the ones involved in the long hexagon relation shown in 
Figure~\ref{fig:MuZero}, and exactly corresponds to 
$\beta^*\circ\delta^{\prime}(\beta^{ijk}\otimes e_r)$. Finally, the equality 
$\beta^*\circ\delta^{\prime}(\gamma^{ijk}\otimes e_r)
=\delta^{\prime}_0\circ\partial(\gamma^{ijk}\otimes e_r)$ follows from the 
fact that the four edge terms of 
$\delta^{\prime}_0\circ\partial(\gamma^{ijk}\otimes e_r)$ cancel out, and the 
remaining $4$ terms are exactly those of 
$\beta^*\circ\delta^{\prime}(\gamma^{ijk}\otimes e_r)$.
\end{proof}

\begin{figure}[htb]
\begin{center}
\scalebox{0.85}{\input{MuZero.tex}}
\caption{$\delta^{\prime}_0\circ\partial(\beta^{120})$.}
\label{fig:MuZero}
\end{center}
\end{figure}

Let $z$ be a shape assignment on $\T$. Since 
$z_{s,\Delta}^{1100}z_{s,\Delta}^{0110}z_{s,\Delta}^{1010}=-1$ for each subsimplex 
$s$ of each simplex $\Delta$ of $\T$, it follows that $z$ defines an element 
$z\in \Hom(J^{\fg}(\T);\C^*\big/\{\pm 1\})$, and since the gluing equations are 
satisfied, we obtain an element $z\in H^3(\J^{\fg};\C^*\big/\{\pm 1\})$.

Dual to $\delta^{\prime}$ we have $\delta^{\prime*}\colon H^3(\J^{\fg};\C^*)\to 
H^1(\partial M;(\C^*)^{n-1})$. The following follows immediately from the 
definitions. 

\begin{proposition}
We have $\delta^{\prime*}(z)=C(z)\in H^1(\partial M;(\C^*\big/\{\pm 1\})^{n-1})$.
\qed
\end{proposition}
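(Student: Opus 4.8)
The plan is to verify the asserted equality at the cochain level. Recall that a shape assignment $z$ defines the homomorphism $z\colon J^{\fg}(\T)\to\C^*\big/\{\pm 1\}$, $(s,e)_\Delta\mapsto z^e_{s,\Delta}$; the relation $z^{1100}_sz^{0110}_sz^{1010}_s=-1$ on each subsimplex together with the generalized gluing equations forces $z\circ\beta=0$ in $\C^*\big/\{\pm 1\}$, so $z$ is a $3$-cocycle of $\J^{\fg}$ representing the class stated just before the proposition. Since $\delta^{\prime}$ is a chain map from $C_*(\T^{\hexagon}_{\partial M};\Z^{n-1})$ to $\J^{\fg}$ (the preceding proposition), $\delta^{\prime*}(z)=z\circ\delta^{\prime}$ is a $1$-cocycle on $\T^{\hexagon}_{\partial M}$ with coefficients in $(\C^*\big/\{\pm 1\})^{n-1}$, representing $\delta^{\prime*}$ of the class of $z$. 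It therefore suffices to check that $z\circ\delta^{\prime}$ agrees with the cocycle $C(z)$ on the two families of generators $\gamma^{ijk}\otimes e_r$ and $\beta^{ijk}\otimes e_r$ of $C_1(\T^{\hexagon}_{\partial M};\Z^{n-1})$.

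For the first family, unwind the definition of $\delta^{\prime}$: with $s$ denoting the subsimplex whose $\varepsilon_{ij}$-edge has midpoint $rv_i+(n-r)v_j$, that is $s=(r-1)v_i+(n-r-1)v_j$, we have $\delta^{\prime}(\gamma^{ijk}\otimes e_r)=-\varepsilon^{ijk}_\circlearrowleft\,(s,\varepsilon_{ij})$, so
\[
(z\circ\delta^{\prime})(\gamma^{ijk}\otimes e_r)=\big(z^{\varepsilon_{ij}}_{(r-1)v_i+(n-r-1)v_j}\big)^{-\varepsilon^{ijk}_\circlearrowleft},
\]
which is exactly $C(z)(\gamma^{ijk}\otimes e_r)$. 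For the second family, $\delta^{\prime}(\beta^{ijk}\otimes e_r)=\varepsilon^{ijk}_\circlearrowleft\sum_t\sum_{s+e=t}(s,e)$, the outer sum over the integral points $t$ of $\mathrm{face}(ijk)$ with $t_i=r$; applying $z$ and using the defining formula $X_t=-\prod_{s+e=t}z^e_s$ gives
\[
(z\circ\delta^{\prime})(\beta^{ijk}\otimes e_r)=\Big(\prod_t\prod_{s+e=t}z^e_s\Big)^{\varepsilon^{ijk}_\circlearrowleft}=\Big(\prod_t(-X_t)\Big)^{\varepsilon^{ijk}_\circlearrowleft}=\Big(\prod_t X_t\Big)^{\varepsilon^{ijk}_\circlearrowleft}
\]
in $\C^*\big/\{\pm 1\}$, the overall sign being absorbed. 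This is precisely $C(z)(\beta^{ijk}\otimes e_r)$. Hence $z\circ\delta^{\prime}$ and $C(z)$ coincide as cochains, and a fortiori as cohomology classes, which proves the proposition.

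The computation is entirely mechanical. The only points that need care are the notational translation between the integral point $rv_i+(n-r)v_j$ and the subsimplex $s=(r-1)v_i+(n-r-1)v_j$ carrying it as the midpoint of its $\varepsilon_{ij}$-edge, the bookkeeping of the orientation signs $\varepsilon^{ijk}_\circlearrowleft$, and the observation that working modulo $\pm 1$ is exactly what makes $z$ a genuine cocycle of $\J^{\fg}$ and simultaneously absorbs the sign in $X_t=-\prod_{s+e=t}z^e_s$. There is no substantive obstacle.
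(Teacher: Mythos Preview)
Your proof is correct and is exactly the unwinding of definitions that the paper intends: the paper states the proposition with a \qed\ after remarking that it ``follows immediately from the definitions,'' and your computation on the generators $\gamma^{ijk}\otimes e_r$ and $\beta^{ijk}\otimes e_r$ is precisely that verification made explicit. Your care in translating the edge point $rv_i+(n-r)v_j$ into the subsimplex $(r-1)v_i+(n-r-1)v_j$ and in noting that $\C^*/\{\pm 1\}$ absorbs the sign in $X_t=-\prod_{s+e=t}z^e_s$ fills in the only points the paper leaves implicit.
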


In particular, 
\begin{equation}
\delta^{\prime^*}(z)=\prod_{s,\Delta}z_{s,\Delta}^{A_{\alpha,(s,\Delta)}}
\prod_{s,\Delta}(1-z_{s,\Delta})^{B_{\alpha,(s,\Delta)}}.
\end{equation}

For any abelian group $A$, we shall use the canonical identifications
\begin{equation}
\Hom(H_1(\partial M;\Z^{n-1}),A)\cong\big(\Hom(H_1(\partial M),A)\big)^{n-1}
\cong H^1(\partial M;A^{n-1}).
\end{equation}
If $\phi$ is an element of $\Hom(H_1(\partial M;\Z^{n-1}),A)$ or 
$H^1(\partial M;A^{n-1})$, we let $\phi_r\colon H_1(\partial M)\to A$ 
denote the $r$th coordinate function. 

\begin{proposition}
\label{prop:MuAndDelta}
We have 
\begin{equation}
\delta=\delta^{\prime}\circ (\id\otimes D)\in 
\Hom\big(H_1(\partial M;\Z^{n-1}),H_3(\J^{\fg})\big),\qquad 
D=\diag(\{n-i\}_{i=1}^{n-1}).
\end{equation}
Equivalently, $\delta_r=(n-r)\delta^{\prime}_r$ for all $r=1,\dots,n-1$.
\end{proposition}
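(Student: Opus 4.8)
The plan is to prove the equivalent statement $\delta_r=(n-r)\,\delta'_r$ as homomorphisms $H_1(\partial M)\to H_3(\J^{\fg})$ for each fixed $r\in\{1,\dots,n-1\}$; the stated identity with $D=\diag(\{n-i\})$ then follows, since $D$ acts only on the $\Z^{n-1}$-slot and $H_1(\partial M;\Z^{n-1})=H_1(\partial M)\otimes\Z^{n-1}$. Since both $\delta$ and $\delta'$ are already known to descend to homology, it suffices to evaluate them on $1$-cycles and compare in $H_3(\J^{\fg})=\Ker(\beta^*)/\Im(\beta)$. The key geometric observation is that $\T^{\pentagon}_{\partial M}$ and $\T^{\hexagon}_{\partial M}$ have the \emph{same} polygonal $2$-faces $p^{\{i_l,j_l\}}$ and differ only in that each triangular face $\tau^i_\Delta$ of $\T^{\pentagon}_{\partial M}$ is obtained from the hexagonal face $\tau^i_\Delta$ of $\T^{\hexagon}_{\partial M}$ by collapsing its three truncated corners. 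I would therefore fix a chain map $\Theta\colon C_*(\T^{\pentagon}_{\partial M})\to C_*(\T^{\hexagon}_{\partial M})$ realizing this comparison and inducing the identity on $H_*(\partial M)$; on $1$-cells it is the identity on edges bounding polygonal faces and sends each perpendicular edge $e^{ij}_\Delta$ to the boundary path of the hexagonal face replacing it, i.e.\ an explicit signed sum of a $\gamma$-edge and a $\beta$-edge of $\T^{\hexagon}_{\partial M}$, with signs dictated by the orientation conventions of Section~\ref{sec:LabelingConventions}. The claim then becomes: for every $1$-cycle $c$ in $C_1(\T^{\pentagon}_{\partial M};\Z^{n-1})$ supported in the $r$-th slot, $\delta(c)\equiv(n-r)\,\delta'(\Theta(c))\pmod{\Im(\beta)}$.

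Next I would localize this to a single simplex. Both $\delta$ and $(n-r)\,\delta'\circ\Theta$ are chain maps into $(\cdots\to C^{\fg}_1(\T)\xrightarrow{\beta}J^{\fg}(\T)\xrightarrow{\beta^*}C^{\fg}_1(\T)\to\cdots)$ fitting into commutative ladders over the boundary maps of $\partial M$ (the auxiliary maps being the $\delta_i$, $\delta^{\prime}_i$ and the subdivision underlying $\Theta$); hence their difference vanishes on cycles modulo $\Im(\beta)$ as soon as, for each perpendicular edge, the element $\delta(e^{ij}_\Delta\otimes e_r)-(n-r)\,\delta'(\Theta(e^{ij}_\Delta)\otimes e_r)$ is a sum of hexagon and long hexagon relations (which lie in $\Im(\beta)$) plus terms that telescope to zero around a closed path. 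For this local computation I would use the rewriting of $\delta$ proved just above, $\delta(e^{ij}_\Delta\otimes e_r)=\sum_{s_i=r-1}(s,\varepsilon_{ij})_\Delta-\sum_{s_i=r}(s,\varepsilon_{kl})_\Delta$ with $\{i,j,k,l\}=\{0,1,2,3\}$, together with the definition of $\delta'$ on $\gamma$-edges (which contributes, up to the sign $\varepsilon^{ijk}_{\circlearrowleft}$, a single subsimplex edge $(s,\varepsilon_{ij})$ concentrated on the edge $ij$ of $\Delta$) and on $\beta$-edges (a sum of $\beta(t)$-type terms over a face slice). By Remark~\ref{rm:InteriorIgnore} and the Stokes-type Lemma~\ref{lemma:Stokes}, the $\beta$-contribution is a sum of $\beta(t)$'s over a face slice, hence lies in $\Im(\beta)$; modulo $\Im(\beta)$ it is precisely what converts the single subsimplex edge produced by the $\gamma$-term into the full slice $\{s_i=r-1\}$ (resp.\ $\{s_i=r\}$) appearing in $\delta$. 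The multiplicity $n-r$ then drops out of a telescoping count in the same spirit as \eqref{eq:Near}--\eqref{eq:Far}: each slice $\{s_i=c\}$ contains $\tfrac12(n-c)(n-c-1)$ subsimplices, and summing the $\beta$-corrections over the $n-r$ slices that occur collapses to the factor $n-r$.

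I expect the main obstacle to be the sign and orientation bookkeeping rather than any conceptual difficulty: one must reconcile the signs $\varepsilon^{ijk}_{\circlearrowleft}$ in the definition of $\delta'$ with the clockwise/counter-clockwise conventions of Section~\ref{sec:LabelingConventions} and with the signs implicit in $\Theta$, and then verify that the per-edge discrepancy is \emph{genuinely} a combination of long hexagon relations (as in Figure~\ref{fig:MuZero}) and not merely of quad relations — it is exactly this that forces the coefficient $n-r$ rather than, say, $(n-r)^2$. A convenient way to organize the verification, and the one I would adopt, is to first check the identity on the canonical normal curves generating $H_1(\partial M)$ — those encircling a cusp near a fixed edge of $\T$ at level $r$ — where $\delta$ and $\delta'$ admit manifestly parallel slice-by-slice descriptions, and then invoke that such curves generate to conclude in general.
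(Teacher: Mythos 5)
Your proposal is correct and follows essentially the same route as the paper: the paper likewise represents classes of $H_1(\partial M)$ by normal curves built from left and right turns, translates them into $C_1(\T^{\hexagon}_{\partial M})$ by sending a left turn to a $\gamma$-edge and a right turn to a concatenation $\beta\gamma\beta$ (your map $\Theta$), and compares the contributions of $\delta_r$ and $(n-r)\delta'_r$ turn by turn, using Remark~\ref{rm:InteriorIgnore}, Lemma~\ref{lemma:Stokes}, and the fact that terms on two paired faces differ by an element of $\Im(\beta)$, with the factor $n-r$ coming from the telescoping slice count exactly as you describe. The only caveat is your parenthetical claim that a single $\beta$-edge's contribution already lies in $\Im(\beta)$ — by Lemma~\ref{lemma:Stokes} it is only the sum over \emph{all} boundary faces of a slice that does — but this does not affect the argument you actually run.
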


\begin{proof}
We prove the second statement. Every class in $H_1(\partial M)$ can be 
represented by a curve $\alpha$ which is a sequence of left and right 
turns as shown in Figure~\ref{fig:LeftRightCurve}. We can represent 
$\alpha$ in $C_1(\T^{\pentagon}_{\partial M})$ and $C_1(\T^{\hexagon}_{\partial M})$ 
as shown in Figure~\ref{fig:RepresentGamma}. Namely, the representation in 
$C_1(\T^{\pentagon}_{\partial M})$ is the natural one, and the representation 
in $C_1(\T^{\hexagon}_{\partial M})$ is obtained by replacing a left turn by 
its corresponding $\gamma$ edge, and a right turn by a concatenation 
$\beta\gamma\beta$.
The contribution to $\delta^{\prime}_r(\alpha)$ and $\delta_r(\alpha)$ from 
a left and right turn are shown schematically in Figures~\ref{fig:LeftDelta} 
and \ref{fig:RightDelta} (the interior points are ignored, 
c.f.~Remark~\ref{rm:InteriorIgnore}). Each dot represents an integral point 
$t$, contributing the terms $\sum_{s+e=t}(s,e)$. We wish to prove that 
$\delta_r(\alpha)=(n-r)\delta^{\prime}_r(\alpha)$, whenever $\alpha$ is a 
cycle. This can be seen by inspecting Figures~\ref{fig:LeftRight} and 
\ref{fig:RightRight}; recall that when two faces are paired the terms on 
each side differ by an element in the image of $\beta$.
\end{proof}

\begin{figure}[htb]
\begin{center}
\begin{minipage}[b]{0.47\textwidth}
\begin{center}
\scalebox{0.6}{\input{LeftRightCurve.tex}}
\end{center}
\end{minipage}
\hfill
\begin{minipage}[b]{0.47\textwidth}
\begin{center}
\scalebox{0.6}{\input{RepresentingGamma.tex}}
\end{center}
\end{minipage}\\
\begin{minipage}[t]{0.47\textwidth}
\begin{center}
\caption{Left and right turns.}\label{fig:LeftRightCurve}
\end{center}
\end{minipage}
\hfill
\begin{minipage}[t]{0.47\textwidth}
\begin{center}
\caption{Representing a curve in $C_1(\T^{\pentagon}_{\partial M})$ and 
$C_1(\T^{\hexagon}_{\partial M})$.}
\label{fig:RepresentGamma}
\end{center}
\end{minipage}
\end{center}
\end{figure}

\begin{figure}[htb]
\begin{center}
\begin{minipage}[b]{0.47\textwidth}
\begin{center}
\scalebox{0.39}{\input{LeftDelta.tex}}
\end{center}
\end{minipage}
\hfill
\begin{minipage}[b]{0.47\textwidth}
\begin{center}
\scalebox{0.39}{\input{RightDelta.tex}}
\end{center}
\end{minipage}\\
\begin{minipage}[t]{0.47\textwidth}
\begin{center}
\caption{$\delta$ and $\delta^{\prime}$ for a left turn.}
\label{fig:LeftDelta}
\end{center}
\end{minipage}
\hfill
\begin{minipage}[t]{0.47\textwidth}
\begin{center}
\caption{$\delta$ and $\delta^{\prime}$ for a right turn.}
\label{fig:RightDelta}
\end{center}
\end{minipage}
\end{center}
\end{figure}

\begin{figure}[htb]
\begin{center}
\begin{minipage}[b]{0.47\textwidth}
\begin{center}
\scalebox{0.6}{\input{LeftRight.tex}}
\end{center}
\end{minipage}
\hfill
\begin{minipage}[b]{0.47\textwidth}
\begin{center}
\scalebox{0.6}{\input{RightRight.tex}}
\end{center}
\end{minipage}\\
\begin{minipage}[t]{0.47\textwidth}
\begin{center}
\caption{$\delta$ and $\delta^{\prime}$ for a left turn followed by a 
right turn.}
\label{fig:LeftRight}
\end{center}
\end{minipage}
\hfill
\begin{minipage}[t]{0.47\textwidth}
\begin{center}
\caption{$\delta$ and $\delta^{\prime}$ for two right turns.}
\label{fig:RightRight}
\end{center}
\end{minipage}
\end{center}
\end{figure}

\subsection{Proof of Corollaries~\ref{cor:PoissonCommuteGeneral} 
and \ref{cor:RankGeneral}}

By comparing the generalized gluing equation~\eqref{eq:DefGeneralizedGluingEq} 
with the definition~\eqref{eq:DefBeta} of $\beta$ we obtain that 
\begin{equation}
\beta(p)=\sum_{(s,\Delta)} A_{p,(s,\Delta)}(s,\varepsilon_{01})
+\sum_{s,\Delta} B_{p,(s,\Delta)}(s,\varepsilon_{12}).
\end{equation}
Also, by definition of $\delta^{\prime}$, we have
\begin{equation}
\delta^{\prime}(p)=
\sum_{(s,\Delta)} A^{\cusp}_{\alpha\otimes e_r,(s,\Delta)}(s,\varepsilon_{01})
+\sum_{s,\Delta} B^{\cusp}_{\alpha\otimes e_r,(s,\Delta)}(s,\varepsilon_{12}),
\end{equation}
and is in $\Ker(\beta^*)$. Since $\beta^*\circ\beta=0$, $\Ker(\beta^*)$ is 
orthogonal to $\Im(\beta)$ proving the first statement of 
Corollary~\ref{cor:PoissonCommuteGeneral}. The second statement follows 
from~\eqref{eq:OmegaAComputation}. Finally, 
Corollary~\ref{cor:RankGeneral} follows immediately from the fact that 
$H_4(\J^{\fg})$ is zero modulo torsion.

\subsection*{Acknowledgment} The authors wish to thank Matthias 
Goerner and Walter Neumann for useful conversations.


\bibliographystyle{plain}
\bibliography{BibFile}

\end{document}